\documentclass[a4paper,11pt]{article}

\usepackage[T1]{fontenc}
\usepackage{lmodern}
\usepackage[
    top=1.8cm,
    bottom=2.2cm,
    left=2.4cm,
    right=2.4cm
]{geometry}

\usepackage{amsmath,amssymb,amsthm}
\usepackage{cite}
\usepackage{enumitem}
\usepackage{array}
\usepackage{tabularx}
\usepackage{xcolor}
\usepackage{float}

\usepackage{hyperref}

\hypersetup{
    colorlinks=true,
    linkcolor=blue,
    citecolor=blue,
    urlcolor=blue,
    pdftitle={
        Two-point modulus for the logarithmic p-flux of the first Dirichlet eigenfunction of the p-Laplacian
    },
    pdfauthor={
        Rui Chen
    }
}

\numberwithin{equation}{section}

\theoremstyle{plain}
\newtheorem{theorem}{Theorem}[section]
\newtheorem{lemma}[theorem]{Lemma}
\newtheorem{proposition}[theorem]{Proposition}
\newtheorem{corollary}[theorem]{Corollary}

\theoremstyle{definition}

\newtheorem{notation}{Notation}[section]

\newcommand{\R}{\mathbb{R}}

\renewcommand{\phi}{\varphi}

\newcolumntype{Y}{>{\centering\arraybackslash}X}

\allowdisplaybreaks

\begin{document}

\begin{center}
\vspace*{-0.7cm}

{\large\bfseries
Two-point estimates for the logarithmic \(p\)-flux of the first Dirichlet \(p\)-eigenfunction
\par}

\vspace{0.7cm}

{\normalsize

Rui Chen

\par}
\end{center}

\vspace{0.1cm}


\begin{abstract}
Let \(u>0\) be the first Dirichlet \(p\)-eigenfunction on a bounded convex domain \(\Omega\subset\mathbb R^N\), set
\[
X_\Omega:=|\nabla\log u|^{p-2}\nabla\log u.
\]
We study whether the sharp one-dimensional two-point modulus for \(X_\Omega\), which for \(p=2\) reduces to the logarithmic-gradient estimate of Andrews and Clutterbuck, persists for \(p\neq2\). We prove sharp estimates on intervals and balls for every \(p>1\), with the radial modulus on balls strictly larger than the one-dimensional one. In dimensions \(N\ge2\), however, the one-dimensional modulus fails on general convex domains for every \(p\neq2\). For \(1<p<2\), at every sufficiently small fixed scale there are smooth uniformly convex domains for which the corresponding two-point flux tends to zero. For \(p>2\), on thin domains \(\Omega_\varepsilon=D\times(-\varepsilon,\varepsilon)\), with \(D\subset\mathbb R^{N-1}\) bounded and convex, the normalized first eigenfunctions satisfy
\[
u_\varepsilon(x,\varepsilon z)\longrightarrow
\frac{\phi(z)}{\phi(0)}
\left(\frac{G_D(x)}{G_D(x_0)}\right)^{2/p}
\]
locally uniformly in \(D\times(-1,1)\), where \(\phi\) and \(G_D\) are the first Dirichlet eigenfunctions of the \(p\)-Laplacian on \((-1,1)\) and of the Laplacian on \(D\), respectively. This yields the failure for \(p>2\). Finally, for arbitrary \(C^2\) functions, positivity of the symmetric differential of the \(p\)-gradient implies convexity, and the converse holds universally if and only if \(p=2\).
\end{abstract}

\medskip

\noindent\textbf{Keywords:} \(p\)-Laplacian, logarithmic \(p\)-flux, two-point estimates, thin-domain asymptotics.

\medskip

\noindent\textbf{2020 Mathematics Subject Classification:}
Primary 35P30; Secondary 35J92.

\bigskip

\tableofcontents

\bigskip


\section{Introduction and Main Results}
\label{sec:introduction}

In this paper, we study logarithmic concavity and two-point estimates for the positive first Dirichlet eigenfunction of the \(p\)-Laplacian on convex domains. Let \(1<p<\infty\) and let \(\Omega\subset\R^N\) be a bounded domain. We consider
\begin{equation}\label{eq:mainequation}
\begin{cases}
-\Delta_p u
=
\lambda_{1,p}(\Omega)u^{p-1}
& \text{in }\Omega,\\
u=0
& \text{on }\partial\Omega,
\end{cases}
\qquad
\Delta_p u
:=
\operatorname{div}\!\left(|\nabla u|^{p-2}\nabla u\right),
\end{equation}
where
\begin{equation}\label{eq:intro-lambda1}
\lambda_{1,p}(\Omega)
:=
\inf_{u\in W_0^{1,p}(\Omega)\setminus\{0\}}
\frac{\displaystyle\int_\Omega |\nabla u|^p\,dx}
{\displaystyle\int_\Omega |u|^p\,dx}
\end{equation}
is the first Dirichlet eigenvalue. A nonnegative minimizer in
\eqref{eq:intro-lambda1} may be chosen strictly positive in \(\Omega\)
and is unique up to multiplication by a constant. The simplicity and
positivity of the first eigenfunction are classical; see, for instance,
Anane~\cite{Anane1987}, Lindqvist~\cite{Lindqvist1990}, and Otani and
Teshima~\cite{otani-teshima}. For balls, radial symmetry of the first
eigenfunction was proved by Bhattacharya~\cite[Theorem~1]{Bhattacharya1988}.

\smallskip

Convexity properties of positive first eigenfunctions have been studied
since the work of Brascamp and Lieb~\cite{BrascampLieb1976}, who proved
that the first Dirichlet eigenfunction of the Laplacian is log-concave
on convex domains. Related PDE methods and concavity principles were
developed by Caffarelli and Spruck~\cite{CaffarelliSpruck1982},
Korevaar~\cite{Korevaar1983}, and Caffarelli and
Friedman~\cite{CaffarelliFriedman1985}. For the \(p\)-Laplacian,
Sakaguchi~\cite{Sakaguchi1987} proved the log-concavity of the positive
first Dirichlet eigenfunction on smooth convex domains. This result was
later extended to an anisotropic setting by Kawohl and
Novaga~\cite{KawohlNovaga2008}. More general concavity results, after
suitable transformations of the solution, for quasilinear
\(p\)-Laplace equations were obtained by Borrelli, Mosconi, and
Squassina~\cite{BorrelliMosconiSquassina2024}. Crasta and Fragal\`a~\cite{CrastaFragala2020} studied the existence and
log-concavity of positive principal eigenfunctions for a class of
fully nonlinear homogeneous elliptic operators that includes the
\(p\)-Laplacian.

In the linear case, the qualitative log-concavity theorem admits a much
sharper form. Andrews and Clutterbuck~\cite{AndrewsClutterbuck2011}
proved a sharp two-point modulus of concavity for \(\log u\) on convex
domains, and used it to prove the fundamental gap conjecture. Their
estimate compares the higher-dimensional first eigenfunction with the
one-dimensional eigenfunction on an interval of the same diameter.
Related modulus estimates for vector fields satisfying nonlinear
equations were developed by Ni~\cite{Ni2013}. More recently, log-concavity and related Brunn--Minkowski inequalities
have been studied for weighted \(p\)-Laplace operators by
Qin~\cite{Qin2024}, and for the Gaussian \(p\)-operator by Colesanti,
Qin, and Salani~\cite{ColesantiQinSalani2026}. Bryan, Clutterbuck, and
Rankin~\cite{BryanClutterbuckRankin2026} recently gave a new proof of
the log-concavity of the first eigenfunction based on a sup-convolution
argument, deriving it as a consequence of a Brunn--Minkowski inequality
for the first Dirichlet eigenvalue.

\smallskip

For \(p\neq2\), the logarithmic gradient itself is no longer the quantity naturally associated with the \(p\)-Laplacian. Since
\[
|\nabla u|^{p-2}\nabla u
=
u^{p-1}
|\nabla\log u|^{p-2}\nabla\log u,
\]
we introduce the logarithmic \(p\)-flux $X_\Omega
:=
|\nabla\log u|^{p-2}\nabla\log u.$
Indeed, dividing \eqref{eq:mainequation} by \(u^{p-1}\) gives
\[
\operatorname{div}X_\Omega
+
(p-1)|\nabla\log u|^p
=
-\lambda_{1,p}(\Omega).
\]
Thus \(X_\Omega\) is the nonlinear analogue of
\(\nabla\log u\) in the linear case. When \(p=2\), $X_\Omega=\nabla\log u,$
which is precisely the quantity appearing in the
Andrews--Clutterbuck modulus of concavity. This motivates asking whether,
for \(p\neq2\), \(X_\Omega\) satisfies an analogous sharp two-point
estimate determined by the one-dimensional \(p\)-eigenfunction.

\smallskip

We first give an affirmative answer in dimension one. More precisely, we establish the sharp one-dimensional two-point modulus estimate for every $1<p<\infty$. We begin by fixing the notation for the one-dimensional model.

\begin{notation}
\label{not:one-dimensional-model}
Let \(I:=(-R,R)\), and let \(u_I>0\) be the first eigenfunction of
\eqref{eq:mainequation} on \(I\), with corresponding eigenvalue
\(\lambda_{1,p}(I)\), normalized by \(u_I(0)=1\). Then \(u_I\) is even,
\(u_I(\pm R)=0\), \(u_I'(0)=0\), and \(u_I'(x)<0\) for \(0<x<R\). Writing \(u_I(x)=U_I(r)\), where \(r=|x|\), define
\[
q_I(r):=-\frac{U_I'(r)}{U_I(r)},
\qquad
Q_I(r):=q_I(r)^{p-1},
\quad 0\le r<R.
\]
Then \(q_I(0)=Q_I(0)=0\), while \(q_I,Q_I>0\) on \((0,R)\). Setting
\(p'=p/(p-1)\), we have
\begin{equation}\label{eq:QI-ode-main} 
    Q_I'(r)
=
\lambda_{1,p}(I)+(p-1)Q_I(r)^{p'},
\quad 0<r<R.
\end{equation}
Consequently, \(q_I\) and \(Q_I\) are strictly increasing and tend to $+\infty$ as $r\uparrow R$.
Finally, define the logarithmic \(p\)-flux by
\[
X_I(x)
:=
\left|(\log u_I)'(x)\right|^{p-2}(\log u_I)'(x)
=
-\operatorname{sgn}(x)\,Q_I(|x|),
\quad x\in I,
\]
where \(\operatorname{sgn}(0)=0\).
\end{notation}

We first establish the one-dimensional two-point modulus estimate. The key observation is that \eqref{eq:QI-ode-main} implies that \(-X_I'\) is even and strictly increasing in \(|x|\). Consequently, its integral over any interval of fixed length is minimized when the interval is centered at the origin.

\begin{theorem}
\label{thm:one-dimensional-two-point-estimate}
Let \(1<p<\infty\), and adopt the notation of
Notation~\ref{not:one-dimensional-model}. Then, for every distinct
\(x,y\in I\),
\[
\left(
X_I(y)-X_I(x)
\right)\frac{y-x}{|y-x|}
\le
-2Q_I\!\left(\frac{|x-y|}{2}\right).
\]
Moreover, equality holds if and only if \(x=-y\).
\end{theorem}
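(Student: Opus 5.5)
By the symmetry of the statement under exchanging \(x\) and \(y\), I may assume \(x<y\); the claim is then
\[
X_I(x)-X_I(y)\ \ge\ 2Q_I\!\left(\tfrac{y-x}{2}\right).
\]
The plan is to write the left side as the integral of \(-X_I'\) over \([x,y]\) and then minimize that integral over all intervals of length \(y-x\), as anticipated in the remark preceding the theorem.

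First I would establish the regularity and monotonicity of \(X_I\). On \((0,R)\), \(X_I(s)=-Q_I(s)\), and \eqref{eq:QI-ode-main} shows that \(Q_I\) is \(C^1\) there with \(Q_I'\) extending continuously to \([0,R)\), with \(Q_I'(0^+)=\lambda_{1,p}(I)\) since \(Q_I(0)=0\). Because \(X_I\) is odd, \(X_I\) is \(C^1\) on \(I\), and
\[
g(s):=-X_I'(s)=Q_I'(|s|)=\lambda_{1,p}(I)+(p-1)Q_I(|s|)^{p'} .
\]
Thus \(g\) is even, continuous and positive. Since \(Q_I\) is strictly increasing on \([0,R)\) and \(p'>1\), \(g\) is also strictly increasing in \(|s|\). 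Near \(0\) there is a subtlety: \(Q_I\) itself is only known to be continuous at \(0\), so I would obtain \(X_I\in C^1\) there by the mean value theorem together with the continuity of \(Q_I'\) up to \(0\). Alternatively I can avoid this altogether: \(X_I\) is absolutely continuous, being piecewise \(C^1\) and continuous at \(0\), so \(X_I(x)-X_I(y)=\int_x^y g\) holds directly.

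Next set \(L:=y-x\) and \(m:=(x+y)/2\), so that
\[
X_I(x)-X_I(y)=F(m):=\int_{m-L/2}^{m+L/2} g(s)\,ds .
\]
I then compare with the centered interval, where \(F(0)=2\int_0^{L/2}Q_I'=2Q_I(L/2)\). For \(m>0\) I cancel the overlap and get
\[
F(m)-F(0)=\int_{L/2}^{m+L/2} g-\int_{-L/2}^{m-L/2} g .
\]
The map \(s\mapsto s-L\) sends the second range onto the first, and \(|s-L|<|s|\) whenever \(s>L/2\). Strict monotonicity of \(g\) in \(|s|\) then gives \(g(s)>g(s-L)\) on a set of positive measure, so \(F(m)>F(0)\). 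The case \(m<0\) follows by evenness of \(g\). Note that \(F(m)\) is only defined when \(|m|+L/2<R\), but that is exactly the range of admissible \(x,y\), and all integrals stay inside \(I\). This yields the inequality, with equality if and only if \(m=0\), i.e. \(x=-y\).

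The main obstacle is this last comparison, specifically getting strictness and handling the region near \(s=0\). Strictness needs only that \(Q_I\) be strictly increasing, which is given in Notation~\ref{not:one-dimensional-model}. Near zero, the absolute-continuity route sidesteps any differentiability issue of \(Q_I\) at \(0\).
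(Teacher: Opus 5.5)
Your proof is correct and follows essentially the same route as the paper: you write \(X_I(x)-X_I(y)\) as the integral of the even function \(g=-X_I'\), which is strictly increasing in \(|t|\), over an interval of length \(y-x\), and you show that the centered interval is the unique minimizer. The differences are cosmetic: you compare \(F(m)\) with \(F(0)\) via the shift \(s\mapsto s-L\) rather than differentiating \(F_r\) in \(m\), and you justify the integral identity across \(t=0\) by absolute continuity, a point the paper leaves implicit.
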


We next turn to the higher-dimensional case. We first establish a positive result for balls. For convenience, we begin by fixing the notation for the ball model.

\begin{notation}
\label{not:ball-model}
Let \(N\ge2\), \(1<p<\infty\), and \(B_R:=B_R(0)\). Let \(u_B>0\) be the first eigenfunction of
\eqref{eq:mainequation} on \(B_R\) with eigenvalue
\(\lambda_{1,p}(B_R)\), \(u_B(0)=1\). Writing $u_B(x)=U_B(|x|),$
we have
\[
U_B(0)=1,\qquad U_B'(0)=0,\qquad U_B(R)=0,\qquad U_B'(r)<0\quad \text{when}\quad 0<r<R.
\]
Define
\[
q_B(r):=-\frac{U_B'(r)}{U_B(r)},
\qquad
Q_B(r):=q_B(r)^{p-1},
\quad 0\le r<R.
\]
Then \(q_B(0)=Q_B(0)=0\), while \(q_B,Q_B>0\) on \((0,R)\). The logarithmic \(p\)-flux is
\[
X_B(x)
:=
|\nabla\log u_B(x)|^{p-2}\nabla\log u_B(x)
=
-Q_B(|x|)\frac{x}{|x|}
\quad\text{for }x\ne0,
\]
with \(X_B(0)=0\).
\end{notation}

The next result gives the sharp two-point estimate on balls.

\begin{theorem}\label{thm:intro-comparison}
Let \(N\ge2\), \(1<p<\infty\), and \(R>0\). Then, for every distinct
\(x,y\in B_R\),
\[
\left\langle
X_B(y)-X_B(x),\frac{y-x}{|y-x|}
\right\rangle
\le
-2Q_B\!\left(\frac{|x-y|}{2}\right),
\]
with equality if and only if \(y=-x\). Moreover,
\[
Q_B(r)>Q_I(r)
\qquad\text{for every }0<r<R.
\]
\end{theorem}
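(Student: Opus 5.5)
The plan works entirely with the radial ODE. Dividing the equation by \(u_B^{p-1}\) as in the introduction, and using \(\operatorname{div}X_B=-\bigl(Q_B'+\tfrac{N-1}{r}Q_B\bigr)\), gives
\[
Q_B'(r)+\frac{N-1}{r}Q_B(r)=\lambda_{1,p}(B_R)+(p-1)Q_B(r)^{p'},\qquad 0<r<R,
\]
with \(Q_B(r)\sim \lambda_{1,p}(B_R)\,r/N\) as \(r\downarrow0\) and \(Q_B\to+\infty\) as \(r\uparrow R\).

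\textbf{Step 1 (structural facts).} From this ODE I would first derive, by a maximum-principle or first-bad-point argument, three properties:
\begin{enumerate}[label=(\roman*)]
\item \(Q_B\) is strictly increasing;
\item \(r\mapsto Q_B(r)/r\) is strictly increasing, equivalently \(Q_B'>Q_B/r\);
\item \(Q_B\) is convex.
\end{enumerate}
For (ii), set \(w=Q_B'-Q_B/r\). Then \(w(0^+)=0\), and differentiating the ODE gives a linear first-order equation for \(w\) with a nonnegative forcing term. For (iii) I would use the same kind of argument on \(Q_B''\).

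\textbf{Step 2 (reduction to a scalar function).} Note that \(X_B=\nabla\Phi\) with \(\Phi(x)=-\int_0^{|x|}Q_B\). Let \(e=(y-x)/|y-x|\) and \(L=|y-x|\). Parametrize the line through \(x,y\) as \(z(s)=z_*+se\), where \(z_*\) is the foot of the perpendicular from \(0\). Put \(h=|z_*|\), \(\rho=\sqrt{s^2+h^2}\), and \(x=z(a)\), \(y=z(b)\) with \(b-a=L\). Then \(\langle X_B(z(s)),e\rangle=-f_h(s)\), where
\[
f_h(s):=Q_B(\rho)\,\frac{s}{\rho},\qquad
f_h'(s)=Q_B'(\rho)\frac{s^2}{\rho^2}+\frac{Q_B(\rho)}{\rho}\frac{h^2}{\rho^2}>0 .
\]
The claim therefore becomes
\[
f_h(b)-f_h(a)\ge 2Q_B(L/2).
\]

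\textbf{Step 3 (the case \(a<0<b\)).} By Step 1(ii), \(Q_B(\rho)/\rho\ge Q_B(|s|)/|s|\), so \(|f_h(s)|\ge Q_B(|s|)\). Hence
\[
f_h(b)-f_h(a)\ge Q_B(b)+Q_B(|a|)\ge 2Q_B\!\left(\tfrac{b+|a|}{2}\right)=2Q_B(L/2)
\]
by convexity, Step 1(iii). Equality forces \(h=0\) and \(b=|a|\), that is, \(y=-x\).

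\textbf{Step 4 (the case \(0\le a<b\), the main obstacle).} The case \(a<b\le0\) follows by symmetry. For \(h=0\), this is the one-dimensional argument behind Theorem~\ref{thm:one-dimensional-two-point-estimate}: \(f_0'=Q_B'(|s|)\) is even and increasing in \(|s|\), so its integral over an interval of length \(L\) is minimized by the centered one. For \(h>0\) I would first try to lower-bound the integrand: \(f_h'(s)\ge g(|s|)\) with \(g\) increasing and \(\int_{-L/2}^{L/2}g=2Q_B(L/2)\). The bound \(f_h'\ge Q_B(\rho)/\rho\) alone is too weak, since \(Q_B(s)/s<Q_B'(s)\). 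So the main attempt is instead a monotonicity in \(h\). One computes
\[
\partial_h f_h(s)=\frac{s\,h}{\rho^2}\Bigl(Q_B'(\rho)-\frac{Q_B(\rho)}{\rho}\Bigr),
\]
which is nonnegative for \(s\ge0\) by Step 1(ii). I would try to show that \(s\mapsto \partial_h f_h(s)\) is nondecreasing on \([0,\infty)\), which needs a further convexity-type property of \(Q_B\) from the ODE. Then \(f_h(b)-f_h(a)\ge f_0(b)-f_0(a)\ge 2Q_B(L/2)\). If that monotonicity resists, the fallback is to apply the one-dimensional rearrangement to the increasing radial function \(\rho\mapsto Q_B'(\rho)\), via \(f_h'\ge Q_B'(\rho)\,s^2/\rho^2+\dots\), splitting the segment at \(s=h\). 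The second assertion of Theorem~\ref{thm:intro-comparison} then follows by comparison.

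\textbf{Step 5 (\(Q_B>Q_I\)).} Near \(0\), \(Q_B\approx \lambda_{1,p}(B_R)r/N\) and \(Q_I\approx\lambda_{1,p}(I)r\). Near \(R\), both blow up with the same leading rate \(\bigl((p-1)/(R-r)\bigr)^{p-1}\). I would compare the next-order terms in these expansions, where the \(-(N-1)Q/r\) drift enters with a definite sign, to see that \(Q_B>Q_I\) at both ends. Then a first-touching-point argument on \((0,R)\), using the two ODEs, excludes interior crossings.
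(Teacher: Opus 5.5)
Your Steps 1--3 are sound. Step 3 is simpler than the paper's treatment of chords that straddle the foot point: you bound $|f_h(s)|\ge Q_B(|s|)$ using (ii) and then apply Jensen.

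The first gap is Step 4. For $h>0$ and $0\le a<b$ you only announce a monotonicity you ``would try to show'', and the fallback is not an argument. A direct computation gives
\[
\partial_s\partial_h f_h(s)=h\Bigl(\frac{h^2}{\rho^4}H_B(\rho)+\frac{s^2}{\rho^3}K_B(\rho)\Bigr),
\qquad
H_B:=Q_B'-\frac{Q_B}{r},\quad K_B:=Q_B''-\frac{2}{r}H_B=r\Bigl(\frac{Q_B}{r}\Bigr)''.
\]
So your main attempt needs precisely the convexity of $Q_B/r$. This is an additional property beyond (i)--(iii), and you neither identify nor prove it. It is the paper's key Lemma~\ref{lem:Q-convexity}: $K_B>0$ near $0$ follows from the two-term expansion of $Q_B$, and a first zero of $K_B$ is excluded by the first-order ODE that $K_B$ satisfies. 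With it the paper gets $f_h'(s)\ge Q_B'(|s|)$ for every chord, which is the choice $g=Q_B'$ you discarded. It then handles all cases at once by the one-dimensional rearrangement.

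Your own tools close this case without $K_B$. Since $a\ge0$ and $w:=Q_B/r$ is increasing, $f_h(a)=w\bigl(\sqrt{a^2+h^2}\bigr)a\le\frac ab f_h(b)$. Using $b\ge L$, this gives
\[
f_h(b)-f_h(a)\ge\frac Lb f_h(b)\ge L\,w(b)\ge L\,w(L)=Q_B(L)>2Q_B(L/2).
\]
Thus the sharp estimate and its equality case follow from (ii) and (iii) alone, which genuinely simplifies the paper's route.

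The second gap, in Step 5, is more substantive. Near $0$ the sign of $Q_B-Q_I$ is decided by whether $\lambda_{1,p}(B_R)/N$ exceeds $\lambda_{1,p}(I)$. Since $\lambda_{1,p}(B_R)$ is not explicit, no expansion settles this; next-order terms matter only in the equality case. This is Corollary~\ref{cor:ball-interval-eigenvalue}, and it must be proved, not assumed.

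The paper sets $\delta:=Q_B/r-Q_I/r$ and uses the contact identity
\[
r_0\delta'(r_0)=\lambda_{1,p}(B_R)-\lambda_{1,p}(I)-(N-1)Q_I(r_0)/r_0
\]
at zeros $r_0$ of $\delta$. Suppose $\lambda_{1,p}(B_R)\le N\lambda_{1,p}(I)$.
\begin{itemize}
\item Then $\delta<0$ near $0$; in the equality case this is because $\frac1{N+p'}<\frac1{1+p'}$.
\item Also $\delta>0$ near $R$, from $q_B(R-t)=\frac1t+\frac{N-1}{2(p-1)R}+o(1)$ versus $q_I(R-t)=\frac1t+o(1)$. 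The common blow-up is $Q\sim t^{1-p}$, without your factor $(p-1)^{p-1}$.
\item At the first zero $r_1$ of $\delta$ the identity gives $r_1\delta'(r_1)\le(N-1)\bigl(\lambda_{1,p}(I)-Q_I(r_1)/r_1\bigr)<0$, since $Q_I/r$ is strictly increasing with limit $\lambda_{1,p}(I)$ at $0$. This contradicts $\delta'(r_1)\ge0$.
\end{itemize}
The same identity, together with the monotonicity of $Q_I/r$, then excludes interior negative components. Differentiating once more excludes tangential contacts. Your first-touching-point idea is in this spirit, but it must be used to establish the sign at $0$ rather than presuppose it.
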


Hence \(Q_B\) is the sharp modulus on the ball, while the
one-dimensional modulus \(Q_I\) is strictly weaker. In particular,
\[
\left\langle
X_B(y)-X_B(x),\frac{y-x}{|y-x|}
\right\rangle
<
-2Q_I\!\left(\frac{|x-y|}{2}\right)
\]
for every distinct \(x,y\in B_R\). Equality in the sharp estimate with
\(Q_B\) occurs exactly for antipodal pairs \(y=-x\), whereas the
comparison with \(Q_I\) remains strict.

\smallskip

The proof has two parts. We first use the radial form of \(X_B\), together
with the monotonicity and convexity of \(Q_B\), to prove the sharp
two-point estimate with \(Q_B\). We then compare \(Q_B\) with \(Q_I\).
The asymptotic behavior at the origin and the boundary, together with a
contact-point argument for the corresponding first-order equations, gives
\(Q_B>Q_I\) on \((0,R)\).

\smallskip The proof of Theorem~\ref{thm:intro-comparison} also yields the following eigenvalue comparison between the ball and the one-dimensional model.

\begin{corollary}\label{cor:ball-interval-eigenvalue}
Let \(N\ge2\), \(1<p<\infty\), and \(R>0\). Then
\[
\lambda_{1,p}(B_R)
>
N\lambda_{1,p}((-R,R))
=
N(p-1)\left(\frac{\pi_p}{2R}\right)^p.
\]
\end{corollary}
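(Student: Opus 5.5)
We prove Corollary~\ref{cor:ball-interval-eigenvalue} from the Riccati-type equations for $Q_B$ and $Q_I$ near the origin, together with the strict inequality $Q_B>Q_I$ from Theorem~\ref{thm:intro-comparison}.

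First, we derive the first-order equation for $Q_B$. In radial form the equation reads
\[
-(r^{N-1}|U_B'|^{p-2}U_B')' = \lambda_{1,p}(B_R)\, r^{N-1} U_B^{p-1}.
\]
Since $|U_B'|^{p-2}U_B' = -Q_B\,U_B^{p-1}$, dividing by $r^{N-1}U_B^{p-1}$ gives
\[
Q_B'(r) + \frac{N-1}{r}Q_B(r) = \lambda_{1,p}(B_R) + (p-1)Q_B(r)^{p'}, \qquad 0<r<R.
\]
Next we determine the behaviour at $r=0$. Because $Q_B(0)=0$ and $Q_B$ is $C^1$ up to $0$ (or at least $Q_B(r)/r$ has a limit, by integrating $(r^{N-1}Q_B U_B^{p-1})'=\lambda r^{N-1}U_B^{p-1}$ from $0$), we obtain $Q_B(r)=\lambda_{1,p}(B_R)\,r/N+o(r)$. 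In the same way, \eqref{eq:QI-ode-main} gives $Q_I(r)=\lambda_{1,p}(I)\,r+o(r)$. Hence
\[
\lim_{r\downarrow0}\frac{Q_B(r)}{Q_I(r)}=\frac{\lambda_{1,p}(B_R)}{N\lambda_{1,p}(I)}.
\]

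Combining this with $Q_B>Q_I$ on $(0,R)$ yields only the non-strict inequality $\lambda_{1,p}(B_R)\ge N\lambda_{1,p}(I)$. The main obstacle is upgrading it to a strict inequality. Suppose equality holds, $\lambda_B=N\lambda_I$. Set $w:=Q_B-Q_I>0$. Subtracting the two equations gives
\[
w' = (N-1)\Bigl(\lambda_I-\frac{Q_B}{r}\Bigr) + (p-1)\bigl(Q_B^{p'}-Q_I^{p'}\bigr).
\]
Near $0$, both $Q_B$ and $Q_I$ are $\lambda_I r+O(r^{1+\delta})$. The plan is to expand to the next order and show that equality forces $w<0$ for small $r$, which is a contradiction. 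Concretely, we write $Q_I=\lambda_I r+a r^{1+p'}+\dots$, with $a=(p-1)\lambda_I^{p'}/(1+p')$ obtained from \eqref{eq:QI-ode-main}. Similarly, the $Q_B$ equation gives $Q_B=\lambda_I r+b r^{1+p'}+\dots$, where $(1+p'+N-1)b=(p-1)\lambda_I^{p'}$, so $b=(p-1)\lambda_I^{p'}/(N+p')<a$. Thus $Q_B<Q_I$ for small $r>0$, contradicting Theorem~\ref{thm:intro-comparison}. The expansions need justification via the integral forms $Q_I(r)=\lambda_I r+(p-1)\int_0^r Q_I^{p'}$ and $r^{N-1}Q_B=\int_0^r s^{N-1}(\lambda_B+(p-1)Q_B^{p'})\,ds$. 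These give exact leading terms with controlled errors, since $Q^{p'}=\lambda_I^{p'}s^{p'}(1+o(1))$. This establishes $\lambda_{1,p}(B_R)>N\lambda_{1,p}(I)$.

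Finally, $\lambda_{1,p}((-R,R))=(p-1)(\pi_p/2R)^p$ is the classical one-dimensional formula. It follows by integrating the first integral $(p-1)|u'|^p+\lambda|u|^p=\mathrm{const}$, or equivalently by integrating \eqref{eq:QI-ode-main} over $(0,R)$ using $Q_I\to\infty$. The latter gives
\[
R=\int_0^\infty \frac{dQ}{\lambda+(p-1)Q^{p'}},
\]
which evaluates to the $\pi_p$ formula.
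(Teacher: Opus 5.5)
Your argument is correct, but it runs in the opposite logical direction from the paper's. You take $Q_B>Q_I$ from Theorem~\ref{thm:intro-comparison} as given, read off $\lambda_{1,p}(B_R)\ge N\lambda_{1,p}(I)$ from the limits of $Q_B(r)/r$ and $Q_I(r)/r$ at $0$, and then exclude equality. For the equality case you compare the $r^{1+p'}$ coefficients $\frac{p-1}{N+p'}\lambda_{1,p}(I)^{p'}$ and $\frac{p-1}{1+p'}\lambda_{1,p}(I)^{p'}$; your integral-form justification of these expansions is sound.

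In the paper, the eigenvalue inequality is proved first, inside the proof of Theorem~\ref{thm:intro-comparison}, and is then used to obtain $Q_B>Q_I$. The paper assumes $\lambda_{1,p}(B_R)\le N\lambda_{1,p}(I)$ and sets $\delta:=Q_B/r-Q_I/r$. The origin expansion you use gives $\delta<0$ near $0$. The boundary expansions $q_B(R-t)=\frac1t+\frac{N-1}{2(p-1)R}+o(1)$ and $q_I(R-t)=\frac1t+o(1)$ give $\delta>0$ near $R$. At the first zero $b$ of $\delta$, the contact identity $b\delta'(b)=\lambda_{1,p}(B_R)-\lambda_{1,p}(I)-(N-1)Q_I(b)/b$ and the bound $Q_I(b)/b>\lambda_{1,p}(I)$ force $\delta'(b)<0$, a contradiction.

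The resulting positivity of $\delta$ near $0$ is exactly what the paper then needs: it confines any negative region of $\delta$ to a component compactly contained in $(0,R)$, which is how $Q_B>Q_I$ is proved. So your derivation is a legitimate short consequence of the theorem as stated. It is not an independent proof of the eigenvalue inequality, though, and could not replace that step in the paper without circularity. The paper's contact-point argument is self-contained and shows that strictness comes from the curvature term $(N-1)/r$, via the boundary asymptotics and the monotonicity of $Q_I/r$.

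Your derivation of $\lambda_{1,p}((-R,R))=(p-1)(\pi_p/2R)^p$ from $R=\int_0^\infty dQ/(\lambda+(p-1)Q^{p'})$ is also correct. The substitution $Q=(\lambda/(p-1))^{1/p'}\tau$ together with $\int_0^\infty d\tau/(1+\tau^{p'})=(p-1)\pi_p/2$ gives the formula; the paper simply cites it as classical.
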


The eigenvalue comparison in Corollary~\ref{cor:ball-interval-eigenvalue}
also gives a direct improvement of a known lower bound for
\(\lambda_{1,p}(B_R)\). The one-dimensional eigenvalue and the associated
generalized trigonometric functions are classical; see, for instance,
\cite{Lindqvist1995,LangEdmunds2011,EdmundsGurkaLang2012}. Estimates for
the principal \(p\)-eigenvalue in higher dimensions have been obtained in
\cite{BenediktDrabek2012,Benedikt2015,AghajaniMoslehTehrani2018,
Kajikiya2015}. In particular, Benedikt and Dr\'abek proved $\lambda_{1,p}(B_R)\ge \frac{Np}{R^p}.$
On the other hand, by \cite[Corollary~1.4]{KajikiyaTakeuchi2025},
\[
\lambda_{1,p}((-1,1))
=(p-1)\left(\frac{\pi_p}{2}\right)^p>p,
\qquad 1<p<\infty.
\]
Hence Corollary~\ref{cor:ball-interval-eigenvalue} gives
\[
\lambda_{1,p}(B_R)
>
\frac{N}{R^p}\lambda_{1,p}((-1,1))
>
\frac{Np}{R^p}.
\]
For \(p=2\), related lower bounds follow from classical estimates for
the first positive zero of Bessel functions; see \cite{Lorch1993}.
To the best of our knowledge, the above strict ball-to-interval comparison
has not previously been recorded in this form for the Dirichlet
\(p\)-Laplacian.

\smallskip

Theorem~\ref{thm:intro-comparison} shows that the sharp one-dimensional two-point modulus persists on balls for every \(1<p<\infty\). This positive result is a consequence of the radial structure of the first eigenfunction, which reduces the logarithmic \(p\)-flux to the scalar equation
\[
Q_B'(r)+\frac{N-1}{r}Q_B(r)
=
\lambda_{1,p}(B_R)+(p-1)Q_B(r)^{p'}.
\]
Such a reduction is no longer available on general convex domains. 

For \(p\neq2\), the corresponding one-dimensional two-point modulus fails in general. The arguments are different in
the subquadratic and superquadratic cases. For \(1<p<2\), the tangential
logarithmic \(p\)-flux vanishes near a flat boundary patch, which leads,
after smooth uniformly convex approximation, to fixed-scale two-point
fluxes converging to zero. For \(p>2\), thin-domain asymptotics on
\(D\times(-\varepsilon,\varepsilon)\) show that the first longitudinal
correction is governed by the Dirichlet Laplacian on \(D\), rather than by
the one-dimensional \(p\)-Laplacian. Finally,
Section~\ref{sec:geometric-structure-logarithmic-flux} shows that, for
arbitrary \(C^2\) functions, positive semidefiniteness of the symmetric
differential of the \(p\)-gradient implies convexity, while the converse
holds for all such functions if and only if \(p=2\).

\smallskip

The following theorem shows that, for \(1<p<2\), the component of
\(X_{\Omega}(y)-X_{\Omega}(x)\) in the direction \(y-x\) can be made
arbitrarily small for every sufficiently small \(r>0\), with
\(|x-y|=2r\), even on smooth uniformly convex domains of fixed diameter.

\begin{theorem}
\label{thm:failure-universal-two-point-subquadratic}
Let \(N\ge2\), \(1<p<2\), and \(D>0\). Then there exists \(r_*>0\) such that,
for every  \(r\in(0,r_*)\), there exist bounded \(C^\infty\)
uniformly convex domains \(\Omega_k\subset\mathbb R^N\) and distinct points
\(x_k,y_k\in\Omega_k\) satisfying
\[
\operatorname{diam}(\Omega_k)=D,
\qquad
|x_k-y_k|=2r,
\]
for which, if \(u_k>0\) is the first eigenfunction of
\eqref{eq:mainequation} on \(\Omega_k\) and
\[
X_{\Omega_k}
:=
|\nabla\log u_k|^{p-2}\nabla\log u_k,
\]
then
\[
\left\langle
X_{\Omega_k}(y_k)-X_{\Omega_k}(x_k),
\frac{y_k-x_k}{|y_k-x_k|}
\right\rangle
\longrightarrow0
\qquad\text{as }k\to\infty.
\]
\end{theorem}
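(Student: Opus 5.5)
The plan is to first build a non-smooth convex limit domain \(\Omega_\infty\) with a flat boundary patch. On \(\Omega_\infty\) we find two points \(x,y\) at distance \(2r\), placed near the flat patch and displaced tangentially to it, for which the two-point flux vanishes exactly. We then approximate \(\Omega_\infty\) by smooth uniformly convex domains and pass to the limit using stability of the first eigenfunction together with local \(C^{1,\alpha}\) convergence of \(\log u_k\). To fix ideas, take \(\Omega_\infty\) to be a convex domain of diameter \(D\) containing the half-cube \(\{|x'|_\infty<a,\ 0<x_N<a\}\), whose bottom face \(\{x_N=0,\ |x'|_\infty<a\}\) lies in \(\partial\Omega_\infty\). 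One admissible choice is a convex hull of that flat square with a point at height chosen to make the diameter exactly \(D\); this is possible as long as \(a\) is small relative to \(D\), and this is where \(r_*\) (of order \(a\)) comes from.

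The key local fact concerns the first eigenfunction \(u\) of \(\Omega_\infty\). Near the interior of the flat face, boundary regularity for the \(p\)-Laplacian (Lieberman's \(C^{1,\alpha}\) up to flat boundary) and the Hopf lemma give \(u(x',x_N)=x_N\,g(x',x_N)\) with \(g>0\) and \(\partial_N u(x',0)=g(x',0)>0\). It follows that \(\nabla\log u=(\nabla' u,\partial_N u)/u\), with normal component \(\approx 1/x_N\to\infty\) and tangential component \(\nabla' u/u\). The tangential component stays bounded as \(x_N\to0\); it tends to \(\nabla' g(x',0)/g(x',0)\) as long as \(u\) is \(C^{2}\) in \(x'\) near the face, which holds since \(\nabla u\neq0\) there, so the equation is uniformly elliptic and smooth. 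Consequently
\[
X=|\nabla\log u|^{p-2}\nabla\log u,\qquad X' \sim x_N^{2-p}\,(\text{bounded})\to0,\qquad X_N\sim x_N^{1-p}.
\]
Because \(p<2\), the tangential flux tends to zero at the flat face. This is precisely where \(1<p<2\) enters.

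Now take \(x=(-r e_1,t)\) and \(y=(r e_1,t)\) with \(2r<a\), so that \(|x-y|=2r\) and \(y-x\) is tangential. The two-point quantity \(\langle X(y)-X(x),e_1\rangle=X_1(y)-X_1(x)\) is then bounded by \(Ct^{2-p}\), which tends to \(0\) as \(t\downarrow0\). A diagonal argument then yields the domains: choose \(t_k\to0\), and for each \(k\) choose a smooth uniformly convex \(\Omega_k\) approximating \(\Omega_\infty\). The natural choice is an inner parallel smoothing plus a tiny multiple of \(|x|^2\) in the gauge, rescaled so that \(\operatorname{diam}=D\) exactly. The approximation must be good enough that \(\nabla\log u_k\) is within \(1/k\) of \(\nabla\log u\) at \((\pm re_1,t_k)\). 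This uses the fact that \(u_k\to u\) in \(C^1_{\mathrm{loc}}(\Omega_\infty)\) once eigenvalues and normalized eigenfunctions converge under Hausdorff convergence of convex domains. Here \(C^{1,\alpha}\) interior estimates apply, and in fact \(C^2\) estimates near points where \(\nabla u\neq0\). Because \(t_k\) is fixed before \(\Omega_k\) is chosen, only interior convergence is needed.

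The main obstacle is the boundary asymptotics on the flat patch. Specifically, we must show \(|\nabla' u|/u\) is bounded, not merely \(o(1/x_N)\), uniformly for \(|x'|\le r\) as \(x_N\to0\). I would first try an odd reflection across \(\{x_N=0\}\): the reflected \(u\) solves the same equation in a full neighbourhood, since the \(p\)-Laplacian commutes with reflection and the right-hand side \(\lambda|u|^{p-2}u\) is odd. Because \(\nabla u\neq0\) there, the reflected \(u\) is smooth. Then \(\nabla' u\) vanishes on \(x_N=0\), so \(|\nabla'u|\le Cx_N\) and \(|\nabla' u|/u\le C'\), which gives the estimate \(|X'|\le Cx_N^{2-p}\) directly. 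If the bounded tangential part turns out delicate, an alternative is to take \(\Omega_\infty\) symmetric under \(x_1\mapsto -x_1\); then one can instead compare \(X_1(\pm re_1,t)\), whose difference is \(2X_1(re_1,t)\), still \(O(t^{2-p})\).
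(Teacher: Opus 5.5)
Your proposal is correct and follows the paper's proof essentially step for step. The Hopf lemma plus $C^{2}$ regularity up to a flat boundary patch gives $\langle X(s,t),e\rangle=O(t^{2-p})$ for tangential $e$. A diagonal argument then transfers this to smooth uniformly convex domains of diameter $D$, using interior $C^1$ convergence of the eigenfunctions with the height $t_k$ fixed before the approximating domain is chosen.

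The remaining differences are technical:
\begin{enumerate}
\item The paper starts from a $C^\infty$ convex $\Omega_0$ with a flat patch. For such a domain, $\{\mu+\varepsilon|x|^2<1\}$ (with $\mu$ the Minkowski functional) is already $C^\infty$, uniformly convex, and increases to $\Omega_0$, so eigenvalue convergence follows from domain monotonicity. For your pyramid, the inner-parallel-plus-gauge construction does not smooth the edges. You would instead need a standard $C^\infty_+$ approximation in the Hausdorff metric together with Hausdorff continuity of $\lambda_{1,p}$ on convex sets. The simplest fix is to start, as the paper does, from a smooth convex domain with a flat patch.
\item You get $C^2$ up to the face by odd reflection, where the paper uses boundary Schauder estimates. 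For $1<p<2$ the reflected solution is only $C^{2,\beta}$ rather than smooth, since $|u|^{p-2}u$ is merely H\"older. This is all your argument actually uses.
\end{enumerate}
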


In particular, the two-point estimate associated with the one-dimensional model
cannot hold for all smooth uniformly convex domains.
In fact, Theorem \ref{thm:failure-universal-two-point-subquadratic} gives a stronger conclusion: no estimate of the
form
\[
\left\langle
X_\Omega(y)-X_\Omega(x),
\frac{y-x}{|y-x|}
\right\rangle
\le -2\psi\left(\frac{|x-y|}{2}\right)
\]
can hold uniformly over this class of domains if
\(\psi(r)>0\) for some sufficiently small \(r\).

The proof starts from a convex domain containing a flat boundary patch.
Writing \(t\) for the inward normal distance and \(z\) for the tangential
variable, the Hopf expansion gives
\[
u(z,t)=a(z)t+O(t^2),
\]
and hence $|\nabla\log u(z,t)|^{p-2}=O(t^{2-p}).$
Since \(1<p<2\), the tangential component of the logarithmic \(p\)-flux
therefore tends to zero as \(t\downarrow0\). For every sufficiently small fixed \(r>0\), choosing two points at the same normal height and with tangential separation \(2r\) makes the corresponding two-point flux quantity tend to zero as the normal height approaches the boundary.
We then approximate the flat domain by smooth uniformly convex domains
and rescale them so that both the diameter \(D\) and the separation
\(|x-y|=2r\) are preserved. Combining the boundary limit with the smooth uniformly convex approximation yields the sequence in Theorem~\ref{thm:failure-universal-two-point-subquadratic}.

\smallskip

For \(p>2\), the failure is of a different type. We consider thin convex
domains $\Omega_\varepsilon
:=
D\times(-\varepsilon,\varepsilon),$
where \(D\subset\mathbb R^{N-1}\) is an arbitrary bounded convex domain.
After rescaling the transverse variable, the leading-order term is given
by the first one-dimensional \(p\)-eigenvalue, whereas the second-order
tangential term is governed by the ordinary Dirichlet Laplacian on
\(D\). The tangential factor in the limiting rescaled eigenfunction is
therefore determined by the first Dirichlet eigenfunction of the
Laplacian on \(D\), rather than by a \(p\)-Laplacian eigenfunction. This
is incompatible with the one-dimensional \(p\)-Laplacian comparison.

\begin{theorem}
\label{profileprop:thin-domain}
Let \(N\geq2\), \(p>2\), \(J:=(-1,1)\), and let
\(D\subset\mathbb R^{N-1}\) be a bounded convex domain.
Fix \(x_0\in D\), and let \(u_\varepsilon>0\) be the first eigenfunction of
\eqref{eq:mainequation} on $\Omega_\varepsilon:=D\times(-\varepsilon,\varepsilon),$
normalized by \(u_\varepsilon(x_0,0)=1\). Let \(\phi>0\) be the first
eigenfunction of \eqref{eq:mainequation} on \(J\), normalized by
\[
\int_J\phi^p\,dz=1,
\qquad
A_p:=\int_J|\phi'|^{p-2}\phi^2\,dz.
\]
Let \(G_D>0\) be the first Dirichlet eigenfunction of the Laplacian on
\(D\), normalized by $\int_DG_D^2\,dx=1.$
Then
\[
u_\varepsilon(x,\varepsilon z)
\longrightarrow
\frac{\phi(z)}{\phi(0)}
\left(
\frac{G_D(x)}{G_D(x_0)}
\right)^{2/p}
\]
locally uniformly in \(D\times J\) as \(\varepsilon\downarrow0\).
Moreover,
\begin{equation}\label{eq:thin-eigenvalue-second-order}
\varepsilon^p\lambda_{1,p}(\Omega_\varepsilon)
=
\lambda_{1,p}(J)
+
\frac{2A_p}{p}\lambda_{1,2}(D)\varepsilon^2
+
o(\varepsilon^2).
\end{equation}
\end{theorem}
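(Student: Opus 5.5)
The argument is variational and works in the rescaled variables \(v(x,z):=u(x,\varepsilon z)\) on the fixed cylinder \(D\times J\). There the Rayleigh quotient becomes
\[
\varepsilon^p\lambda_{1,p}(\Omega_\varepsilon)=\inf_v\frac{\int_{D\times J}\bigl(|\partial_z v|^2+\varepsilon^2|\nabla_x v|^2\bigr)^{p/2}}{\int_{D\times J}|v|^p}.
\]
For \(p>2\) I expand the integrand to second order in \(\varepsilon\). Pointwise,
\[
(a^2+\varepsilon^2b^2)^{p/2}\ge |a|^p+\tfrac p2\varepsilon^2|a|^{p-2}b^2 .
\]
Equality holds up to \(o(\varepsilon^2)\) whenever \(b\) is bounded and \(a\) is controlled; since \(p>2\), this convexity-type inequality holds exactly, because \(s\mapsto s^{p/2}\) is convex.

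\textbf{Upper bound.} I test with \(v=\phi(z)w(x)^{2/p}\), where \(w\in C_c^\infty(D)\) and \(w\ge0\). Then \(\partial_zv=\phi'w^{2/p}\) and \(\nabla_xv=\tfrac2p\phi w^{2/p-1}\nabla w\). Expanding gives numerator \(\lambda_{1,p}(J)\int w^2+\tfrac p2\varepsilon^2\tfrac4{p^2}A_p\int|\nabla w|^2+o(\varepsilon^2)\), using \(|\phi'|^{p-2}\phi^2\) integrated over \(J\), and denominator \(\int w^2\). Approximating \(G_D\) by such \(w\) (the powers \(2/p<1\) need care where \(w\) is small, but \(|\phi'|^{p-2}\phi^2w^{2(2/p-1)}w^{2(p-2)/p}|\nabla w|^2=|\phi'|^{p-2}\phi^2|\nabla w|^2\), so the exponents cancel exactly) yields
\[
\varepsilon^p\lambda_{1,p}(\Omega_\varepsilon)\le\lambda_{1,p}(J)+\tfrac{2A_p}{p}\lambda_{1,2}(D)\varepsilon^2+o(\varepsilon^2).
\]

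\textbf{Lower bound and compactness.} Normalize \(\int v_\varepsilon^p=1\). Convexity gives
\[
\varepsilon^p\lambda\ge\int|\partial_zv|^p+\tfrac p2\varepsilon^2\int|\partial_zv|^{p-2}|\nabla_xv|^2 .
\]
Slicing, \(\int|\partial_zv|^p\ge\lambda_{1,p}(J)\int v^p\), so the deficit in the 1D Rayleigh quotient on slices is \(O(\varepsilon^2)\). Combined with stability of the simple 1D eigenvalue, this forces
\[
v_\varepsilon(x,z)\approx\phi(z)m_\varepsilon(x),\qquad m_\varepsilon(x)^p:=\int_Jv_\varepsilon^p(x,\cdot),
\]
with \(\int m_\varepsilon^p=1\). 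Setting \(w_\varepsilon:=m_\varepsilon^{p/2}\), the weighted term \(\tfrac p2\int|\partial_zv|^{p-2}|\nabla_xv|^2\) should bound \(\tfrac{2A_p}{p}\int|\nabla w_\varepsilon|^2\) from below asymptotically. This bounds \(w_\varepsilon\) in \(H^1_0(D)\), and in the limit \(\int|\nabla w|^2\le\lambda_{1,2}(D)\) with \(\int w^2=1\). Hence \(w=G_D\), which gives the eigenvalue expansion and the \(L^p\) convergence \(v_\varepsilon\to\phi\,G_D^{2/p}\).

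\textbf{Main obstacle.} I expect the hardest step to be the lower bound for the cross term. It needs a quantitative statement that \(\partial_zv_\varepsilon\) is close to \(\phi'm_\varepsilon\) in a form strong enough to pass to the limit in \(\int|\partial_zv|^{p-2}|\nabla_xv|^2\). My first attempt would be a Picone/hidden-convexity argument: the functional \(\int|\partial_z(\cdot)|^p\) is convex along \(v^p\)-interpolations, and a weighted Cauchy–Schwarz inequality in \(z\) on each slice should give
\[
\int_J|\partial_zv|^{p-2}|\nabla_xv|^2\,dz\ge\frac{4A_p}{p^2}|\nabla_xw|^2-o(1).
\]
Alternatively, I would use lower semicontinuity after establishing strong convergence of \(\partial_zv_\varepsilon\) from the near-optimality of the slices.

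\textbf{Locally uniform convergence.} Finally, I upgrade the \(L^p\) convergence to locally uniform convergence. The rescaled equation is
\[
-\operatorname{div}\bigl(|(\varepsilon\nabla_x v,\partial_zv)|^{p-2}(\varepsilon^2\nabla_xv,\partial_zv)\bigr)=\varepsilon^p\lambda v^{p-1}.
\]
It is anisotropic, so I would use the \(x\)-translation invariance of \(D\times J\) locally: difference quotients in \(x\), together with Moser iteration and Harnack estimates in the degenerate variables, should give local Hölder bounds uniform in \(\varepsilon\). Alternatively, log-concavity (Sakaguchi) gives uniform local Lipschitz bounds for \(\log v_\varepsilon\) in \(x\) via concavity plus \(L^p\) bounds, and in \(z\) via the 1D slice structure. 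Arzelà–Ascoli then yields locally uniform convergence. Normalizing by the value at \((x_0,0)\) gives the stated profile, since the limit is positive there.
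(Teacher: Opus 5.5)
\textbf{Verdict: there is a genuine gap, in the lower bound.} Your upper bound essentially matches the paper's. One caveat there: take $w$ vanishing to infinite order at the edge of its support, so that $w^{2/p}\in W^{1,p}$ and the remainder $\varepsilon^p\int\phi^p|\nabla_x(w^{2/p})|^p$ is finite. Your log-concavity upgrade at the end is also the paper's route: log-concavity of $v_\varepsilon$ jointly in $(x,z)$ on the convex set $D\times J$, combined with $L^2_{\mathrm{loc}}$ convergence, gives locally uniform convergence.

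\textbf{Why the lower bound fails.} The lower bound is where both the sharp constant and the compactness in $x$ come from. You apply the tangent-line inequality with $a=\partial_zv$ and discard the slice deficit. After that, everything rests on the slice-wise estimate $\int_J|\partial_zv|^{p-2}|\nabla_xv|^2\,dz\ge\frac{4A_p}{p^2}|\nabla_xw|^2-o(1)$ with $w=(\int_Jv^p\,dz)^{1/2}$. This estimate is false, even for families with $E_\varepsilon(v)-\lambda_{1,p}(J)\int v^p=O(\varepsilon^2)$.
\begin{enumerate}
\item Write $v=\phi H^{2/p}$ and $\rho=|\phi'|^{p-2}\phi^2$. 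When $H_z=O(\varepsilon)$, the cross term is $\frac4{p^2}\int_J\rho|\nabla_xH|^2\,dz+o(1)$. On the other hand, $w^2=\int_J\phi^pH^2\,dz$ gives $\nabla_xw\approx\int_J\phi^p\nabla_xH\,dz$. The two averages carry different weights.
\item Take $H=G(x)+\varepsilon\chi(x)k(x/\varepsilon)\kappa(z)$, with $\chi$ a cutoff inside $\{G>0\}$ and $k$ periodic with mean zero. This keeps $H_z=O(\varepsilon)$ and the energy excess $O(\varepsilon^2)$, but $\nabla_xH$ carries an $O(1)$ oscillation that depends on $z$.
\item In the limit your inequality reduces to $\int_J\rho\kappa^2\,dz\ge A_p(\int_J\phi^p\kappa\,dz)^2$. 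This fails for bounded $\kappa$ approximating $\phi^p/\rho$, because $A_p\int_J\phi^{2p}\rho^{-1}\,dz>(\int_J\phi^p\,dz)^2=1$. The inequality is Cauchy--Schwarz, and it is strict since $|\phi'|/\phi$ is not constant.
\end{enumerate}
So Cauchy--Schwarz with your $w$ only yields the constant $(\int_J\phi^{2p}\rho^{-1}\,dz)^{-1}<A_p$. Neither the $H_0^1(D)$ bound on $w_\varepsilon$ nor the constant $\frac{2A_p}{p}\lambda_{1,2}(D)$ follows, and nothing in the proposal excludes such oscillations for minimizers. Your lower-semicontinuity alternative meets the same obstruction. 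The only uniform control on $\nabla_xv_\varepsilon$ is weighted by the solution-dependent, degenerate factor $|\partial_zv_\varepsilon|^{p-2}$, together with the bound $\varepsilon^p\int|\nabla_xv_\varepsilon|^p\le C$, which gives no control as $\varepsilon\downarrow0$. That leaves no weak compactness with which to pass to the limit.

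\textbf{The missing idea: the Picone decomposition.} It keeps the transverse deficit and replaces $|\partial_zv|^{p-2}$ by a fixed weight.
\begin{enumerate}
\item Set $h=v/\phi$, $\xi=(0,h\phi')$ and $\eta=(\varepsilon\phi\nabla_xh,\phi h_z)$. Testing the one-dimensional equation with $v^p/\phi^{p-1}$ gives the exact identity $E_\varepsilon(v)-\lambda_{1,p}(J)\int_Qv^p=\int_Q(|\xi+\eta|^p-|\xi|^p-p|\xi|^{p-2}\xi\cdot\eta)$. To make this rigorous, first replace $\phi$ by $\phi+\delta$, then let $\delta\downarrow0$.
\item For $p>2$, use the pointwise bounds $(|b|^2+(a+c)^2)^{p/2}-|a|^p-p|a|^{p-2}ac\ge\frac p2|a|^{p-2}|b|^2$ and $\ge c_p|a|^{p-2}c^2$. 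Apply them with $a=h\phi'$ rather than $a=\partial_zv$, and with $b=\varepsilon\phi\nabla_xh$, $c=\phi h_z$.
\item With $H=(v/\phi)^{p/2}$, this bounds the excess below by both $\frac2p\varepsilon^2\int_Q\rho|\nabla_xH|^2$ and $c_p\int_Q\rho|H_z|^2$.
\item The right tangential profile is the $\rho$-weighted mean $G_\varepsilon=A_p^{-1}\int_J\rho H\,dz$, not $(\int_Jv^p\,dz)^{1/2}$. Jensen gives $\int_J\rho|\nabla_xH|^2\,dz\ge A_p|\nabla G_\varepsilon|^2$ exactly.
\item A weighted transverse Poincar\'e inequality, $\int_Q\phi^p|H-G_\varepsilon|^2\le C\int_Q\rho|H_z|^2=O(\varepsilon^2)$, gives $\|G_\varepsilon\|_{L^2(D)}\to1$.
\end{enumerate}
These steps yield the sharp liminf, boundedness of $G_\varepsilon$ in $H_0^1(D)$, strong convergence $G_\varepsilon\to G_D$, and hence $v_\varepsilon\to\phi G_D^{2/p}$ in $L^p(Q)$. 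Your log-concavity step then applies.
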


Theorem~\ref{profileprop:thin-domain} shows that, although the leading
transverse term is determined by the one-dimensional \(p\)-Laplacian, the
first nontrivial tangential correction is quadratic. More precisely,
after factoring out the transverse eigenfunction, the second-order energy
contains the weighted term
\[
\int_D F^{p-2}|\nabla F|^2\,dx,
\]
which becomes the ordinary Dirichlet energy under the change of variables
\(G=F^{p/2}\). This gives the first Dirichlet eigenvalue problem for the
Laplacian on \(D\), and yields both the coefficient
\(\lambda_{1,2}(D)\) in \eqref{eq:thin-eigenvalue-second-order} and the
rescaled limit of \(u_\varepsilon\).

The proof is based on a Picone-type decomposition of the anisotropically
rescaled \(p\)-energy, weighted transverse compactness, and the sharp
Dirichlet Poincar\'e inequality on \(D\). The local uniform convergence
of the rescaled eigenfunctions follows from the log-concavity of the first
\(p\)-eigenfunction. This limit will be used below to show that the
one-dimensional two-point modulus cannot hold on sufficiently thin
domains when \(p>2\).

\begin{theorem}
\label{thm:general-convex-flux-failure-p-superquadratic}
Let \(N\geq2\), \(p>2\), and let
\(D\subset\mathbb R^{N-1}\) be a bounded convex domain.
Let \(G_D>0\) be the first Dirichlet eigenfunction of the Laplacian on
\(D\), let \(x_D\in D\) be a maximum point of \(G_D\), and fix
\(e\in\mathbb S^{N-2}\). Then there exists \(t_{p,D}>0\) such that,
for every \(t\in(0,t_{p,D})\), there exists
\(\varepsilon_{p,D,t}>0\) such that, for every
\(0<\varepsilon<\varepsilon_{p,D,t}\), the points
\[
x:=(x_D,0),
\qquad
y:=(x_D+te,0)
\]
belong to $\Omega_\varepsilon
:=
D\times(-\varepsilon,\varepsilon)
\subset\mathbb R^N$
and satisfy
\[
\left\langle
X_{\Omega_\varepsilon}(y)-X_{\Omega_\varepsilon}(x),
\frac{y-x}{|y-x|}
\right\rangle
>
-2Q_{I_\varepsilon}\!\left(\frac{|x-y|}{2}\right),
\]
where
\[
X_{\Omega_\varepsilon}
:=
|\nabla\log u_\varepsilon|^{p-2}\nabla\log u_\varepsilon,
\qquad
2R_\varepsilon
:=
\operatorname{diam}(\Omega_\varepsilon),
\qquad
I_\varepsilon:=(-R_\varepsilon,R_\varepsilon),
\]
\(u_\varepsilon>0\) is the first eigenfunction of
\eqref{eq:mainequation} on \(\Omega_\varepsilon\), and
\(Q_{I_\varepsilon}\) is defined in
Notation~\ref{not:one-dimensional-model}.
\end{theorem}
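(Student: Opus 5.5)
The plan is to pass to the limit $\varepsilon\downarrow0$ on both sides of the inequality for fixed $t$, and then choose $t$ small. The left side will converge to an explicit quantity built from $G_D$, and that quantity is $O(t^{p-1})$. The right side will converge to $-2Q_{I_0}(t/2)$, which is of exact order $t$. Since $p-1>1$, the inequality then holds for small $t$ and all sufficiently small $\varepsilon$.

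\emph{Step 1: convergence of the tangential logarithmic gradient.} Set $v_\varepsilon(x,z):=\log u_\varepsilon(x,\varepsilon z)$ on $D\times J$. Since $u_\varepsilon$ is log-concave (Sakaguchi) and $(x,z)\mapsto(x,\varepsilon z)$ is linear, each $v_\varepsilon$ is concave. By Theorem~\ref{profileprop:thin-domain}, $v_\varepsilon\to v$ locally uniformly, where
\[
v(x,z)=\log\frac{\phi(z)}{\phi(0)}+\frac2p\log\frac{G_D(x)}{G_D(x_0)} .
\]
This limit is $C^1$ in $D\times J$: $G_D>0$ is smooth in $D$, and $\phi$ is $C^1$ and positive on $J$. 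A standard fact from convex analysis (locally uniform convergence of concave functions to a differentiable limit gives locally uniform convergence of gradients, cf.\ Rockafellar, Thm.~25.7) then yields $\nabla v_\varepsilon\to\nabla v$ locally uniformly; here $u_\varepsilon\in C^1$, so $v_\varepsilon$ is differentiable. Since $\nabla_x v_\varepsilon(x,0)=\nabla_x\log u_\varepsilon(x,0)$, we get
\[
\nabla_x\log u_\varepsilon(\cdot,0)\to \frac2p\nabla\log G_D \quad\text{locally uniformly in } D .
\]
The normal component on the midplane vanishes, $\partial_z u_\varepsilon(x,0)=0$. This follows from the evenness of $u_\varepsilon$ in the last variable, which comes from simplicity of the first eigenfunction together with the reflection invariance of $\Omega_\varepsilon$. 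Hence at $x=(x_D,0)$ and $y=(x_D+te,0)$ the fluxes are purely tangential, and
\[
\Big\langle X_{\Omega_\varepsilon}(y)-X_{\Omega_\varepsilon}(x),\tfrac{y-x}{|y-x|}\Big\rangle \to \langle W(x_D+te)-W(x_D),e\rangle,
\qquad W:=\Big|\tfrac2p\nabla\log G_D\Big|^{p-2}\tfrac2p\nabla\log G_D .
\]

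\emph{Step 2: size of the limit.} Since $x_D$ is a maximum point of $G_D$, we have $\nabla G_D(x_D)=0$, so $W(x_D)=0$. Fix $\rho>0$ with $\overline{B_{2\rho}(x_D)}\subset D$. On this ball $G_D\ge c>0$ and $|\nabla G_D(x_D+s)|\le C|s|$. Therefore, for $0<t<\rho$, the limit in Step 1 has absolute value at most $C_1t^{p-1}$.

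\emph{Step 3: the right-hand side.} We have $R_\varepsilon=\frac12\sqrt{\operatorname{diam}(D)^2+4\varepsilon^2}\to R_0:=\operatorname{diam}(D)/2$. By scaling, $Q_{I_R}(r)=R^{1-p}Q_{I_1}(r/R)$, so $Q_{I_\varepsilon}(t/2)\to Q_{I_0}(t/2)$. From \eqref{eq:QI-ode-main}, $Q_{I_0}(0)=0$ and $Q_{I_0}'\ge\lambda_{1,p}(I_0)$, hence $Q_{I_0}(t/2)\ge\lambda_{1,p}(I_0)\,t/2$. The limiting inequality to check is therefore
\[
-C_1t^{p-1}>-\lambda_{1,p}(I_0)\,t ,
\]
which holds for $t<t_{p,D}:=\min\{\rho,(\lambda_{1,p}(I_0)/C_1)^{1/(p-2)}\}$; this is where $p>2$ is used. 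For each such $t$ the inequality is strict in the limit. By the two convergences in Steps 1 and 3, it therefore persists for all $0<\varepsilon<\varepsilon_{p,D,t}$.

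The main obstacle is Step 1. Theorem~\ref{profileprop:thin-domain} gives only locally uniform convergence of function values, while the statement concerns gradients. The concavity argument above is the route I would take first to upgrade this, relying on the smoothness of the limit $v$ in $x$. If that route caused trouble, I would fall back on $C^{1,\alpha}$ interior estimates for the rescaled anisotropic equation, which is more delicate because the rescaled operator degenerates as $\varepsilon\downarrow0$.
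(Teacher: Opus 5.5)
Your proposal is correct and follows essentially the same route as the paper. Both arguments use the profile limit from Theorem~\ref{profileprop:thin-domain} and concavity of $\log u_\varepsilon$ to upgrade value convergence to gradient convergence. Both use evenness in $z$ to remove the normal component on $D\times\{0\}$, and both compare the $O(t^{p-1})$ bound near the maximum point of $G_D$ against the order-$t$ one-dimensional term, which is where $p>2$ enters. Both finish with continuity of $Q_{I_R}$ in $R$.

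The differences are minor. The paper gets only pointwise gradient convergence on the slice $D\times\{0\}$, via one-sided difference quotients, rather than invoking Rockafellar's theorem on all of $D\times J$. It uses the expansion $2Q_{I_{R_D}}(t/2)=\lambda_{1,p}(I_{R_D})t+o(t)$, where your bound $Q_{I_0}(t/2)\ge\lambda_{1,p}(I_0)t/2$ is slightly cleaner. For log-concavity on the nonsmooth domain $\Omega_\varepsilon$, it cites the bounded-convex result \cite{ChenHauer2026} rather than Sakaguchi, whose smooth-domain statement would need a routine approximation step.
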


The proof uses the thin-domain limit from
Theorem~\ref{profileprop:thin-domain}. After normalizing
\(u_\varepsilon(x_D,0)=1\), the rescaled eigenfunctions converge locally
uniformly, and in particular
\[
u_\varepsilon(x,0)
\longrightarrow
\left(
\frac{G_D(x)}{G_D(x_D)}
\right)^{2/p}
\]
locally uniformly in \(D\). The concavity of
\(\log u_\varepsilon(\cdot,0)\) then gives convergence of the tangential
logarithmic gradients on \(D\times\{0\}\). Since
\(\nabla G_D(x_D)=0\), the limiting logarithmic \(p\)-flux at
\(x_D+te\) is \(O(t^{p-1})\) as \(t\downarrow0\). On the other hand,
\[
2Q_{I_{R_D}}\!\left(\frac t2\right)
=
\lambda_{1,p}(I_{R_D})t+o(t).
\]
Thus, for every sufficiently small fixed \(t>0\), the limiting flux violates the one-dimensional
two-point estimate, and the same strict inequality holds on
\(\Omega_\varepsilon\) for all sufficiently small \(\varepsilon>0\).

\smallskip

We next compare the Hessian of a function \(w\) with the symmetric
differential of $|\nabla w|^{p-2}\nabla w.$
Let \(U\subset\mathbb R^N\) be open and let \(w\in C^2(U)\). Set
\[
s:=|\nabla w|,
\qquad
A:=|\nabla w|^{p-2}\nabla w,
\qquad
\mathcal R_w:=\{x\in U:\nabla w(x)\neq0\}.
\]
For \(x\in\mathcal R_w\), let
\[
\nu:=\frac{\nabla w}{s},
\qquad
\Sigma_x:=\{y\in U:w(y)=w(x)\}.
\]
Denote by \(T_x\Sigma_x\) the tangent space of \(\Sigma_x\), by
\[
\mathrm{II}(\tau,\eta)
:=
\langle D_\tau\nu,\eta\rangle,
\qquad
\tau,\eta\in T_x\Sigma_x,
\]
its second fundamental form, and by \(\nabla_\Sigma s\) the tangential
gradient of \(s\), so that
\[
\nabla s
=
\nabla_\Sigma s+(\partial_\nu s)\nu.
\]

\begin{proposition}
\label{prop:level-set-flux-decomposition}
Let \(x\in\mathcal R_w\). With respect to the orthogonal decomposition $\mathbb R^N
=
T_x\Sigma_x\oplus\operatorname{span}\{\nu\},$
one has
\[
D^2w=
\begin{pmatrix}
s\,\mathrm{II} & \nabla_\Sigma s\\
(\nabla_\Sigma s)^T & \partial_\nu s
\end{pmatrix}.
\]
Moreover, if
\[
M_p[w]
:=
\frac12\bigl(DA+(DA)^T\bigr),
\]
where \(DA\) denotes the Jacobian matrix of
\(A=|\nabla w|^{p-2}\nabla w\), then
\[
M_p[w]
=
s^{p-2}
\begin{pmatrix}
s\,\mathrm{II} & \dfrac p2\nabla_\Sigma s\\[2mm]
\dfrac p2(\nabla_\Sigma s)^T & (p-1)\partial_\nu s
\end{pmatrix}.
\]
Here \(\mathrm{II}\) is viewed as a symmetric bilinear form on
\(T_x\Sigma_x\), \(\nabla_\Sigma s\in T_x\Sigma_x\), and $\partial_\nu s:=\langle\nabla s,\nu\rangle.$
\end{proposition}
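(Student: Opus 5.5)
The plan is a direct computation at a fixed point \(x\in\mathcal R_w\). Write \(\nabla w=s\nu\) near \(x\); this is valid on the open set \(\mathcal R_w\), where \(s>0\) is \(C^1\) and \(\nu\) is \(C^1\). Differentiating \(\nabla w=s\nu\) in a direction \(\xi\) gives
\[
D^2w\,\xi=(\partial_\xi s)\,\nu+s\,D_\xi\nu .
\]
Since \(|\nu|=1\), the vector \(D_\xi\nu\) is orthogonal to \(\nu\). We then read off the blocks of \(D^2w\) in the decomposition \(T_x\Sigma_x\oplus\operatorname{span}\{\nu\}\).

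For \(\tau,\eta\in T_x\Sigma_x\) we get \(\langle D^2w\,\tau,\eta\rangle=s\langle D_\tau\nu,\eta\rangle=s\,\mathrm{II}(\tau,\eta)\). This uses \(\langle\nu,\eta\rangle=0\), and it identifies \(\nu=\nabla w/|\nabla w|\) as the unit normal field of the level sets, so \(D_\tau\nu\) restricted to tangent vectors is exactly the second fundamental form. Next, \(\langle D^2w\,\tau,\nu\rangle=\partial_\tau s\), because \(D_\tau\nu\perp\nu\). This equals \(\langle\nabla_\Sigma s,\tau\rangle\), which gives the off-diagonal block. Finally, \(\langle D^2w\,\nu,\nu\rangle=\partial_\nu s\). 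Symmetry of \(D^2w\) makes the matrix consistent. In particular \(\langle D_\nu\nu,\tau\rangle=s^{-1}\partial_\tau s\), but we will not need this; it is only a consistency check.

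For \(M_p[w]\), write \(A=g(s)\nabla w\) with \(g(s)=s^{p-2}\). Then
\[
DA=s^{p-2}D^2w+(p-2)s^{p-3}\,\nabla w\otimes\nabla s=s^{p-2}\bigl(D^2w+(p-2)\,\nu\otimes\nabla s\bigr),
\]
where \((\nu\otimes\nabla s)\xi=\langle\nabla s,\xi\rangle\nu\). Decompose \(\nu\otimes\nabla s\) into blocks. Its tangential–tangential block is zero. Its \((\nu,\tau)\)-entry is \(\partial_\tau s\), and its \((\tau,\nu)\)-entry is \(0\). Its \((\nu,\nu)\)-entry is \(\partial_\nu s\). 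Symmetrizing, the rank-one term contributes \(\tfrac{p-2}{2}\nabla_\Sigma s\) to each off-diagonal block and \((p-2)\partial_\nu s\) to the normal–normal entry. Adding these to the blocks of \(D^2w\) gives off-diagonal \(\bigl(1+\tfrac{p-2}{2}\bigr)\nabla_\Sigma s=\tfrac p2\nabla_\Sigma s\) and normal entry \((p-1)\partial_\nu s\). The tangential block stays \(s\,\mathrm{II}\). Multiplying by \(s^{p-2}\) yields the claimed formula.

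There is no real obstacle. The only points needing care are the index convention for the Jacobian \(DA\), i.e. which slot of \(\nu\otimes\nabla s\) lands in the \((\nu,\tau)\) position, and noting that symmetrization makes this convention irrelevant. One should also remark that \(s\) and \(\nu\) are \(C^1\) on \(\mathcal R_w\) because \(w\in C^2\), so every derivative used above exists.
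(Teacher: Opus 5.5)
Your proposal is correct and follows essentially the same route as the paper: you differentiate $\nabla w=s\nu$, use $\langle D_\tau\nu,\nu\rangle=0$ to read off the blocks of $D^2w$, then write $DA=s^{p-2}D^2w+(p-2)s^{p-3}\nabla w\otimes\nabla s$ and symmetrize the rank-one term blockwise. Your extra remarks — that $s$ and $\nu$ are $C^1$ on $\mathcal R_w$, and that symmetrization makes the Jacobian index convention irrelevant — are accurate and only add care that the paper leaves implicit.
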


The preceding formulas give the following special cases.

\begin{corollary}
\label{cor:rigid-flux-cases}
For every \(x\in\mathcal R_w\), the following statements hold.
\begin{enumerate}[label=\textnormal{(\roman*)}]
\item If \(p=2\), then $M_2[w]=D^2w.$

\item If \(N=1\), then $M_p[w]
=
(p-1)|w'|^{p-2}w'',$
and hence, $
M_p[w]\geq0\Longleftrightarrow
w''\geq0.$

\item If \(\nabla_\Sigma s=0\), then
\[
D^2w=
\begin{pmatrix}
s\,\mathrm{II} & 0\\
0 & \partial_\nu s
\end{pmatrix},
\qquad
M_p[w]
=
s^{p-2}
\begin{pmatrix}
s\,\mathrm{II} & 0\\
0 & (p-1)\partial_\nu s
\end{pmatrix},
\]
and consequently $D^2w\geq0 \Longleftrightarrow
M_p[w]\geq0.$
\end{enumerate}
\end{corollary}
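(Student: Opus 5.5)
Each of the three items follows by specializing the block formulas of Proposition~\ref{prop:level-set-flux-decomposition}; we treat them in order.

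For (i), we put \(p=2\) into the formula for \(M_p[w]\). Then \(s^{p-2}=1\), the off-diagonal factor \(p/2\) equals \(1\), and the corner factor \(p-1\) equals \(1\). The resulting matrix coincides block by block with the decomposition of \(D^2w\), so \(M_2[w]=D^2w\). One can also see this directly: for \(p=2\) we have \(A=\nabla w\), so \(DA=D^2w\), which is already symmetric.

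For (ii), we compute directly rather than through the level-set decomposition, because for \(N=1\) the tangent space \(T_x\Sigma_x\) is trivial. Here \(A=|w'|^{p-2}w'\). On \(\mathcal R_w\) we have \(w'\neq0\), so \(A\) is locally \(\pm|w'|^{p-1}\) and differentiates to
\[
A'=(p-1)|w'|^{p-2}w''.
\]
The \(1\times1\) matrix \(M_p[w]\) is simply \(A'\). This agrees with the proposition, whose matrix in this case reduces to its lower-right entry \(s^{p-2}(p-1)\partial_\nu s\) with \(\partial_\nu s=w''\). The equivalence \(M_p[w]\ge0\Leftrightarrow w''\ge0\) then follows because \(p-1>0\) and \(|w'|^{p-2}>0\) on \(\mathcal R_w\).

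For (iii), setting \(\nabla_\Sigma s=0\) kills the off-diagonal blocks in both formulas of the proposition, which gives the two block-diagonal matrices as stated. A block-diagonal symmetric matrix is positive semidefinite exactly when each diagonal block is.
\begin{itemize}
\item Since \(s^{p-2}>0\) on \(\mathcal R_w\), the condition \(M_p[w]\ge0\) is equivalent to \(s\,\mathrm{II}\ge0\) on \(T_x\Sigma_x\) together with \((p-1)\partial_\nu s\ge0\).
\item Since \(p-1>0\), this is in turn equivalent to \(s\,\mathrm{II}\ge0\) together with \(\partial_\nu s\ge0\).
\item These are precisely the two diagonal-block conditions for \(D^2w\ge0\).
\end{itemize}
No step here is a real obstacle. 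The only point needing care is that in (ii) the level-set language degenerates, which is why we verify that case by direct computation.
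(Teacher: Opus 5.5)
Your proposal is correct and follows the paper's proof essentially step by step. Item (i) comes from $A=\nabla w$, item (ii) from differentiating $A=|w'|^{p-2}w'$ directly using $w'\neq0$, and item (iii) from specializing the block formulas of Proposition~\ref{prop:level-set-flux-decomposition} and using $s>0$ and $p-1>0$.
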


For general \(w\), positivity of \(M_p[w]\) always implies convexity at
regular points, while the converse for every \(w\) characterizes
\(p=2\).

\begin{theorem}
\label{thm:p2-flux-rigidity}
Let \(U\subset\mathbb R^N\) be open, \(N\ge2\), \(p>1\), and
\(w\in C^2(U)\). At every point \(x\in U\) with \(\nabla w(x)\neq0\),
\[
M_p[w](x)\ge0
\quad\Longrightarrow\quad
D^2w(x)\ge0.
\]
The converse implication holds for all such \(U,w,x\) if and only if
\(p=2\).
\end{theorem}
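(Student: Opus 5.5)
The argument rests on the block decomposition of Proposition~\ref{prop:level-set-flux-decomposition}. Fix \(x\) with \(s=|\nabla w(x)|>0\) and write a vector as \(\xi=\tau+\alpha\nu\) with \(\tau\in T_x\Sigma_x\) and \(\alpha\in\mathbb R\). Then
\[
\langle D^2w\,\xi,\xi\rangle=s\,\mathrm{II}(\tau,\tau)+2\alpha\langle\nabla_\Sigma s,\tau\rangle+\alpha^2\partial_\nu s,
\]
and
\[
s^{2-p}\langle M_p[w]\xi,\xi\rangle=s\,\mathrm{II}(\tau,\tau)+p\,\alpha\langle\nabla_\Sigma s,\tau\rangle+(p-1)\alpha^2\partial_\nu s.
\]
For the forward implication, the idea is to realize the Hessian form as a positive multiple of the \(M_p\) form evaluated at a rescaled vector. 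Put \(\xi'=\tau+c\alpha\nu\). Its \(M_p\)-form is
\[
s\,\mathrm{II}(\tau,\tau)+pc\,\alpha\langle\nabla_\Sigma s,\tau\rangle+(p-1)c^2\alpha^2\partial_\nu s .
\]
Matching the cross term forces \(c=2/p\), which gives
\[
s^{2-p}\langle M_p\xi',\xi'\rangle=\langle D^2w\,\xi,\xi\rangle+\Bigl(\tfrac{4(p-1)}{p^2}-1\Bigr)\alpha^2\partial_\nu s
=\langle D^2w\,\xi,\xi\rangle-\tfrac{(p-2)^2}{p^2}\,\alpha^2\partial_\nu s .
\]
Hence
\[
\langle D^2w\,\xi,\xi\rangle=s^{2-p}\langle M_p\xi',\xi'\rangle+\tfrac{(p-2)^2}{p^2}\alpha^2\partial_\nu s .
\]
If \(M_p\ge0\), testing on \(\nu\) gives \((p-1)s^{p-2}\partial_\nu s\ge0\), so \(\partial_\nu s\ge0\). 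Both terms on the right are then nonnegative, and \(D^2w\ge0\) follows.

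For the converse when \(p=2\), the statement is immediate from Corollary~\ref{cor:rigid-flux-cases}(i), since \(M_2[w]=D^2w\).

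For \(p\ne2\), I would construct a counterexample, and the only real task is choosing the right \(w\). The plan is to take a convex quadratic \(w(y)=\langle b,y\rangle+\tfrac12\langle Hy,y\rangle\) in two variables (extended trivially in the remaining variables, which gives zero diagonal entries in the extra directions and keeps all determinant conditions intact). Take \(b=\nu=e_2\) and evaluate at \(x=0\), so that \(s=1\) and \(T_x\Sigma_x=\operatorname{span}\{e_1\}\). The Hessian reading of Proposition~\ref{prop:level-set-flux-decomposition} then identifies the entries: \(s\,\mathrm{II}=a\), \(\nabla_\Sigma s=\beta\), and \(\partial_\nu s=\gamma\), where \(H=\begin{pmatrix}a&\beta\\ \beta&\gamma\end{pmatrix}\). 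The Hessian is positive semidefinite iff \(a,\gamma\ge0\) and \(a\gamma\ge\beta^2\). The matrix \(M_p\), up to the factor \(s^{p-2}=1\), is \(\begin{pmatrix}a&p\beta/2\\ p\beta/2&(p-1)\gamma\end{pmatrix}\), with determinant \((p-1)a\gamma-\tfrac{p^2}{4}\beta^2\). Choose \(a\gamma=\beta^2\) with \(\beta\ne0\), for instance \(a=\gamma=\beta=1\). Then \(D^2w\ge0\), while
\[
\det M_p=\Bigl(p-1-\tfrac{p^2}{4}\Bigr)\beta^2=-\tfrac{(p-2)^2}{4}\beta^2<0,
\]
so \(M_p[w](0)\) has a negative eigenvalue. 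This shows the converse fails for every \(p\ne2\).

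The main obstacle is a careful verification, directly from \(A=|\nabla w|^{p-2}\nabla w\), that the quadratic example produces exactly these block entries. The direct computation is \(DA=s^{p-2}\bigl(I+(p-2)\nu\nu^T\bigr)D^2w\), and symmetrizing it reproduces the stated matrix. This check avoids depending on any sign conventions in the definition of \(\mathrm{II}\).
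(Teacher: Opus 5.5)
Your proposal is correct and follows the paper's proof. The forward direction is the paper's test vector with the normal component rescaled by $2/p$; your remainder $\frac{(p-2)^2}{p^2}\alpha^2\partial_\nu s$ is just an explicit form of the paper's use of $\frac{4(p-1)}{p^2}\le 1$ together with $\partial_\nu s\ge 0$. Your counterexample is the paper's quadratic in the boundary case $c=|b|^2$. The only difference is that the paper chooses $|b|^2<c<\frac{p^2}{4(p-1)}|b|^2$, so that $D^2w(0)$ is positive definite, a slightly stronger failure of the converse than your degenerate Hessian; both suffice for the theorem as stated.
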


We now apply these formulas to the first eigenfunction of
\eqref{eq:mainequation}. Let \(u>0\) be the first eigenfunction on a
convex domain \(\Omega\subset\mathbb R^N\) and set $w:=-\log u.$
Then $|\nabla w|^{p-2}\nabla w=-X_\Omega.$
By the interior \(C^{1,\alpha}\)-regularity for \(p\)-Laplace equations
\cite{Tolksdorf1984}, $u\in C^{1,\alpha}_{\mathrm{loc}}(\Omega).$
On
\[
\mathcal R_w=\{x\in\Omega:\nabla w(x)\neq0\},
\]
one has \(\nabla u\neq0\), and the equation is locally uniformly
elliptic. Iterating the interior Schauder estimates
\cite[Chapter~6]{GilbargTrudinger} gives $u,w\in C^\infty_{\mathrm{loc}}(\mathcal R_w).$
Corollary~\ref{cor:rigid-flux-cases} agrees with the positive results in
\cite[Theorem~1.5]{AndrewsClutterbuck2011} and
Theorem~\ref{thm:one-dimensional-two-point-estimate}. It also applies
directly to balls. If $w(x)=W(|x-x_0|),$
then $s=|\nabla w|=|W'|$
is constant on each spherical level set, and hence $\nabla_\Sigma s=0.$

\smallskip

Finally, we mention a consequence for the fundamental gap problem. In the
linear case, the approach of Andrews and Clutterbuck first obtains a sharp
modulus of concavity for \(\log u_1\) and then applies it to the quotient
\(v=u_2/u_1\), which satisfies the linear drift equation
\[
-\Delta v-2\nabla\log u_1\cdot\nabla v
=(\lambda_{2,2}(\Omega)-\lambda_{1,2}(\Omega))v.
\]
Our results indicate that this approach does not extend directly to the
\(p\)-Laplacian when \(p\neq2\). First, the corresponding sharp
one-dimensional modulus for the logarithmic \(p\)-flux fails on general
convex domains. More basically, the nonlinear eigenvalue equation does not
admit the analogous quotient reduction: \(u_2/u_1\) no longer satisfies a
closed linear equation whose drift is determined by \(u_1\). This suggests
that the \(p\)-Laplacian fundamental gap requires a different approach; see
\cite{ChenHauer2026} for a variational treatment of this problem.

\smallskip
The main results are summarized in the following table.

\begin{table}[H]
\centering
\begin{tabular}{|c|c|c|c|}
\hline
 & \(1<p<2\) & \(p=2\) & \(p>2\) \\ \hline
\(N=1\) & Holds & Holds & Holds \\ \hline
\(N\geq2\) & Fails in general & Holds & Fails in general \\ \hline
\end{tabular}
\caption{Validity of the sharp one-dimensional two-point estimate for
\(X_\Omega\): it holds for all \(p>1\) in dimension \(N=1\), and for
\(p=2\) in dimensions \(N\ge2\); it fails in general for \(N\ge2\) and
\(p\neq2\). On balls, it holds for every \(p>1\).}
\label{tab:intro-modulus-summary}
\end{table}

The remainder of the paper is organized as follows. Section~2 establishes the sharp two-point modulus estimate on intervals and balls, covering the one-dimensional case and the higher-dimensional radial case. Section~3 proves the failure of the one-dimensional modulus in \(N\geq2\) for \(p\neq2\), with separate arguments for \(1<p<2\) and \(p>2\); the latter relies on thin-domain asymptotics for arbitrary bounded convex domains \(D\subset\mathbb R^{N-1}\). Section~4 analyzes the Hessian of \(-\log u\) and the symmetric differential of the logarithmic \(p\)-flux along level sets, and proves that \(p=2\) is the unique exponent for which the positivity of two matrices is universally equivalent.

\section{Sharp Two-Point Estimates on Intervals and Balls}

In this section, we establish the two-point modulus estimate first on the one-dimensional interval and then on higher-dimensional balls. For the interval, the estimate follows directly from the scalar equation satisfied by the logarithmic \(p\)-flux. On the ball, radial symmetry again reduces the problem to a one-dimensional flux equation, but with an additional geometric term. We derive the monotonicity and convexity properties of the radial flux in Lemma \ref{lem:Q-convexity}, obtain an intrinsic two-point estimate on the ball in Proposition \ref{prop:intrinsic}, and then compare the ball with the one-dimensional model.

\subsection{The one-dimensional sharp estimate}

We first give the proof of the one-dimensional two-point modulus estimate.

\begin{proof}[\textbf{Proof of Theorem~\ref{thm:one-dimensional-two-point-estimate}.}]
Since \(u_I\) is even and strictly decreasing on \((0,R)\), \(X_I\) is odd and
\[
X_I'
=
-\lambda_{1,p}(I)-(p-1)|X_I|^{p/(p-1)}<0 \quad \text{on}\quad(0,R). 
\]
Thus \(g(t):=-X_I'(t)\) is even and strictly increasing as a function of \(|t|\) on \((0,R)\). Assume \(x<y\), and set \(m:=\frac{x+y}{2}\) and \(r:=\frac{y-x}{2}\). Then
\[
X_I(x)-X_I(y)
=
\int_{m-r}^{m+r}g(t)\,dt.
\]
For fixed \(r\), define \(F_r(m):=\int_{m-r}^{m+r}g(t)\,dt\). Since \(g\) is even, \(F_r(-m)=F_r(m)\), while for \(m>0\),
\[
F_r'(m)
=
g(m+r)-g(m-r)
=
g(m+r)-g(|m-r|)>0.
\]
Hence \(F_r\) has its unique minimum at \(m=0\). Therefore
\[
X_I(x)-X_I(y)
\ge
\int_{-r}^{r}g(t)\,dt
=
X_I(-r)-X_I(r)
=
-2X_I(r)
=
2Q_I(r).
\]
Moreover, equality holds if and only if \(m=0\), namely \(x+y=0\). The case \(y<x\) follows by interchanging \(x\) and \(y\).
\end{proof}

\subsection{The sharp radial estimate on balls}

The radial structure of the ball reduces the logarithmic \(p\)-flux to a one-dimensional equation with an additional geometric term. The following lemma records the resulting ODE and its precise behavior near the origin.

\begin{lemma}\label{lem:Q-ode}
Let \(p'=p/(p-1)\). Then
\begin{equation}\label{eq:Q-ode}
Q_B'(r)+\frac{N-1}{r}Q_B(r)
=\lambda_{1,p}(B_R)+(p-1)Q_B(r)^{p'},
\end{equation}
and $\lim_{r\downarrow0}\frac{Q_B(r)}r
=\frac{\lambda_{1,p}(B_R)}{N}.$
More precisely,
\[
Q_B(r)=\frac{\lambda_{1,p}(B_R)}{N}r
+\frac{p-1}{N+p'}\left(\frac{\lambda_{1,p}(B_R)}{N}\right)^{p'}r^{p'+1}
+o(r^{p'+1})
\qquad \text{as}\quad r\downarrow0.
\]
\end{lemma}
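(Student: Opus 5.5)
Starting from the radial equation and then passing to a first-order ODE for $Q_B$ is the natural route. For $u_B(x)=U_B(r)$ one has $\Delta_p u_B = r^{1-N}\bigl(r^{N-1}|U_B'|^{p-2}U_B'\bigr)'$. Since $U_B'<0$ on $(0,R)$, we get $|U_B'|^{p-2}U_B' = -U_B^{p-1}Q_B$, and the equation reads
\[
\bigl(r^{N-1}U_B^{p-1}Q_B\bigr)' = \lambda_{1,p}(B_R)\,r^{N-1}U_B^{p-1}.
\]
Expanding the derivative and using $(U_B^{p-1})' = -(p-1)U_B^{p-1}q_B$ with $q_B = Q_B^{1/(p-1)}$ (so $q_B Q_B = Q_B^{p'}$), then dividing by $r^{N-1}U_B^{p-1}>0$, gives \eqref{eq:Q-ode} on $(0,R)$. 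For the regularity this needs I would use that $U_B\in C^1$, that $U_B'<0$ there, and that $|U_B'|^{p-2}U_B'$ is $C^1$ on $(0,R)$ by the integrated form of the radial equation. This last point is exactly what the integral identity below provides.

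For the behaviour at the origin, I would integrate the divergence identity from $0$, using $Q_B(0)=0$ and the boundedness of $U_B$:
\[
r^{N-1}U_B(r)^{p-1}Q_B(r) = \lambda_{1,p}(B_R)\int_0^r s^{N-1}U_B(s)^{p-1}\,ds .
\]
Since $U_B$ is continuous with $U_B(0)=1$, the right side is $\frac{\lambda_{1,p}(B_R)}{N}r^N(1+o(1))$. This gives $Q_B(r)/r\to\lambda_{1,p}(B_R)/N$ and, incidentally, justifies $U_B'(0)=0$.

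For the second-order term, write $\lambda=\lambda_{1,p}(B_R)$ and $Q_B(r)=\frac{\lambda}{N}r+h(r)$ with $h=o(r)$. Equation \eqref{eq:Q-ode} is equivalent to $(r^{N-1}Q_B)' = r^{N-1}\bigl(\lambda+(p-1)Q_B^{p'}\bigr)$, so, integrating from $0$ (the boundary term vanishes because $r^{N-1}Q_B\to0$),
\[
h(r) = \frac{p-1}{r^{N-1}}\int_0^r s^{N-1}Q_B(s)^{p'}\,ds .
\]
Since $Q_B(s)^{p'} = \bigl(\frac{\lambda}{N}\bigr)^{p'}s^{p'}(1+o(1))$, we obtain
\[
h(r) = (p-1)\Bigl(\frac{\lambda}{N}\Bigr)^{p'}\frac{r^{p'+1}}{N+p'}\,(1+o(1)),
\]
which is the claimed expansion.

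The main obstacle is a careful justification of the integration at $r=0$, where $p'$ may be non-integer and $Q_B$ is only known a priori to be continuous at $0$. The integral identities above avoid differentiating at the origin, so the only inputs needed are continuity of $Q_B$ at $0$ and $Q_B(0)=0$, both of which are recorded in Notation~\ref{not:ball-model}.
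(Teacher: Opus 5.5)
Your proposal is correct and follows essentially the paper's route. You divide the radial equation by $r^{N-1}U_B^{p-1}$ to get the ODE, integrate $(r^{N-1}Q_B)'=r^{N-1}\bigl(\lambda_{1,p}(B_R)+(p-1)Q_B^{p'}\bigr)$ from $0$, and substitute the first-order asymptotics back into the integral. The only cosmetic difference is that you obtain the limit of $Q_B(r)/r$ from the $U_B$-weighted first integral, whereas the paper bounds $r^{-N}\int_0^r s^{N-1}Q_B^{p'}\,ds$ by $\frac1N\sup_{[0,r]}Q_B^{p'}$.
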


\begin{proof}
The radial equation is
\[
-\bigl(r^{N-1}|U_B'|^{p-2}U_B'\bigr)'
=\lambda_{1,p}(B_R) r^{N-1}U_B^{p-1}.
\]
Since \(U_B'=-q_BU_B\) and $|U_B'|^{p-2}U_B'=-Q_BU_B^{p-1},$
division by \(r^{N-1}U_B^{p-1}\) gives \eqref{eq:Q-ode}. Multiplication by \(r^{N-1}\) and integration from \(0\) to \(r\) yield
\[
Q_B(r)=\frac{\lambda_{1,p}(B_R)}{N}r
+(p-1)r^{1-N}\int_0^rs^{N-1}Q_B(s)^{p'}\,ds.
\]
Since \(Q_B(0)=0\) and \(Q_B\) is continuous,
\[
0\le r^{-N}\int_0^r s^{N-1}Q_B(s)^{p'}\,ds
\le \frac1N\sup_{0\le s\le r}Q_B(s)^{p'}\to0,
\]
which gives $\lim_{r\downarrow0}\frac{Q_B(r)}r
=\frac{\lambda_{1,p}(B_R)}N.$
Substituting
\(Q_B(s)=(\lambda_{1,p}(B_R)/N)s+o(s)\)
into the integral identity yields the stated expansion.
\end{proof}

The following lemma records the key monotonicity and convexity properties of the radial logarithmic \(p\)-flux on the ball.

\begin{lemma}\label{lem:Q-convexity}
For \(0<r<R\),
\[
Q_B'(r)>0,
\qquad
H_B(r):=Q_B'(r)-\frac{Q_B(r)}{r}>0,
\]
and
\[
K_B(r):=
Q_B''(r)-\frac{2}{r}
\left(
Q_B'(r)-\frac{Q_B(r)}{r}
\right)>0.
\]
In particular, $Q_B''>0$ and $\left(\frac{Q_B}{r}\right)'>0$ on \((0,R)\).
\end{lemma}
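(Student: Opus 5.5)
The plan is to work with the auxiliary quantities \(H_B\) and \(L_B:=H_B/r\), and to use \eqref{eq:Q-ode} to derive first-order linear equations for them. Each sign is then proved by a first-contact argument, started from the small-\(r\) behaviour recorded in Lemma~\ref{lem:Q-ode}. Throughout write \(\lambda:=\lambda_{1,p}(B_R)\), \(a:=\lambda/N\), and \(Q:=Q_B\).

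\textbf{Step 1: the equation for \(H_B\).} By \eqref{eq:Q-ode},
\[
H_B=Q'-\frac{Q}{r}=\lambda+(p-1)Q^{p'}-\frac{N Q}{r}.
\]
Differentiating, and using \((p-1)p'=p\), gives
\[
H_B'+\frac{N}{r}H_B=pQ^{p'-1}Q'=pQ^{p'-1}\Bigl(H_B+\frac{Q}{r}\Bigr).
\]

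\textbf{Step 2: \(H_B>0\), hence \(Q'>0\).} From the expansion in Lemma~\ref{lem:Q-ode},
\[
H_B(r)=p'\,\frac{p-1}{N+p'}\,a^{p'}r^{p'}+o(r^{p'})>0 \qquad\text{for small } r.
\]
To make this rigorous I would rather use the integral identity from that proof; alternatively, the sign also follows from the averaging argument of Step~3. If \(r_0\in(0,R)\) were a first zero of \(H_B\), then \(H_B'(r_0)\le0\). But Step~1 gives \(H_B'(r_0)=pQ^{p'}(r_0)/r_0>0\), a contradiction. So \(H_B>0\), and therefore \(Q'=H_B+Q/r>0\) and \((Q/r)'=H_B/r>0\).

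\textbf{Step 3: \(K_B>0\).} Since \((Q/r)'=L_B\), we have \(Q''=H_B'+L_B\), and hence
\[
K_B=H_B'-\frac{H_B}{r}=rL_B'.
\]
So it suffices to show \(L_B'>0\). Substituting \(H_B=rL_B\) into Step~1 gives
\[
\bigl(r^{N+1}L_B\bigr)'=r^Ng(r),\qquad g:=pQ^{p'-1}Q'=(p-1)\bigl(Q^{p'}\bigr)'>0.
\]
The boundary term at \(0\) vanishes because \(L_B=O(r^{p'-1})\). Thus
\[
L_B(r)=r^{-N-1}\int_0^r s^Ng(s)\,ds,
\]
and \(L_B'(r)>0\) is equivalent to
\[
g(r)>\frac{N+1}{r^{N+1}}\int_0^r s^Ng(s)\,ds,
\]
that is, \(g(r)\) strictly exceeds its weighted average over \((0,r)\).

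\emph{Near \(0\).} Equation \eqref{eq:Q-ode} gives \(Q'(r)\to\lambda-(N-1)a=a\), so \(g(s)=pa^{p'}s^{p'-1}(1+o(1))\). The weighted average is then
\[
\frac{N+1}{N+p'}\,pa^{p'}r^{p'-1}(1+o(1)).
\]
Since \(p'>1\), the factor \((N+1)/(N+p')<1\), so the inequality holds on some \((0,\delta)\).

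\emph{First contact.} Let \(r_1\) be the first point with \(L_B'(r_1)=0\). On \((0,r_1]\) we have
\[
Q''=rL_B'+2L_B>0,
\]
so \(Q'\) is positive and increasing there, and \(Q^{p'-1}\) is strictly increasing. Hence \(g\) is strictly increasing on \((0,r_1]\), so \(g(r_1)\) strictly exceeds its weighted average. This gives \(L_B'(r_1)>0\), a contradiction. Therefore \(K_B>0\) on \((0,R)\), and then \(Q''=K_B+2H_B/r>0\).

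The main obstacle is Step~3. The sign of \(K_B\) is not visible directly from the ODE, and the natural monotonicity of \(g\) needs \(Q''>0\), which is what is being proved. The circularity is resolved by the first-contact argument on \(L_B'\), and by getting the small-\(r\) start from the limit \(Q'(0^+)=a\) alone, which avoids differentiating the \(o(\cdot)\) remainder in Lemma~\ref{lem:Q-ode}.
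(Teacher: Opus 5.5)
Your proof is correct. It reaches the two first-order signs essentially as the paper does, but proves $K_B>0$ by a genuinely different mechanism.

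\textbf{First-order signs.} You reverse the paper's order.
\begin{itemize}
\item The paper first shows $Q_B'>0$ by a first-zero argument, since at a first zero $Q_B''(r_0)=(N-1)Q_B(r_0)/r_0^2>0$. It then obtains $H_B>0$ from $r^NH_B=p\int_0^r s^Nq_BQ_B'\,ds$.
\item You run the first-zero argument directly on the $H_B$-equation, where $H_B'(r_0)=pQ_B(r_0)^{p'}/r_0>0$, and then deduce $Q_B'=H_B+Q_B/r>0$.
\end{itemize}
Your hedge about rigor near $0$ is unnecessary. Since $H_B=\lambda+(p-1)Q_B^{p'}-NQ_B/r$ is algebraic in $Q_B$, the expansion of Lemma~\ref{lem:Q-ode} can be substituted directly, and no remainder has to be differentiated.

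\textbf{The sign of $K_B$.}
\begin{itemize}
\item The paper differentiates once more to get the explicit equation \eqref{eq:K-ode}. Its forcing terms $\frac{p}{p-1}q_B^{2-p}(Q_B')^2+\frac{2p}{r}q_BH_B$ are manifestly positive. The first-zero argument is started from the second-order expansions of $Q_B'$ and $Q_B''$ at the origin.
\item You write $K_B=rL_B'$ with $L_B=r^{-N-1}\int_0^r s^Ng\,ds$, so that $K_B>0$ says $g$ exceeds its weighted mean on $(0,r)$. Near $0$ this needs only the leading behaviour $Q_B\sim ar$ and $Q_B'\to a$. The global step is a monotonicity bootstrap through $Q_B''=rL_B'+2L_B$.
\end{itemize}

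The two mechanisms rest on the same identity. From $K_B=g-(N+1)L_B$ one gets $(r^{N+1}K_B)'=r^{N+1}g'$. Expanding $g'=\frac{p}{p-1}q_B^{2-p}(Q_B')^2+pq_B\bigl(K_B+2H_B/r\bigr)$ gives exactly \eqref{eq:K-ode}. The paper uses this identity pointwise at a zero of $K_B$; you use it in integrated form.

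Your route avoids the third-order computation and the refined asymptotics at the origin. It also makes transparent why $K_B>0$: it is the mean of an increasing function. The paper's route is a purely local sign check that needs no bootstrap on the monotonicity of $g$.
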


\begin{proof}
Differentiating \eqref{eq:Q-ode} gives
\begin{equation}\label{eq:Q-second}
Q_B''
=
p q_B Q_B'
-\frac{N-1}{r}
\left(
Q_B'-\frac{Q_B}{r}
\right).
\end{equation}
By Lemma~\ref{lem:Q-ode}, \(Q_B'>0\) near \(0\). Suppose that \(Q_B'\) vanishes somewhere, and let \(r_0\) be its first zero. Then \(Q_B'(r)>0\) for \(0<r<r_0\) and \(Q_B'(r_0)=0\), hence necessarily \(Q_B''(r_0)\le0\). On the other hand,
\[
Q_B''(r_0)
=
\frac{N-1}{r_0^2}Q_B(r_0)>0,
\]
which is a contradiction. Therefore \(Q_B'(r)>0\) for all \(0<r<R\). By \eqref{eq:Q-second}, $H_B'+\frac{N}{r}H_B=pq_BQ_B'.$
Thus
\[
r^N H_B(r)
=
p\int_0^r s^N q_B(s)Q_B'(s)\,ds>0.
\]
Let
\[
a:=\frac{\lambda_{1,p}(B_R)}{N},
\qquad
b:=\frac{p-1}{N+p'}a^{p'}.
\]
Using Lemma~\ref{lem:Q-ode} together with \eqref{eq:Q-ode} and \eqref{eq:Q-second}, we obtain
\[
Q_B'(r)
=
a+(p'+1)br^{p'}+o(r^{p'}),
\qquad
Q_B''(r)
=
p'(p'+1)br^{p'-1}+o(r^{p'-1}).
\]
Hence
\[
K_B(r)
=
p'(p'-1)b\,r^{p'-1}
+o(r^{p'-1})>0
\]
for all sufficiently small \(r>0\). Since $K_B=H_B'-\frac{H_B}{r},$
\begin{equation}\label{eq:K-ode}
K_B'
=
\frac{p}{p-1}q_B^{2-p}(Q_B')^2
+p q_B K_B
+\frac{2p}{r}q_BH_B
-\frac{N+1}{r}K_B.
\end{equation}
If \(r_1\) were the first zero of \(K_B\), then \(K_B'(r_1)\le0\), whereas \eqref{eq:K-ode} gives \(K_B'(r_1)>0\). This contradiction proves \(K_B>0\). Finally,
\[
Q_B''
=
K_B+\frac{2H_B}{r}>0,
\qquad
\left(\frac{Q_B}{r}\right)'
=
\frac{H_B}{r}>0,
\]
which completes the proof.
\end{proof}

The preceding properties of \(Q_B\) allow us to derive the corresponding two-point modulus estimate on the ball. The proof reduces the two-point quantity along the chord joining \(x\) and \(y\) to a one-dimensional integral. The positivity of \(H_B\) and \(K_B\) shows that the directional derivative is minimized when the chord passes through the origin, while the strict convexity of \(Q_B\) then implies that, among intervals of fixed length, the minimum occurs when the interval is centered at the origin.

\begin{proposition}\label{prop:intrinsic}
For distinct \(x,y\in B_R\), we have
\begin{equation}\label{eq:F-intrinsic}
\left\langle
X_B(y)-X_B(x),
\frac{y-x}{|y-x|}
\right\rangle
\le
-2Q_B\!\left(\frac{|y-x|}{2}\right).
\end{equation}
Equality in \eqref{eq:F-intrinsic} holds if and only if \(y=-x\).
\end{proposition}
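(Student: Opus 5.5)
Set \(e:=(y-x)/|y-x|\) and \(L:=|y-x|/2\). Write the chord as \(\gamma(t)=c+te\), \(t\in[-L,L]\), where \(c:=(x+y)/2\). Decompose \(c=he'+ \sigma e\) with \(e'\perp e\), \(h\ge0\), so that \(\gamma(t)=he'+(\sigma+t)e\). By the fundamental theorem of calculus along the chord,
\[
\left\langle X_B(y)-X_B(x),e\right\rangle=\int_{-L}^{L}\left\langle DX_B(\gamma(t))e,e\right\rangle dt .
\]
The chord may pass through the origin, where \(X_B\) is only Lipschitz; this is harmless since \(Q_B(r)\sim ar\) by Lemma~\ref{lem:Q-ode}. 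Since \(X_B=-(Q_B(r)/r)\,z\) with \(r=|z|\), a direct computation gives
\[
\left\langle DX_B(z)e,e\right\rangle=-\frac{Q_B(r)}{r}-\Big(Q_B'(r)-\frac{Q_B(r)}{r}\Big)\frac{s^2}{r^2}=-Q_B'(r)+H_B(r)\frac{h^2}{r^2},
\]
where \(s=\langle z,e\rangle\) and \(r^2=h^2+s^2\). So the integrand is \(-g_h(s)\), with \(g_h(s):=\frac{Q_B(r)}{r}+H_B(r)\frac{s^2}{r^2}\) and \(r=\sqrt{h^2+s^2}\).

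\textbf{Step 1 (reduction to chords through the origin).} I would show that for fixed \(s\), the map \(h\mapsto g_h(s)\) is nondecreasing, strictly so when \(h>0\) and \(s\ne0\), and at least that \(g_h(s)\ge g_0(s)=Q_B'(|s|)\). Parametrize by \(r\) with \(s\) fixed, so \(h^2=r^2-s^2\), and write \(g=\frac{Q_B}{r}+H_B\frac{s^2}{r^2}\). Then
\[
\frac{d}{dr}g=\frac{H_B}{r}+H_B'\frac{s^2}{r^2}-2H_B\frac{s^2}{r^3}
=\frac{H_B}{r}\Big(1-\frac{s^2}{r^2}\Big)+K_B\frac{s^2}{r^2},
\]
using \(K_B=H_B'-H_B/r\). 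This is strictly positive by Lemma~\ref{lem:Q-convexity} (\(H_B>0\), \(K_B>0\)) unless \(s=0=h\). Hence
\[
\int_{-L}^{L} g_h(\sigma+t)\,dt\ \ge\ \int_{-L}^{L}Q_B'(|\sigma+t|)\,dt,
\]
with strict inequality if \(h>0\).

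\textbf{Step 2 (centering).} The function \(\tau\mapsto Q_B'(|\tau|)\) is even and strictly increasing in \(|\tau|\), because \(Q_B''>0\). So exactly as in the proof of Theorem~\ref{thm:one-dimensional-two-point-estimate}, the integral \(\int_{\sigma-L}^{\sigma+L}Q_B'(|\tau|)d\tau\) is uniquely minimized at \(\sigma=0\), where it equals \(2Q_B(L)\). Combining the two steps yields \eqref{eq:F-intrinsic}. Equality forces \(h=0\) and \(\sigma=0\), i.e.\ \(c=0\), i.e.\ \(y=-x\). Conversely, when \(y=-x\), \(X_B\) is odd and \(\langle X_B(y),e\rangle=-Q_B(L)\), so equality holds.

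\textbf{Main obstacle.} The delicate point is Step 1: computing \(\partial_r g\) at fixed \(s\) correctly and recognizing the combination \(K_B\). One must also check that all points of the chord stay in \(B_R\), which follows from convexity of \(B_R\). Finally, the computation at \(r=0\) must be handled either through the Lipschitz bound or by noting that the chord meets the origin only when \(h=0\), a measure-zero set of \(t\).
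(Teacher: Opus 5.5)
Your proposal is correct and follows essentially the same route as the paper's proof. The steps match: you write the directional flux difference as the chord integral of $Q_B(r)/r+H_B(r)s^2/r^2$, use $H_B,K_B>0$ to get monotonicity in $r$ at fixed $s$ and hence the lower bound $Q_B'(|s|)$, then center the interval using $Q_B''>0$, and equality forces the chord through the origin with midpoint there; the only differences are cosmetic (you parametrize the chord from its midpoint rather than from the foot of the perpendicular from the origin, and you check explicitly that the fundamental theorem of calculus applies at the origin).
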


\begin{proof}
Set \(d:=|x-y|\) and \(e:=(y-x)/d\). Write \(x=z+ae\) and \(y=z+be\), where \(z\perp e\) and \(b-a=d\). Let \(\rho:=|z|\) and $r(t):=(\rho^2+t^2)^{1/2}.$
Since
\[
-X_B(z+te)
=
Q_B(r(t))\frac{z+te}{r(t)},
\]
we have
\[
\Psi(t):=
\langle -X_B(z+te),e\rangle
=
Q_B(r(t))\frac{t}{r(t)}.
\]
A direct computation gives
\[
\Psi'(t)
=
\frac{Q_B(r)}{r}
+
H_B(r)\frac{t^2}{r^2}
=:G(r,t).
\]
For fixed \(t\ne0\), Lemma~\ref{lem:Q-convexity} yields
\[
\partial_rG(r,t)
=
\frac{H_B(r)}{r}
\left(1-\frac{t^2}{r^2}\right)
+
K_B(r)\frac{t^2}{r^2}
>0
\qquad\text{when }r>|t|.
\]
Since \(r(t)\ge |t|\), it follows that $\Psi'(t)\ge G(|t|,t)=Q_B'(|t|),$
with equality for all \(t\ne0\) if \(\rho=0\), whereas the inequality is strict for every \(t\ne0\) if \(\rho>0\). Hence
\[
-\left\langle X_B(y)-X_B(x),e\right\rangle
=
\int_a^b\Psi'(t)\,dt
\ge
\int_a^bQ_B'(|t|)\,dt,
\]
and equality here holds if and only if \(\rho=0\). It remains to minimize the last integral among intervals of length \(d\). Write $a=c-\frac d2,\,b=c+\frac d2,$
and define
\[
F_d(c)
:=
\int_{c-d/2}^{c+d/2}Q_B'(|t|)\,dt.
\]
Since \(Q_B''>0\), the function \(t\mapsto Q_B'(|t|)\) is even and strictly increasing in \(|t|\). Therefore
\[
F_d'(c)
=
Q_B'\!\left(\left|c+\frac d2\right|\right)
-
Q_B'\!\left(\left|c-\frac d2\right|\right),
\]
so \(F_d'(c)<0\) for \(c<0\) and \(F_d'(c)>0\) for \(c>0\). Thus \(F_d\) has its unique minimum at \(c=0\), and consequently
\[
\int_a^bQ_B'(|t|)\,dt
\ge
\int_{-d/2}^{d/2}Q_B'(|t|)\,dt
=
2\int_0^{d/2}Q_B'(t)\,dt
=
2Q_B(d/2),
\]
with equality if and only if \(a=-d/2\) and \(b=d/2\). Combining the two inequalities,
\[
-\left\langle X_B(y)-X_B(x),e\right\rangle
\ge
2Q_B(d/2),
\]
which is equivalent to \eqref{eq:F-intrinsic}. Equality holds if and only if \(\rho=0\) and \(a=-b\). Since then \(z=0\), we have \(y=-x\). Conversely, if \(y=-x\), then \(\rho=0\) and \(a=-b\), so equality holds.
\end{proof}

To compare the ball model with its one-dimensional counterpart, we recall the explicit first Dirichlet \(p\)-eigenfunction on \(I=(-R,R)\). Define
\[
\arcsin_p(s):=\int_0^s\frac{d\tau}{(1-\tau^p)^{1/p}},
\qquad
\pi_p:=2\arcsin_p(1),
\]
and let \(\sin_p\) be the inverse of \(\arcsin_p\) on \([0,\pi_p/2]\), extended in the standard way; see \cite{Lindqvist1995,LangEdmunds2011,EdmundsGurkaLang2012}. With the notation of Notation~\ref{not:one-dimensional-model}, $\lambda_{1,p}(I)
=
(p-1)\left(\frac{\pi_p}{2R}\right)^p$ and 
\[
U_I(r)
=
\sin_p\!\left(\frac{\pi_p}{2R}(R-r)\right),
\qquad
0\le r\le R,
\]
Moreover, \(Q_I\) satisfies
\[
Q_I'>0,
\qquad
Q_I''=p\,q_IQ_I'>0,
\qquad
\left(\frac{Q_I}{r}\right)'>0
\quad\text{on }(0,R),
\]
where the last inequality follows from the strict convexity of \(Q_I\) and \(Q_I(0)=0\).

\smallskip

We compare the radial logarithmic \(p\)-flux on the ball with the one-dimensional model through the normalized quantities \(Q_B/r\) and \(Q_I/r\). The argument combines their asymptotic behavior near the origin and the boundary with a contact-point analysis for the corresponding first-order equations. This first yields the eigenvalue comparison \(\lambda_{1,p}(B_R)>N\lambda_{1,p}(I)\), and then rules out any interior contact to obtain \(Q_B>Q_I\) on \((0,R)\).

\begin{proof}[\textbf{Proof of Theorem~\ref{thm:intro-comparison}.}]
Set
\[
w_B(r):=\frac{Q_B(r)}{r},
\qquad
w_I(r):=\frac{Q_I(r)}{r},
\qquad
\delta(r):=w_B(r)-w_I(r).
\]
We first compare the two functions near the endpoints.

\smallskip

Near \(R\), write \(A:=-U_B'(R)>0\). Since \(-U_B'\) stays positive in a
boundary neighborhood, the radial equation is nondegenerate there and may be
written as
\[
-(p-1)(-U_B')^{p-2}U_B''
+\frac{N-1}{r}(-U_B')^{p-1}
=
\lambda_{1,p}(B_R) U_B^{p-1}.
\]
Evaluating at \(r=R\) gives $U_B''(R)=\frac{N-1}{(p-1)R}A.$
Hence, as \(t\downarrow0\),
\[
U_B(R-t)
=
At+\frac{N-1}{2(p-1)R}At^2+o(t^2),
\qquad
-U_B'(R-t)
=
A+\frac{N-1}{(p-1)R}At+o(t),
\]
and therefore
\[
q_B(R-t)
=
\frac1t+\frac{N-1}{2(p-1)R}+o(1).
\]
For the interval case the curvature term is absent; hence
\(U_I''(R)=0\) and $q_I(R-t)=\frac1t+o(1).$
Since \(N\ge2\) and \(q\mapsto q^{p-1}\) is strictly increasing on
\((0,\infty)\), it follows that
\begin{equation}\label{eq:boundary-sign}
\delta(r)>0
\qquad\text{for }r\text{ sufficiently close to }R.
\end{equation}

We next prove $\lambda_{1,p}(B_R)>N\lambda_{1,p}(I).$
Indeed, suppose instead that
\(\lambda_{1,p}(B_R)\le N\lambda_{1,p}(I)\). By
Lemma~\ref{lem:Q-ode},
\[
w_B(r)
=
\frac{\lambda_{1,p}(B_R)}{N}
+
\frac{p-1}{N+p'}
\left(\frac{\lambda_{1,p}(B_R)}{N}\right)^{p'}r^{p'}
+o(r^{p'}),
\]
whereas the one-dimensional equation \eqref{eq:QI-ode-main} gives
\[
w_I(r)
=
\lambda_{1,p}(I)
+
\frac{p-1}{1+p'}\lambda_{1,p}(I)^{p'}r^{p'}
+o(r^{p'}).
\]
If $\lambda_{1,p}(B_R)<N\lambda_{1,p}(I),$
then \(\delta(r)<0\) near \(0\). If $\lambda_{1,p}(B_R)=N\lambda_{1,p}(I),$
then
\[
\delta(r)
=
(p-1)\lambda_{1,p}(I)^{p'}
\left(
\frac1{N+p'}-\frac1{1+p'}
\right)r^{p'}
+o(r^{p'})<0
\]
near \(0\). Thus in either case \(\delta<0\) near \(0\), while
\eqref{eq:boundary-sign} gives \(\delta>0\) near \(R\). Let \((0,b)\) be the
connected component of \(\{\delta<0\}\) containing a neighborhood of \(0\).
Then \(b\in(0,R)\), \(\delta(b)=0\), and \(\delta'(b)\ge0\). From
\eqref{eq:Q-ode} and \eqref{eq:QI-ode-main},
\begin{align}
rw_B'
&=
\lambda_{1,p}(B_R)-Nw_B+(p-1)r^{p'}w_B^{p'},
\label{eq:wB-main}\\
rw_I'
&=
\lambda_{1,p}(I)-w_I+(p-1)r^{p'}w_I^{p'}.
\label{eq:wI-main}
\end{align}
Hence, at every contact point \(r_0\) with
\(w_B(r_0)=w_I(r_0)\),
\begin{equation}\label{eq:contact-main}
r_0\delta'(r_0)
=
\lambda_{1,p}(B_R)-\lambda_{1,p}(I)-(N-1)w_I(r_0).
\end{equation}
Since \(w_I\) is strictly increasing and $\lim_{r\downarrow0}w_I(r)=\lambda_{1,p}(I),$
we have $w_I(b)>\lambda_{1,p}(I).$
Thus, under
\(\lambda_{1,p}(B_R)\le N\lambda_{1,p}(I)\),
\[
b\delta'(b)
\le
(N-1)\lambda_{1,p}(I)-(N-1)w_I(b)<0,
\]
contradicting \(\delta'(b)\ge0\). This proves $\lambda_{1,p}(B_R)>N\lambda_{1,p}(I),$
and consequently
\[
\lim_{r\downarrow0}w_B(r)
=
\frac{\lambda_{1,p}(B_R)}{N}
>
\lambda_{1,p}(I)
=
\lim_{r\downarrow0}w_I(r),
\]
so \(\delta>0\) near \(0\) as well.

It remains to exclude an interior negative region. Suppose that
\(\delta<0\) somewhere, and let \((a,b)\Subset(0,R)\) be a component of
\(\{\delta<0\}\). Then
\[
\delta(a)=\delta(b)=0,
\qquad
\delta'(a)\le0,
\qquad
\delta'(b)\ge0.
\]
Set $T:=
\frac{\lambda_{1,p}(B_R)-\lambda_{1,p}(I)}{N-1}.$
By \eqref{eq:contact-main}, $w_I(a)\ge T$ and $w_I(b)\le T,$
which contradicts the strict increase of \(w_I\) since \(a<b\). Therefore $\delta\ge0$ on $(0,R).$

\smallskip

Finally, suppose that \(\delta(r_0)=0\) at some \(r_0\in(0,R)\). Then \(r_0\) is
a local minimum, so $\delta'(r_0)=0$ and $\delta''(r_0)\ge0.$
Subtracting \eqref{eq:wI-main} from \eqref{eq:wB-main} gives
\[
r\delta'
=
\lambda_{1,p}(B_R)-\lambda_{1,p}(I)
-(N-1)w_I-N\delta
+(p-1)r^{p'}
\bigl((w_I+\delta)^{p'}-w_I^{p'}\bigr).
\]
Differentiating and evaluating at \(r_0\), where $\delta(r_0)=\delta'(r_0)=0,$
yields
\[
r_0\delta''(r_0)
=
-(N-1)w_I'(r_0)<0,
\]
again a contradiction. Hence $w_B(r)>w_I(r)$ for every $0<r<R,$
and Proposition~\ref{prop:intrinsic} yields
\[
\left\langle
X_B(y)-X_B(x),\frac{y-x}{|y-x|}
\right\rangle
\le
-2Q_B\!\left(\frac{|x-y|}{2}\right)
<
-2Q_I\!\left(\frac{|x-y|}{2}\right),
\]
which completes the proof.
\end{proof}

\section{Failure of the One-Dimensional Modulus in Higher Dimensions}

In this section we prove that the one-dimensional logarithmic \(p\)-flux
modulus fails in dimensions \(N\ge2\) for every \(p\neq2\). For
\(1<p<2\), we obtain fixed-scale degeneration of the two-point flux on
smooth uniformly convex domains. For \(p>2\), we use thin domains
\(D\times(-\varepsilon,\varepsilon)\), with \(D\subset\mathbb R^{N-1}\)
bounded and convex, together with the rescaled limit of the first
\(p\)-eigenfunction and the resulting tangential flux limit.

\subsection{The subquadratic case \texorpdfstring{$1<p<2$}{1<p<2}}

We show that, for every sufficiently small prescribed scale, the
two-point flux quantity can be made arbitrarily small on smooth uniformly
convex domains with fixed diameter and fixed point separation. The proof
starts from a convex domain with a flat boundary patch, where the
tangential logarithmic \(p\)-flux tends to zero near the boundary, and
then passes to smooth uniformly convex approximations while preserving
the diameter and the prescribed separation.
\begin{proof}[\textbf{Proof of Theorem~\ref{thm:failure-universal-two-point-subquadratic}.}]

\textbf{Step 1. Flat boundary degeneration.}

Let \(\Omega_0\subset\mathbb R^N\) be a bounded \(C^\infty\) convex domain
whose boundary contains a relatively open flat patch. After a dilation, we
may assume that $\operatorname{diam}(\Omega_0)=D.$
Let \(u_0>0\) be the first eigenfunction of \eqref{eq:mainequation},
normalized by $\|u_0\|_{L^p(\Omega_0)}=1.$
By choosing suitable Euclidean coordinates, there exist an open set
\(U\subset\mathbb R^{N-1}\) and \(\varepsilon_0>0\) such that
\[
\{(s,0):s\in U\}\subset\partial\Omega_0,
\qquad
U\times(0,\varepsilon_0)\subset\Omega_0.
\]
Fix \(U_0\Subset U\). By the boundary \(C^{1,\alpha}\)-regularity and the
Hopf boundary lemma
\cite[Theorem~1]{Lieberman1988}\cite[Theorem~5]{Vazquez1984},
\[
a(s):=\partial_tu_0(s,0)>0,
\qquad
s\in U_0.
\]
After shrinking the neighborhood, \(|\nabla u_0|\ge c_0>0\); hence the
\(p\)-Laplace equation is uniformly elliptic there. Indeed, wherever
\(\nabla u_0\neq0\),
\[
-\left(
\delta_{ij}
+(p-2)\frac{(u_0)_i(u_0)_j}{|\nabla u_0|^2}
\right)(u_0)_{ij}
=
\lambda_{1,p}(\Omega_0)u_0^{p-1}|\nabla u_0|^{2-p},
\]
and the coefficient matrix has eigenvalues \(1\) and \(p-1\). Local
boundary Schauder estimates therefore yield \(u_0\in C^{2,\beta}\) for
some \(\beta\in(0,1)\); see \cite{Lieberman1988} and
\cite[Corollary~6.7 and Theorem~6.19]{GilbargTrudinger}. Thus, for every
tangential unit vector \(e\),
\[
u_0(s,t)=a(s)t+O(t^2),
\qquad
\partial_eu_0(s,t)
=
t\,\partial_ea(s)+O(t^{1+\beta}).
\]
Writing \(v_0:=\log u_0\), we obtain
\[
\partial_tv_0=\frac1t+O(1),
\qquad
\partial_ev_0
=
\partial_e\log a(s)+O(t^\beta)
=
O(1).
\]
Hence $|\nabla v_0|^{p-2}
=
t^{2-p}(1+O(t)),$
and therefore, uniformly for \(s\in U_0\),
\begin{equation}\label{eq:flat-tangential-vanishing}
\left\langle X_{\Omega_0}(s,t),e\right\rangle
=
O(t^{2-p})
\longrightarrow0
\quad\text{as }t\downarrow0,\quad X_{\Omega_0}
:=
|\nabla\log u_0|^{p-2}\nabla\log u_0.
\end{equation}

Choose \(s_*\in U_0\), a tangential unit vector \(e_1\in\mathbb R^{N-1}\),
and \(r_*>0\) such that $s_*+\rho e_1\in U_0$ for every $|\rho|\le r_*.$
Fix any \(r\in(0,r_*)\), and set $e:=(e_1,0)\in\mathbb R^N.$
For \(t\in(0,\varepsilon_0)\), define
\[
x_t:=(s_*-re_1,t),
\qquad
y_t:=(s_*+re_1,t).
\]
Then by \eqref{eq:flat-tangential-vanishing},
\begin{equation}\label{eq:flat-fixed-scale-vanishing}
\left\langle
X_{\Omega_0}(y_t)-X_{\Omega_0}(x_t),e
\right\rangle
\longrightarrow0
\qquad\text{as }t\downarrow0.
\end{equation}

\smallskip

\textbf{Step 2. Uniformly convex approximation.}

After a translation, assume \(0\in\operatorname{int}\Omega_0\), and let
\[
\mu(x):=\inf\{\rho>0:x\in \rho\Omega_0\}
\]
be the Minkowski functional of \(\Omega_0\). Then \(\mu\) is convex and
positively homogeneous of degree one,
\[
\Omega_0=\{\mu<1\},
\qquad
\partial\Omega_0=\{\mu=1\},
\]
and \(\mu\) is \(C^\infty\) in a neighborhood of \(\partial\Omega_0\);
see \cite[Chapters~1--2]{Schneider2014}. For \(\varepsilon>0\), set
\[
F_\varepsilon(x):=\mu(x)+\varepsilon|x|^2,
\qquad
\widetilde\Omega_\varepsilon:=\{F_\varepsilon<1\}.
\]
For all sufficiently small \(\varepsilon\), \(\partial\widetilde\Omega_\varepsilon\)
lies in the neighborhood where \(\mu\) is smooth. Since \(\mu\) is convex,
\[
D^2F_\varepsilon
=
D^2\mu+2\varepsilon I
\ge2\varepsilon I.
\]
Moreover, on \(\partial\widetilde\Omega_\varepsilon\), Euler's identity
\(x\cdot\nabla\mu=\mu\) gives
\[
x\cdot\nabla F_\varepsilon
=
\mu(x)+2\varepsilon|x|^2
=
1+\varepsilon|x|^2>0,
\]
so \(\nabla F_\varepsilon\neq0\) and
\(\partial\widetilde\Omega_\varepsilon\) is \(C^\infty\). If
\(\tau\in T_x\partial\widetilde\Omega_\varepsilon\), then
\[
\mathrm{II}_{\partial\widetilde\Omega_\varepsilon}(\tau,\tau)
=
\frac{D^2F_\varepsilon(x)[\tau,\tau]}
{|\nabla F_\varepsilon(x)|}
\ge
\frac{2\varepsilon}
{\max_{\partial\widetilde\Omega_\varepsilon}|\nabla F_\varepsilon|}
|\tau|^2,
\]
hence \(\widetilde\Omega_\varepsilon\) is uniformly convex. Furthermore,
\[
\widetilde\Omega_\varepsilon\subset\Omega_0,
\qquad
\widetilde\Omega_\varepsilon\uparrow\Omega_0
\quad\text{as }\varepsilon\downarrow0.
\]
Writing $D_\varepsilon:=\operatorname{diam}(\widetilde\Omega_\varepsilon),\,
a_\varepsilon:=\frac{D}{D_\varepsilon},\,
\Omega_\varepsilon:=a_\varepsilon\widetilde\Omega_\varepsilon,$
we have
\[
D_\varepsilon\longrightarrow D,
\qquad
a_\varepsilon\longrightarrow1,
\qquad
\operatorname{diam}(\Omega_\varepsilon)=D.
\]

Let \(\widetilde u_\varepsilon>0\) be the first \(p\)-eigenfunction on
\(\widetilde\Omega_\varepsilon\), normalized by $\|\widetilde u_\varepsilon\|_{L^p(\widetilde\Omega_\varepsilon)}=1.$
Since \(\widetilde\Omega_\varepsilon\subset\Omega_0\), $\lambda_{1,p}(\widetilde\Omega_\varepsilon)
\ge
\lambda_{1,p}(\Omega_0).$
Conversely, for every \(\varphi\in C_c^\infty(\Omega_0)\), one has
\(\operatorname{supp}\varphi\subset\widetilde\Omega_\varepsilon\) for all
sufficiently small \(\varepsilon\), and therefore
\[
\limsup_{\varepsilon\downarrow0}
\lambda_{1,p}(\widetilde\Omega_\varepsilon)
\le
\frac{\int_{\Omega_0}|\nabla\varphi|^p\,dx}
{\int_{\Omega_0}|\varphi|^p\,dx}.
\]
Taking the infimum over
\(\varphi\in C_c^\infty(\Omega_0)\setminus\{0\}\) yields $\lambda_{1,p}(\widetilde\Omega_\varepsilon)
\rightarrow
\lambda_{1,p}(\Omega_0).$
Extend \(\widetilde u_\varepsilon\) by zero to \(\Omega_0\). Then
\(\{\widetilde u_\varepsilon\}\) is bounded in
\(W_0^{1,p}(\Omega_0)\), and, after passing to a subsequence,
\[
\widetilde u_\varepsilon\rightharpoonup u
\quad\text{in }W_0^{1,p}(\Omega_0),
\qquad
\widetilde u_\varepsilon\to u
\quad\text{in }L^p(\Omega_0).
\]
Thus \(\|u\|_{L^p(\Omega_0)}=1\), and weak lower semicontinuity gives
\[
\int_{\Omega_0}|\nabla u|^p\,dx
\le
\liminf_{\varepsilon\downarrow0}
\lambda_{1,p}(\widetilde\Omega_\varepsilon)
=
\lambda_{1,p}(\Omega_0).
\]
On the other hand, the variational characterization gives $\lambda_{1,p}(\Omega_0)
\le
\int_{\Omega_0}|\nabla u|^p\,dx.$
Hence equality holds, so \(u\) is a normalized first eigenfunction of
\(\Omega_0\). By simplicity, $u=u_0.$

Let \(K\Subset\Omega_0\) be compact. For all sufficiently small
\(\varepsilon\), \(K\Subset\widetilde\Omega_\varepsilon\). Since
\(\lambda_{1,p}(\widetilde\Omega_\varepsilon)\) is bounded, the standard
local boundedness estimates and the interior \(C^{1,\gamma}\)-regularity
for \(p\)-Laplace equations \cite{Tolksdorf1984} yield, for some
\(\gamma\in(0,1)\), $\sup_{\varepsilon\ll1}
\|\widetilde u_\varepsilon\|_{C^{1,\gamma}(K)}
<\infty.$
By compactness and uniqueness of the limit, $\widetilde u_\varepsilon\rightarrow u_0$ in $C^1(K).$ Since \(u_0>0\) on \(K\),
\[
\nabla\log\widetilde u_\varepsilon
\longrightarrow
\nabla\log u_0
\qquad\text{uniformly on }K.
\]
As \(z\mapsto |z|^{p-2}z\) is continuous on \(\mathbb R^N\) for every
\(p>1\),
\begin{equation}\label{eq:flux-domain-convergence}
X_{\widetilde\Omega_\varepsilon}
\longrightarrow
X_{\Omega_0}
\quad\text{uniformly on }K,\quad X_{\widetilde\Omega_\varepsilon}
:=
|\nabla\log\widetilde u_\varepsilon|^{p-2}
\nabla\log\widetilde u_\varepsilon.
\end{equation}

Choose any sequence \(t_k\downarrow0\). Since \(a_\varepsilon\to1\), for
each fixed \(k\) and all sufficiently small \(\varepsilon\), the points
\[
\widetilde x_{\varepsilon,k}
:=
\left(s_*-\frac{r}{a_\varepsilon}e_1,t_k\right),
\qquad
\widetilde y_{\varepsilon,k}
:=
\left(s_*+\frac{r}{a_\varepsilon}e_1,t_k\right)
\]
belong to \(\widetilde\Omega_\varepsilon\).
For each \(k\), choose \(\varepsilon_k>0\) sufficiently small that
\[
\left|
\left\langle
X_{\widetilde\Omega_{\varepsilon_k}}
(\widetilde y_{\varepsilon_k,k})
-
X_{\widetilde\Omega_{\varepsilon_k}}
(\widetilde x_{\varepsilon_k,k}),
e
\right\rangle
-
\left\langle
X_{\Omega_0}(s_*+re_1,t_k)
-
X_{\Omega_0}(s_*-re_1,t_k),
e
\right\rangle
\right|
\le\frac1k.
\]
Here we have used \eqref{eq:flux-domain-convergence}, together with
\(a_{\varepsilon_k}\to1\) and the continuity of \(X_{\Omega_0}\) at the
fixed height \(t_k>0\). By \eqref{eq:flat-fixed-scale-vanishing},
\[
\left\langle
X_{\widetilde\Omega_{\varepsilon_k}}
(\widetilde y_{\varepsilon_k,k})
-
X_{\widetilde\Omega_{\varepsilon_k}}
(\widetilde x_{\varepsilon_k,k}),
e
\right\rangle
\longrightarrow0.
\]

Set $a_k:=a_{\varepsilon_k},\,
\Omega_k:=a_k\widetilde\Omega_{\varepsilon_k},\,
x_k:=a_k\widetilde x_{\varepsilon_k,k},\,
y_k:=a_k\widetilde y_{\varepsilon_k,k}.$
Then
\[
\operatorname{diam}(\Omega_k)=D,
\qquad
|x_k-y_k|=2r,
\qquad
\frac{y_k-x_k}{|y_k-x_k|}=e.
\]
If \(u_k\) denotes the first eigenfunction on \(\Omega_k\), then, up to
multiplication by a positive normalization constant, $u_k(x)
=
\widetilde u_{\varepsilon_k}(x/a_k).$
Consequently,
\[
\begin{aligned}
\left\langle
X_{\Omega_k}(y_k)-X_{\Omega_k}(x_k),e
\right\rangle
&=
a_k^{1-p}
\left\langle
X_{\widetilde\Omega_{\varepsilon_k}}
(\widetilde y_{\varepsilon_k,k})
-
X_{\widetilde\Omega_{\varepsilon_k}}
(\widetilde x_{\varepsilon_k,k}),
e
\right\rangle\longrightarrow0,
\end{aligned}
\]
which completes the proof.
\end{proof}

\subsection{The superquadratic case and thin-domain asymptotics}

We begin with the thin-domain asymptotics for the first eigenfunction.
After rescaling the transverse variable, the second-order term in the
\(p\)-energy reduces, after a suitable change of variables, to the
Dirichlet energy on an arbitrary bounded convex domain
\(D\subset\mathbb R^{N-1}\). The rescaled eigenfunctions converge to the
product of the first one-dimensional \(p\)-eigenfunction in the
transverse variable and a power of the first Dirichlet eigenfunction of
the Laplacian on \(D\). Near a maximum point of the latter, the limiting
tangential logarithmic \(p\)-flux is of order \(t^{p-1}\), whereas the
one-dimensional comparison term is of order \(t\). Since \(p>2\), this
gives the violation in
Theorem~\ref{thm:general-convex-flux-failure-p-superquadratic}.

We first establish several auxiliary lemmas needed for the thin-domain
analysis.

\begin{lemma}
\label{lem:thin-algebraic-coercivity}
Let \(p>2\). Then there exists \(c_p>0\) such that, for all
\(a,c\in\mathbb R\) and \(b\in\mathbb R^{N-1}\),
\begin{align}
\bigl(|b|^2+(a+c)^2\bigr)^{p/2}
-|a|^p-p|a|^{p-2}ac
&\geq
\frac p2|a|^{p-2}|b|^2,
\label{eq:algebra-longitudinal}\\
\bigl(|b|^2+(a+c)^2\bigr)^{p/2}
-|a|^p-p|a|^{p-2}ac
&\geq
c_p|a|^{p-2}c^2.
\label{eq:algebra-transverse}
\end{align}
\end{lemma}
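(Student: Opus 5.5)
The plan is to reduce both inequalities, by homogeneity and symmetry, to the single case \(a=1\), and there use only the convexity of \(s\mapsto s^{p/2}\) on \([0,\infty)\), which holds because \(p>2\). The expected outcome is that \eqref{eq:algebra-transverse} holds with the explicit constant \(c_p=p/2\).

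\textbf{Trivial case and normalization.} If \(a=0\), both right-hand sides vanish. The left-hand side equals \((|b|^2+c^2)^{p/2}\ge0\), so both inequalities hold. Now let \(a\neq0\) and denote the left-hand side by \(L(a,b,c)\).
\begin{itemize}[label=--]
\item Replacing \((a,b,c)\) by \((-a,-b,-c)\) leaves \(L\) and both right-hand sides unchanged. Hence we may assume \(a>0\).
\item Both sides are positively homogeneous of degree \(p\) under \((a,b,c)\mapsto(\mu a,\mu b,\mu c)\), \(\mu>0\). Scaling by \(\mu=1/a\), we may therefore assume \(a=1\).
\end{itemize}
It then remains to show, with \(B:=|b|^2\ge0\),
\[
h(B,c):=\bigl(B+(1+c)^2\bigr)^{p/2}-1-pc\;\ge\;\frac p2 B
\qquad\text{and}\qquad
h(B,c)\;\ge\;c_p c^2 .
\]

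\textbf{Key step: a tangent-line bound at \(s=1\).} Since \(p/2\ge1\), the function \(g(s)=s^{p/2}\) is convex on \([0,\infty)\). It therefore lies above its tangent line at \(s=1\):
\[
g(s)\ge 1+\tfrac p2(s-1)\qquad (s\ge0).
\]
Apply this with \(s=B+(1+c)^2\). This gives
\[
h(B,c)\ \ge\ \frac p2\bigl(B+(1+c)^2-1\bigr)-pc
\;=\;\frac p2 B+\frac p2\bigl(1+2c+c^2-1-2c\bigr)
\;=\;\frac p2 B+\frac p2 c^2 .
\]
Dropping the \(c^2\) term yields \eqref{eq:algebra-longitudinal}, with the sharp constant \(p/2\). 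Dropping the \(B\) term yields \eqref{eq:algebra-transverse} with \(c_p=p/2\). Undoing the normalization multiplies each right-hand side by \(|a|^{p-2}\), as required.

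\textbf{Expected obstacle.} The only delicate point is choosing where to linearize. The natural first attempt is the Bregman-divergence viewpoint for \(\xi\mapsto|\xi|^p\) on \(\mathbb R^N\), expanding around \(\eta=(0,a)\) with \(\xi=(b,a+c)\). The obvious tangent expansion in the \(b\)-direction at \(\tau^2=(a+c)^2\) produces the weight \(|a+c|^{p-2}\) rather than \(|a|^{p-2}\), and this degenerates when \(a+c\) is small. Linearizing \(s^{p/2}\) at \(s=a^2\) instead, which is \(s=1\) after normalization, avoids this degeneration and makes the cross terms \(pc\) cancel exactly.

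If one did not want the explicit constant in \eqref{eq:algebra-transverse}, a compactness fallback is available. Set \(b=0\), since \(h\) is increasing in \(B\). Then bound \(\bigl(|1+c|^p-1-pc\bigr)/c^2\) from below. This ratio is positive for \(c\neq0\) by strict convexity, tends to \(p(p-1)/2\) as \(c\to0\), and tends to \(+\infty\) as \(|c|\to\infty\) because \(p>2\).
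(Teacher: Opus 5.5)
Your proof is correct, and it takes a different and shorter route than the paper.

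\textbf{The paper's argument.} The paper treats the two inequalities separately.
For \eqref{eq:algebra-longitudinal} it sets \(s=|b|/|a|\) and \(y=(a+c)/a\). It then shows that
\[
F(s,y)=(s^2+y^2)^{p/2}-\tfrac p2 s^2-py+p-1
\]
is coercive and has the unique critical point \((0,1)\), where \(F=0\).
For \eqref{eq:algebra-transverse} it discards \(|b|^2\). It then bounds \(\bigl(|1+r|^p-1-pr\bigr)/r^2\) from below using strict convexity of \(t\mapsto|t|^p\) and the limits at \(r\to0\) and \(|r|\to\infty\). This is exactly your compactness fallback, so the paper's \(c_p\) is left implicit.

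\textbf{What your argument gives.} Your single tangent-line estimate for \(s\mapsto s^{p/2}\) at \(s=a^2\) replaces both arguments. It actually yields the combined bound
\[
\bigl(|b|^2+(a+c)^2\bigr)^{p/2}-|a|^p-p|a|^{p-2}ac\;\ge\;\frac p2\,|a|^{p-2}\bigl(|b|^2+c^2\bigr),
\]
and it needs only \(p\ge2\).

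You can also drop the normalization. Linearizing \(s^{p/2}\) directly at \(s_0=a^2\) gives the same bound for every \(a\), including \(a=0\), where the tangent slope vanishes.

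\textbf{Sharpness.} You noted that \(p/2\) is sharp in \eqref{eq:algebra-longitudinal}. Your constant \(c_p=p/2\) is also optimal in \eqref{eq:algebra-transverse}. Take \(b=0\) and \(c=-2a\): then \((a+c)^2=a^2\), the tangent-line step is an equality, and both sides equal \(2p|a|^p\). So the infimum that the paper's compactness argument only shows to be positive is exactly \(p/2\).

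\textbf{Payoff downstream.} In Lemma~\ref{lem:thin-picone-coercivity}, the combined bound delivers \eqref{eq:picone-longitudinal-bound} and \eqref{eq:picone-transverse-bound} simultaneously. The transverse constant becomes explicitly \(2/p\), with no ``after decreasing \(c_p\) if necessary.''

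\textbf{What each route buys.} The paper's coercivity and compactness approach is more generic: it works without spotting the right linearization point. Your choice of linearization point is what gives explicit, sharp constants with almost no computation.
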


\begin{proof}
For \(a\neq0\), after setting
\[
s:=\frac{|b|}{|a|},
\qquad
y:=\frac{a+c}{a},
\]
\eqref{eq:algebra-longitudinal} reduces to
\[
F(s,y)
:=
(s^2+y^2)^{p/2}
-\frac p2s^2-py+p-1
\geq0.
\]
Since \(p>2\), \(F\) is coercive, and its critical point equations are
\[
s\bigl((s^2+y^2)^{(p-2)/2}-1\bigr)=0,
\qquad
y(s^2+y^2)^{(p-2)/2}=1.
\]
Their unique solution is \((s,y)=(0,1)\), where \(F(0,1)=0\), proving
\eqref{eq:algebra-longitudinal}. Moreover, strict convexity of
\(t\mapsto|t|^p\), together with
\[
\lim_{r\to0}
\frac{|1+r|^p-1-pr}{r^2}
=
\frac{p(p-1)}2,
\qquad
\frac{|1+r|^p-1-pr}{r^2}
\longrightarrow+\infty
\quad\text{as }|r|\to\infty,
\]
gives $\inf_{r\in\mathbb R}
\frac{|1+r|^p-1-pr}{r^2}>0,$
where the quotient at \(r=0\) is defined by continuity. Since
\[
\bigl(|b|^2+(a+c)^2\bigr)^{p/2}
\geq |a+c|^p,
\]
this proves \eqref{eq:algebra-transverse}. The case \(a=0\) follows
directly.
\end{proof}

The next lemma combines the Picone decomposition with the preceding
algebraic estimates to obtain coercive control of both the longitudinal
and transverse derivatives of \(H\).

\begin{lemma}
\label{lem:thin-picone-coercivity}
Let \(p>2\) and \(D\subset\mathbb R^{N-1}\) be a bounded domain,
\(J:=(-1,1)\), \(Q:=D\times J\), and let \(\phi>0\) be the first
Dirichlet \(p\)-eigenfunction on \(J\), normalized by
\(\int_J\phi^p\,dz=1\). Set $\rho(z):=|\phi'(z)|^{p-2}\phi(z)^2.$
For \(\varepsilon>0\), define
\[
E_\varepsilon(v)
:=
\int_Q
\bigl(|v_z|^2+\varepsilon^2|\nabla_xv|^2\bigr)^{p/2}
\,dx\,dz.
\]
If \(v\in W_0^{1,p}(Q)\) is nonnegative and $H:=\left(\frac v\phi\right)^{p/2},$
then
\begin{align}
E_\varepsilon(v)-\lambda_{1,p}(J)\int_Qv^p
&\geq
\frac2p\,\varepsilon^2
\int_Q\rho|\nabla_xH|^2,
\label{eq:picone-longitudinal-bound}\\
E_\varepsilon(v)-\lambda_{1,p}(J)\int_Qv^p
&\geq
c_p
\int_Q\rho|H_z|^2,
\label{eq:picone-transverse-bound}
\end{align}
where \(c_p>0\) depends only on \(p\).
\end{lemma}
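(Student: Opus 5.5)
Set \(w:=\log\phi\), so that \(\phi'=\phi w_z\). The plan is to write \(v=\phi H^{2/p}\), apply the pointwise inequalities of Lemma~\ref{lem:thin-algebraic-coercivity} with
\[
a:=\phi',\qquad
c:=\frac{2}{p}\,\phi\,\frac{H_z}{H},\qquad
b:=\varepsilon\,\frac{2}{p}\,\phi\,\frac{\nabla_xH}{H},
\]
and then remove the linear term \(p|a|^{p-2}ac\) using the one-dimensional equation for \(\phi\).

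\textbf{Gradient decomposition.} On \(\{H>0\}\) we have \(\nabla v=H^{2/p}\bigl(\nabla\phi+\tfrac2p\phi\nabla H/H\bigr)\), so \(v_z=H^{2/p}(a+c)\) and \(\varepsilon\nabla_xv=H^{2/p}b\). Hence
\[
\bigl(|v_z|^2+\varepsilon^2|\nabla_xv|^2\bigr)^{p/2}=H^2\bigl(|b|^2+(a+c)^2\bigr)^{p/2}.
\]
The inequalities of Lemma~\ref{lem:thin-algebraic-coercivity} then give, pointwise,
\[
H^2\bigl(|b|^2+(a+c)^2\bigr)^{p/2}
\ge
H^2|\phi'|^p+pH^2|\phi'|^{p-2}\phi'\,c
+\max\Bigl\{\tfrac p2|a|^{p-2}H^2|b|^2,\;c_p|a|^{p-2}H^2c^2\Bigr\}.
\]
Since \(H^2|a|^{p-2}|b|^2=\tfrac4{p^2}\varepsilon^2\rho|\nabla_xH|^2\), the first remainder equals \(\tfrac2p\varepsilon^2\rho|\nabla_xH|^2\). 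Likewise \(H^2|a|^{p-2}c^2=\tfrac4{p^2}\rho H_z^2\), so the second remainder is \(c_p'\rho H_z^2\) with a new constant \(c_p'\) that still depends only on \(p\).

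\textbf{Cancelling the linear term.} We have \(H^2|\phi'|^p+pH^2|\phi'|^{p-2}\phi'c=H^2|\phi'|^p+2|\phi'|^{p-2}\phi'\phi HH_z\). Integrate in \(z\) and integrate the second term by parts, writing \(2HH_z=(H^2)_z\):
\[
\int_J|\phi'|^{p-2}\phi'\phi\,(H^2)_z\,dz
=-\int_J\bigl(|\phi'|^{p-2}\phi'\bigr)'\phi H^2\,dz-\int_J|\phi'|^pH^2\,dz .
\]
Substituting \(-(|\phi'|^{p-2}\phi')'=\lambda_{1,p}(J)\phi^{p-1}\) turns the sum into \(\lambda_{1,p}(J)\int_J\phi^pH^2=\lambda_{1,p}(J)\int_Jv^p\). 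Integrating over \(D\) gives both \eqref{eq:picone-longitudinal-bound} and \eqref{eq:picone-transverse-bound}.

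\textbf{Main obstacle: justification.} The formal steps need three points to be made rigorous.
\begin{enumerate}[label=\textnormal{(\roman*)}]
\item \(H\) need not be smooth, and \(\phi\) vanishes at \(z=\pm1\), so \(v/\phi\) can be singular at the boundary.
\item The set \(\{H=0\}\), where the substitution \(v=\phi H^{2/p}\) degenerates.
\item Boundary terms at \(z=\pm1\) in the integration by parts.
\end{enumerate}
I would first prove the inequalities for \(v\in C_c^\infty(Q)\), \(v\ge0\), replaced by \(v+\delta\phi\) with \(\delta>0\). Then \(H_\delta=(v/\phi+\delta)^{p/2}\) is positive and, since \(\phi\in C^1\) with \(\phi\) comparable to the distance to \(\{z=\pm1\}\) by Hopf, it is Lipschitz away from the lateral boundary. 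The boundary terms \(|\phi'|^{p-2}\phi'\phi H^2\) at \(z=\pm1\) then vanish because \(\phi\to0\) and \(H_\delta\) stays bounded. Next let \(\delta\to0\): the left side converges by continuity of \(E_\varepsilon\), and Fatou applied to the right side gives the bounds for \(v\); on \(\{v=0\}\) the integrand \(\rho|\nabla H|^2\) vanishes a.e. Finally extend to \(v\in W_0^{1,p}(Q)\), \(v\ge0\), by density of nonnegative smooth compactly supported functions, again using Fatou on the right-hand side.
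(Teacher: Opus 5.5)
Your proposal is correct and is essentially the paper's proof. Your triple $(a,b,c)=(\phi',\frac2p\varepsilon\phi\nabla_xH/H,\frac2p\phi H_z/H)$ is the paper's $(h\phi',\varepsilon\phi\nabla_xh,\phi h_z)$ divided by $h=H^{2/p}$, which is harmless because Lemma~\ref{lem:thin-algebraic-coercivity} is $p$-homogeneous, and your integration by parts against $-(|\phi'|^{p-2}\phi')'=\lambda_{1,p}(J)\phi^{p-1}$ is exactly the paper's testing with $v^p/\phi^{p-1}$. The only difference is the regularization: the paper keeps $v$ and replaces $\phi$ by $\phi+\delta$, so that $v/(\phi+\delta)$ is directly a $W_0^{1,p}$ test function and $E_\varepsilon(v)$ appears exactly, whereas you shift $v$ to $v+\delta\phi$ for smooth compactly supported $v$ and finish by density and Fatou. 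This works equally well once the a.e.\ limit of $\nabla_xH_n$ is identified with $\nabla_xH$ via the chain rule $\nabla_xH=\frac p2h^{(p-2)/2}\nabla_xh$ (with $h=v/\phi$) on sets where $\phi$ is bounded below.
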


\begin{proof}
Set \(D_\varepsilon:=(\varepsilon\nabla_x,\partial_z)\). For
\(\delta>0\), define
\[
\phi_\delta:=\phi+\delta,
\qquad
h_\delta:=\frac{v}{\phi_\delta},
\qquad
H_\delta:=h_\delta^{p/2},
\]
and $\xi_\delta:=(0,h_\delta\phi'),\,
\eta_\delta
:=
(\varepsilon\phi_\delta\nabla_xh_\delta,
\phi_\delta h_{\delta,z}).$
Since \(v=h_\delta\phi_\delta\) and \(\phi_\delta'=\phi'\),
\[
D_\varepsilon v
=
\xi_\delta+\eta_\delta.
\]
The following computation may first be carried out for bounded
\(v\in W_0^{1,p}(Q)\); the general case follows by truncation. Testing
\[
-\bigl(|\phi'|^{p-2}\phi'\bigr)'
=
\lambda_{1,p}(J)\phi^{p-1}
\]
with $\frac{v^p}{\phi_\delta^{p-1}}
=
h_\delta^p\phi_\delta$
and integrating also in \(x\), we obtain
\[
\lambda_{1,p}(J)
\int_Q
\phi^{p-1}\frac{v^p}{\phi_\delta^{p-1}}
=
\int_Q
\Bigl(
|\xi_\delta|^p
+
p|\xi_\delta|^{p-2}\xi_\delta\cdot\eta_\delta
\Bigr).
\]
Hence
\begin{equation}\label{eq:picone-regularized}
E_\varepsilon(v)
-
\lambda_{1,p}(J)
\int_Q
\phi^{p-1}\frac{v^p}{\phi_\delta^{p-1}}
=
\int_Q
\Bigl(
|\xi_\delta+\eta_\delta|^p
-
|\xi_\delta|^p
-
p|\xi_\delta|^{p-2}\xi_\delta\cdot\eta_\delta
\Bigr).
\end{equation}
Compare with the \(p\)-Picone identity of
\cite[Theorem~1.1]{AllegrettoHuang1998}.

Applying Lemma~\ref{lem:thin-algebraic-coercivity} to
\eqref{eq:picone-regularized}, with
\[
a=h_\delta\phi',
\qquad
b=\varepsilon\phi_\delta\nabla_xh_\delta,
\qquad
c=\phi_\delta h_{\delta,z},
\]
and using
\[
h_\delta^{p-2}|\nabla_xh_\delta|^2
=
\frac4{p^2}|\nabla_xH_\delta|^2,
\qquad
h_\delta^{p-2}h_{\delta,z}^2
=
\frac4{p^2}|H_{\delta,z}|^2,
\]
we obtain, after decreasing \(c_p\) if necessary,
\begin{align*}
E_\varepsilon(v)
-
\lambda_{1,p}(J)
\int_Q
\phi^{p-1}\frac{v^p}{\phi_\delta^{p-1}}
&\geq
\frac2p\,\varepsilon^2
\int_Q
|\phi'|^{p-2}\phi_\delta^2
|\nabla_xH_\delta|^2,\\
E_\varepsilon(v)
-
\lambda_{1,p}(J)
\int_Q
\phi^{p-1}\frac{v^p}{\phi_\delta^{p-1}}
&\geq
c_p
\int_Q
|\phi'|^{p-2}\phi_\delta^2
|H_{\delta,z}|^2.
\end{align*}
For every \(0<\eta<1\), \(\phi\) is bounded away from zero on
\(J_\eta:=(-1+\eta,1-\eta)\); hence, with $h:=\frac v\phi$ and $H:=h^{p/2},$
one has $h\in W^{1,p}(D\times J_\eta),\,
H\in W^{1,2}(D\times J_\eta),$
and
\[
\nabla_xH
=
\frac p2h^{(p-2)/2}\nabla_xh,
\qquad
H_z
=
\frac p2h^{(p-2)/2}h_z
\]
a.e. on \(D\times J_\eta\). Since $0\leq\phi^{p-1}\phi_\delta^{1-p}\leq1,$
dominated convergence on the left, Fatou's lemma on
\(D\times J_\eta\), and then \(\eta\downarrow0\) yield
\eqref{eq:picone-longitudinal-bound} and
\eqref{eq:picone-transverse-bound}.
\end{proof}

The following weighted Poincar\'e estimate controls the transverse
oscillation relative to the weight \(\rho\).

\begin{lemma}
\label{lem:weighted-transverse-poincare}
Let \(p>2\), \(J:=(-1,1)\), and let \(\phi>0\) be the first Dirichlet
eigenfunction corresponding to \(\lambda_{1,p}(J)\), normalized by $\int_J\phi^p\,dz=1.$
Set
\[
\rho(z):=|\phi'(z)|^{p-2}\phi(z)^2,
\qquad
A_p:=\int_J\rho(z)\,dz.
\]
For every locally absolutely continuous \(\psi\) satisfying $\int_J\rho|\psi'|^2\,dz<\infty,$
set $\overline\psi_\rho
:=
\frac1{A_p}\int_J\rho\psi\,dz.$
Then
\begin{equation}\label{eq:weighted-poincare-rho}
\int_J\phi^p|\psi-\overline\psi_\rho|^2\,dz
+
|\psi(0)-\overline\psi_\rho|^2
\leq
C_p\int_J\rho|\psi'|^2\,dz.
\end{equation}
\end{lemma}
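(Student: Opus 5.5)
We will prove \eqref{eq:weighted-poincare-rho} by first understanding the behaviour of the weight \(\rho\) near \(\pm1\), then proving a weighted Hardy-type bound on \(J\), and finally treating the point value \(\psi(0)\) separately.

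\textbf{The weight near the endpoints.} The first eigenfunction \(\phi\) is even, positive on \(J\), and \(\phi(\pm1)=0\). By the Hopf lemma in one dimension (or by the explicit \(\sin_p\) representation recalled in Section~2), \(\phi'(\pm1)\neq0\). Hence, near the endpoints, with \(d(z):=1-|z|\),
\[
\phi(z)\simeq d(z),\qquad \rho(z)\simeq d(z)^2,\qquad \phi(z)^p\simeq d(z)^p .
\]
At \(z=0\) we have \(\phi'(0)=0\), so \(\rho\) vanishes there too. Near \(0\), however, \(\phi\) is bounded below and \(|\phi'(z)|\simeq |z|^{1/(p-1)}\), since \(|\phi'|^{p-2}\phi'\) is \(C^1\) with nonzero derivative \(-\lambda_{1,p}(J)\phi(0)^{p-1}\) at \(0\). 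Thus
\[
\rho(z)\simeq |z|^{(p-2)/(p-1)}\quad\text{near }0,
\]
and \(\rho\) is bounded above and below by positive constants on compact subsets of \(J\setminus\{0\}\).

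\textbf{Key inequality.} We plan to show that, with \(\psi_0:=\psi(1/2)\),
\[
\int_J\phi^p|\psi-\psi_0|^2\,dz+|\psi(0)-\psi_0|^2\le C\int_J\rho|\psi'|^2\,dz .
\]
Write \(\psi(z)-\psi_0=\int_{1/2}^z\psi'\,ds\).

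\emph{The point value.} By Cauchy--Schwarz,
\[
|\psi(0)-\psi_0|^2\le\Bigl(\int_0^{1/2}\rho^{-1}\Bigr)\int_0^{1/2}\rho|\psi'|^2 ,
\]
and \(\rho^{-1}\sim |s|^{-(p-2)/(p-1)}\) is integrable near \(0\) because \((p-2)/(p-1)<1\). The same argument controls \(\sup_{|z|\le1/2}|\psi-\psi_0|\) through the value at \(-1/2\). Hence the interior part of \(\int\phi^p|\psi-\psi_0|^2\) is bounded.

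\emph{Near the endpoints.} For \(z\) close to \(1\), Cauchy--Schwarz gives
\[
|\psi(z)-\psi_0|^2\le\Bigl(\int_{1/2}^z d^{-2}\Bigr)\int\rho|\psi'|^2\lesssim d(z)^{-1}\int\rho|\psi'|^2 .
\]
Integrating against \(\phi^p\simeq d^p\), which absorbs the factor \(d^{-1}\) since \(p-1>-1\), gives the bound. The endpoint \(-1\) is handled symmetrically, connecting through the value at \(-1/2\).

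\textbf{Passing to the weighted mean.} To replace \(\psi_0\) by \(\overline\psi_\rho\), note that
\[
|\overline\psi_\rho-\psi_0|\le A_p^{-1}\int_J\rho|\psi-\psi_0|.
\]
Since \(\rho\lesssim d^2\lesssim\phi^p\) near the endpoints and is bounded elsewhere, where we have sup control, this is bounded by \(C\bigl(\int\rho|\psi'|^2\bigr)^{1/2}\). Then \((a+b)^2\le2a^2+2b^2\) and \(\int_J\phi^p=1\) finish the proof.

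\textbf{Main obstacle.} The main subtlety is the degeneracy of \(\rho\) at the interior point \(z=0\). The argument is rescued precisely by the integrability of \(\rho^{-1}\) there, which uses \(p>2\) only in the mild form that the exponent \((p-2)/(p-1)\) is less than \(1\). We must also verify that \(\phi'(\pm1)\neq0\), which guarantees \(\rho\simeq d^2\).
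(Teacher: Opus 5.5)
Your proposal is correct and follows essentially the same proof as the paper. Both use Cauchy--Schwarz against $\int\rho^{-1}$, the asymptotics $\rho\simeq|z|^{(p-2)/(p-1)}$ at $0$ and $\rho\simeq(1-|z|)^2$ at $\pm1$, and integrability of the resulting kernel against $\phi^p$ and $\rho$. The only difference is the anchor point. The paper anchors at $\psi(0)$ through $K(z)=\bigl|\int_0^z\rho(s)^{-1}\,ds\bigr|$, so the point-value term comes for free. It then controls the mean by Jensen, $|\overline\psi_\rho-\psi(0)|^2\le A_p^{-1}\int_J\rho|\psi-\psi(0)|^2$, using $\rho K\in L^1(J)$.

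One step needs fixing. In the last step you write $\rho\lesssim d^2\lesssim\phi^p$ near the endpoints. For $p>2$ the second comparison goes the wrong way, since $\phi^p\simeq d^p\ll d^2$. Instead, use your pointwise bound $|\psi-\psi_0|^2\lesssim d^{-1}\int_J\rho|\psi'|^2$ directly. Near the endpoints it gives $\int\rho|\psi-\psi_0|\lesssim\bigl(\int_{1/2<|z|<1}d^{3/2}\,dz\bigr)\bigl(\int_J\rho|\psi'|^2\bigr)^{1/2}$, which is what you need.
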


\begin{proof}
Since \(\phi\) is even and strictly decreasing on \((0,1)\),
integration of its eigenvalue equation gives
\[
(-\phi')^{p-1}(z)
=
\lambda_{1,p}(J)
\int_0^z\phi(s)^{p-1}\,ds,
\qquad
0<z<1.
\]
Hence
\[
\rho(z)\asymp |z|^{(p-2)/(p-1)}
\quad\text{as }z\to0,
\qquad
\rho(z)\asymp(1-|z|)^2,
\quad
\phi(z)^p\asymp(1-|z|)^p
\quad\text{as }|z|\to1.
\]
Define $K(z)
:=
\left|
\int_0^z\frac{ds}{\rho(s)}
\right|.$
Then
\[
K(z)\asymp |z|^{1/(p-1)}
\quad\text{as }z\to0,
\qquad
K(z)\asymp(1-|z|)^{-1}
\quad\text{as }|z|\to1,
\]
so that \(\rho K,\phi^pK\in L^1(J)\). By Cauchy--Schwarz,
\[
|\psi(z)-\psi(0)|^2
\leq
K(z)\int_J\rho|\psi'|^2.
\]
Therefore
\[
|\overline\psi_\rho-\psi(0)|^2
\leq
\frac1{A_p}
\int_J\rho|\psi-\psi(0)|^2\,dz
\leq
C_p\int_J\rho|\psi'|^2\,dz,
\]
and consequently
\[
\begin{aligned}
\int_J\phi^p|\psi-\overline\psi_\rho|^2\,dz
&\leq
2\int_J\phi^p|\psi-\psi(0)|^2\,dz
+
2|\psi(0)-\overline\psi_\rho|^2\leq
C_p\int_J\rho|\psi'|^2\,dz.
\end{aligned}
\]
This proves \eqref{eq:weighted-poincare-rho}.
\end{proof}

To pass from the \(L^2\)-limit on the central section to local uniform
convergence, we use the following compactness property of log-concave
functions.

\begin{lemma}
\label{lem:logconcave-L2-uniform}
Let \(U\subset\mathbb R^m\) be an open convex set and let
\(f_j:U\to(0,\infty)\) be log-concave. If $f_j\rightarrow f$ in $L^2_{\mathrm{loc}}(U),$
where \(f>0\) is continuous, then
\[
f_j\longrightarrow f
\qquad\text{locally uniformly in }U.
\]
\end{lemma}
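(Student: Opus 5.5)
The plan is to first obtain uniform local bounds on the \(f_j\) from the \(L^2_{\mathrm{loc}}\) convergence and log-concavity, then deduce local Lipschitz bounds on \(\log f_j\) from concavity, and finally conclude by Arzel\`a--Ascoli together with uniqueness of the limit. Write \(g_j:=\log f_j\), which is concave on \(U\). Fix a compact \(K\subset U\) and choose a compact convex \(K'\) with \(K\subset\operatorname{int}K'\), \(K'\subset U\), and \(\operatorname{dist}(K,\partial K')=:\delta>0\).

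\emph{Step 1: a uniform upper bound.} Suppose \(x\in K'\) and \(f_j(x)=M\). By concavity of \(g_j\), on the homothetic set \(x+\tfrac12(B-x)\), for a fixed ball \(B\subset U\) containing \(K'\) in a slightly larger compact \(K''\), we have \(g_j\ge \tfrac12 g_j(x)+\tfrac12\inf_B g_j\). This needs a lower bound, so I would rather argue directly. Concavity gives, for every \(y\) in a ball \(B_\delta(x)\),
\[
g_j\bigl(\tfrac{x+y}{2}\bigr)\ge\tfrac12 g_j(x)+\tfrac12 g_j(y).
\]
Squaring and integrating \(f_j^2\) over \(B_{\delta/2}(x)\) controls \(\int f_j(x)f_j(y)\,dy\) over \(B_\delta(x)\). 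Since \(\int_{B_\delta(x)}f_j\) is bounded below uniformly (as \(f_j\to f>0\) in \(L^1_{\mathrm{loc}}\) and \(f\) is bounded below on compacts), this yields \(M\le C\) uniformly in \(j\) and \(x\in K'\). More precisely, I would use
\[
M\int_{B_\delta(x)}f_j\,dy\le\int_{B_\delta(x)}f_j\bigl(\tfrac{x+y}{2}\bigr)^2dy=2^m\int_{B_{\delta/2}(x)}f_j^2,
\]
which is bounded by the \(L^2_{\mathrm{loc}}\) convergence.

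\emph{Step 2: a uniform lower bound.} Fix \(x\in K'\). Since \(f_j\to f\) in measure on \(B_\delta(x)\) and \(f\ge c>0\) there, the set \(\{f_j\ge c/2\}\cap B_{\delta}(x)\) has measure at least half of \(|B_\delta|\) for large \(j\), uniformly in \(x\) by compactness. This set cannot be contained in a half-space through \(x\) intersected with the ball, except for a small-measure part; more usefully, it contains points \(y\) and the reflected direction. The cleanest route is to pick \(y\) in this set with \(2x-y\) also in \(U\), using \(|\{f_j\ge c/2\}\cap(2x-\{f_j\ge c/2\})\cap B_\delta(x)|>0\), which holds once each set has measure exceeding half. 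Then, with \(z:=2x-y\), concavity gives
\[
g_j(x)\ge\tfrac12\bigl(g_j(y)+g_j(z)\bigr)\ge\log(c/2).
\]
To make the measure exceed half, I would use a smaller ball on which the convergence in measure is good enough. Thus \(c/2\le f_j\le C\) on \(K'\), and \(|g_j|\le C'\) on \(K'\).

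\emph{Step 3: equi-Lipschitz bounds and the limit.} A concave function bounded by \(C'\) on \(K'\) is Lipschitz on \(K\) with constant \(2C'/\delta\), by the standard slope comparison along lines. Hence the \(f_j\) are uniformly bounded and equi-Lipschitz on \(K\). By Arzel\`a--Ascoli, every subsequence has a further subsequence converging uniformly on \(K\). The limit must equal \(f\) a.e. because of the \(L^2\) convergence, and hence everywhere, since both are continuous. Therefore the whole sequence converges uniformly on \(K\).

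I expect the main obstacle to be the lower bound in Step 2: arranging, uniformly in \(x\), the symmetric pair of points where \(f_j\) is large requires the measure-convergence estimate to be uniform over \(x\in K'\). I would handle this by covering \(K'\) with finitely many small balls.
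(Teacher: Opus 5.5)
Your proof is correct, but it reaches the two-sided bounds on \(\log f_j\) by a different, integral route from the paper's. The paper first extracts an a.e.\ convergent subsequence and then argues pointwise with the convex functions \(-\log f_j\):
\begin{itemize}
\item a polytope \(P\) whose vertices lie in the convergence set gives an upper bound for \(-\log f_j\) on \(P\), namely the maximum over the vertices;
\item a single interior convergence point \(x_*\), written as \(x_*=\theta x+(1-\theta)y\), then gives a lower bound on \(K\);
\item the slope estimate, Arzel\`a--Ascoli, identification of the limit on a dense set, and the subsequence principle finish the argument.
\end{itemize}
You never use pointwise convergence to get the bounds. Squaring midpoint concavity and integrating gives the explicit estimate \(f_j(x)\int_{B_\delta(x)}f_j\le 2^m\int_{B_{\delta/2}(x)}f_j^2\), which is where the \(L^2\) hypothesis enters. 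Convergence in measure together with the point reflection \(y\mapsto 2x-y\) then gives \(f_j(x)\ge c/2\). Your route yields quantitative bounds, valid for all large \(j\) and uniform in \(x\), without extracting subsequences. The paper's route only uses subsequential a.e.\ convergence, so it applies verbatim under \(L^1_{\mathrm{loc}}\) convergence or local convergence in measure.

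A few points need cleaning up.
\begin{itemize}
\item Choose \(\delta\) also smaller than \(\operatorname{dist}(K',\partial U)\). The choice \(\delta=\operatorname{dist}(K,\partial K')\) alone only guarantees \(B_\delta(x)\subset U\) for \(x\in K\), while Steps 1 and 2 use \(x\in K'\).
\item The uniformity you flag as the main obstacle is automatic. Set \(K'_\delta:=\{y:\operatorname{dist}(y,K')\le\delta\}\). Then the measure of \(B_\delta(x)\setminus\{f_j\ge c/2\}\) is at most \(|\{|f_j-f|\ge c/2\}\cap K'_\delta|\), which tends to \(0\) independently of \(x\). So neither a smaller ball nor a finite covering is needed.
\item Delete the abandoned first attempt at the start of Step 1.
\item The step ``equal a.e., hence everywhere'' needs \(K\) to be the closure of its interior, for example a closed ball, which you may assume without loss of generality. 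The paper's ``dense set \(E\cap K\)'' carries the same implicit caveat.
\end{itemize}
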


\begin{proof}
It suffices to show that every subsequence admits a further subsequence
converging locally uniformly to \(f\). Starting from an arbitrary
subsequence, the \(L^2_{\mathrm{loc}}(U)\)-convergence and a diagonal
argument yield a further subsequence, still denoted by \(f_j\), such that
\[
f_j(x)\longrightarrow f(x)
\qquad\text{for a.e. }x\in U.
\]
Let \(E\subset U\) denote the full-measure set of convergence; in
particular, \(E\) is dense in \(U\). Set
\[
g_j:=-\log f_j,
\qquad
g:=-\log f.
\]
Then \(g_j\) is finite and convex on \(U\), \(g\) is continuous, and $g_j(x)\rightarrow g(x)$ for every $x\in E.$

Fix \(K\Subset U\). Choose a compact convex polytope \(P\) such that $K\Subset\operatorname{int}P\Subset U$
and whose vertices belong to \(E\). Since \(g_j\) converges at the
vertices of \(P\), convexity gives a uniform upper bound for \(g_j\) on
\(P\). Choose \(x_*\in E\cap\operatorname{int}P\). Since
\(K\Subset\operatorname{int}P\), there exists \(\theta\in(0,1)\) such
that, for every \(x\in K\), one can find \(y\in P\) satisfying $x_*=\theta x+(1-\theta)y.$
Hence
\[
g_j(x_*)
\leq
\theta g_j(x)+(1-\theta)g_j(y),
\]
which, together with the uniform upper bound on \(P\) and the boundedness
of \(g_j(x_*)\), gives a uniform lower bound for \(g_j\) on \(K\).
Applying the same argument on a compact set $K\Subset\operatorname{int}K'\Subset\operatorname{int}P$
yields a uniform bound for \(|g_j|\) on \(K'\). The interior slope
estimate for convex functions then gives a uniform Lipschitz bound for
\(g_j\) on \(K\).

By Arzel\`a--Ascoli, the present subsequence admits a further subsequence
converging uniformly on \(K\). Its limit agrees with \(g\) on the dense
set \(E\cap K\), and hence on all of \(K\). Since the original
subsequence was arbitrary, $g_j\rightarrow g$ uniformly on $K.$
As \(K\Subset U\) is arbitrary, \(g_j\to g\) locally uniformly in \(U\).
Therefore
\[
f_j=e^{-g_j}\longrightarrow e^{-g}=f
\]
locally uniformly in \(U\).
\end{proof}

We now prove Theorem~\ref{profileprop:thin-domain} in three steps.

\begin{proof}[\textbf{Proof of Theorem~\ref{profileprop:thin-domain}.}]
\medskip
\noindent\textbf{Step 1. Rescaling and compactness.}
Set
\[
Q:=D\times J,
\qquad
w_\varepsilon(x,z):=u_\varepsilon(x,\varepsilon z),
\qquad
v_\varepsilon
:=
\frac{w_\varepsilon}{\|w_\varepsilon\|_{L^p(Q)}}.
\]
Then \(\|v_\varepsilon\|_{L^p(Q)}=1\), and, writing $\mu_\varepsilon
:=
\varepsilon^p\lambda_{1,p}(\Omega_\varepsilon),$
the change of variables \(y=\varepsilon z\) gives
\begin{equation}\label{eq:thin-rayleigh}
\mu_\varepsilon
=
\min_{\substack{v\in W_0^{1,p}(Q)\\ \|v\|_{L^p(Q)}=1}}
E_\varepsilon(v),
\qquad
E_\varepsilon(v)
:=
\int_Q
\bigl(|v_z|^2+\varepsilon^2|\nabla_xv|^2\bigr)^{p/2}
\,dx\,dz.
\end{equation}

Taking \(f(x)\phi(z)\), where
\(f\in C_c^\infty(D)\), \(f\geq0\), and \(\int_Df^p=1\), as a test
function and using
\[
(a^2+t^2)^{p/2}
\leq
a^p+C_p\bigl(a^{p-2}t^2+|t|^p\bigr),
\qquad a,t\geq0,
\]
we obtain
\begin{equation}\label{eq:thin-energy-Oeps2}
0
\leq
\mu_\varepsilon-\lambda_{1,p}(J)
\leq
C\varepsilon^2.
\end{equation}
Set $H_\varepsilon
:=
\left(\frac{v_\varepsilon}{\phi}\right)^{p/2}.$
Lemma~\ref{lem:thin-picone-coercivity} and
\eqref{eq:thin-energy-Oeps2} imply
\begin{equation}\label{eq:H-bounds}
\int_Q\rho|\nabla_xH_\varepsilon|^2\leq C,
\qquad
\int_Q\rho|H_{\varepsilon,z}|^2\leq C\varepsilon^2.
\end{equation}

For a.e. \(z\in J\),
\(v_\varepsilon(\cdot,z)\in W_0^{1,p}(D)\). Since \(\phi(z)>0\), $h_\varepsilon(\cdot,z)
:=
\frac{v_\varepsilon(\cdot,z)}{\phi(z)}
\in W_0^{1,p}(D).$
The chain rule and H\"older's inequality give
\[
\begin{aligned}
\int_D|\nabla_xH_\varepsilon|^2\,dx
&=
\frac{p^2}{4}
\int_Dh_\varepsilon^{p-2}|\nabla_xh_\varepsilon|^2\,dx\\
&\leq
\frac{p^2}{4}
\left(\int_Dh_\varepsilon^p\,dx\right)^{(p-2)/p}
\left(\int_D|\nabla_xh_\varepsilon|^p\,dx\right)^{2/p},
\end{aligned}
\]
so \(H_\varepsilon(\cdot,z)\in H_0^1(D)\) for a.e. \(z\). The
Poincar\'e inequality on \(D\), followed by \eqref{eq:H-bounds}, yields
\[
\int_Q\rho H_\varepsilon^2
\leq
\frac1{\lambda_{1,2}(D)}
\int_Q\rho|\nabla_xH_\varepsilon|^2
\leq C.
\]
Define $G_\varepsilon(x)
:=
\frac1{A_p}
\int_J\rho(z)H_\varepsilon(x,z)\,dz.$
Then \(G_\varepsilon\in H_0^1(D)\), with
\[
\nabla G_\varepsilon
=
\frac1{A_p}\int_J\rho\nabla_xH_\varepsilon\,dz,
\qquad
A_p|\nabla G_\varepsilon(x)|^2
\leq
\int_J\rho|\nabla_xH_\varepsilon|^2\,dz.
\]
Applying Lemma~\ref{lem:weighted-transverse-poincare} to
\(H_\varepsilon(x,\cdot)\) for a.e. \(x\) and integrating in \(x\), we
obtain
\begin{equation}\label{eq:H-to-G}
\int_Q\phi^p|H_\varepsilon-G_\varepsilon|^2\,dx\,dz
\leq
C\varepsilon^2,
\qquad
\int_D|H_\varepsilon(x,0)-G_\varepsilon(x)|^2\,dx
\leq
C\varepsilon^2.
\end{equation}
Since $1
=
\int_Qv_\varepsilon^p
=
\int_Q\phi^pH_\varepsilon^2$
and \(\int_J\phi^p=1\), \eqref{eq:H-to-G} gives
\[
\left|
\|G_\varepsilon\|_{L^2(D)}-1
\right|
\leq
\|G_\varepsilon-H_\varepsilon\|_{L^2(Q,\phi^p)}
\longrightarrow0.
\]
Thus
\begin{equation}\label{eq:G-L2-normalization}
\int_DG_\varepsilon^2\,dx\longrightarrow1.
\end{equation}

\medskip
\noindent\textbf{Step 2. Second-order asymptotics.}
By Lemma~\ref{lem:thin-picone-coercivity} and the definition of
\(G_\varepsilon\),
\[
\frac{\mu_\varepsilon-\lambda_{1,p}(J)}{\varepsilon^2}
\geq
\frac2p
\int_Q\rho|\nabla_xH_\varepsilon|^2
\geq
\frac{2A_p}{p}
\int_D|\nabla G_\varepsilon|^2.
\]
Since \(G_\varepsilon\in H_0^1(D)\), the sharp Dirichlet Poincar\'e
inequality gives
\begin{equation}\label{eq:thin-second-order-liminf}
\liminf_{\varepsilon\downarrow0}
\frac{\mu_\varepsilon-\lambda_{1,p}(J)}{\varepsilon^2}
\geq
\frac{2A_p}{p}\lambda_{1,2}(D).
\end{equation}

For the matching upper bound, choose nonnegative
\(G_k\in C_c^\infty(D)\), vanishing to infinite order at the boundary of
their supports, such that
\[
\int_DG_k^2\,dx=1,
\qquad
G_k\longrightarrow G_D
\quad\text{in }H_0^1(D),
\]
and set $f_k:=G_k^{2/p},\,
v_k(x,z):=f_k(x)\phi(z).$
Then \(f_k\in W_0^{1,p}(D)\) and \(\|v_k\|_{L^p(Q)}=1\). For fixed
\(k\),
\[
\frac{
\bigl(
f_k^2|\phi'|^2
+
\varepsilon^2|\nabla f_k|^2\phi^2
\bigr)^{p/2}
-
f_k^p|\phi'|^p
}{\varepsilon^2}
\longrightarrow
\frac p2
f_k^{p-2}|\nabla f_k|^2
|\phi'|^{p-2}\phi^2
\]
a.e. in \(Q\). Moreover, for \(0<\varepsilon<1\),
\[
\begin{aligned}
0
&\leq
\frac{
\bigl(
f_k^2|\phi'|^2
+
\varepsilon^2|\nabla f_k|^2\phi^2
\bigr)^{p/2}
-
f_k^p|\phi'|^p
}{\varepsilon^2}\leq
C_p\left(
f_k^{p-2}|\nabla f_k|^2
|\phi'|^{p-2}\phi^2
+
|\nabla f_k|^p\phi^p
\right),
\end{aligned}
\]
and the right-hand side is integrable. Hence dominated convergence gives
\[
\begin{aligned}
\lim_{\varepsilon\downarrow0}
\frac{E_\varepsilon(v_k)-\lambda_{1,p}(J)}{\varepsilon^2}
&=
\frac p2
\left(
\int_J|\phi'|^{p-2}\phi^2\,dz
\right)
\int_Df_k^{p-2}|\nabla f_k|^2\,dx=
\frac{2A_p}{p}
\int_D|\nabla G_k|^2\,dx,
\end{aligned}
\]
where $f_k^{p-2}|\nabla f_k|^2
=
\frac4{p^2}|\nabla G_k|^2.$
By the minimality of \(v_\varepsilon\) and then \(k\to\infty\),
\begin{equation}\label{eq:thin-second-order-limsup}
\limsup_{\varepsilon\downarrow0}
\frac{\mu_\varepsilon-\lambda_{1,p}(J)}{\varepsilon^2}
\leq
\frac{2A_p}{p}\lambda_{1,2}(D).
\end{equation}
Combining \eqref{eq:thin-second-order-liminf} and
\eqref{eq:thin-second-order-limsup} proves
\eqref{eq:thin-eigenvalue-second-order}.

Moreover, the preceding lower bound and
\eqref{eq:G-L2-normalization} imply $\int_D|\nabla G_\varepsilon|^2\,dx
\rightarrow
\lambda_{1,2}(D).$
Since \(\{G_\varepsilon\}\) is bounded in \(H_0^1(D)\), every sequence
\(\varepsilon_j\downarrow0\) admits a subsequence such that
\[
G_{\varepsilon_j}\rightharpoonup G
\quad\text{in }H_0^1(D),
\qquad
G_{\varepsilon_j}\longrightarrow G
\quad\text{in }L^2(D).
\]
Then \(G\geq0\), \(\|G\|_{L^2(D)}=1\), and $\int_D|\nabla G|^2\,dx
\leq
\lambda_{1,2}(D).$
The sharp Poincar\'e inequality gives the reverse inequality. Hence
\(G=G_D\), and the convergence of the \(L^2\)-norms of the gradients
gives
\begin{equation}\label{eq:G-strong}
G_\varepsilon
\longrightarrow
G_D
\qquad\text{strongly in }H_0^1(D).
\end{equation}

\medskip
\noindent\textbf{Step 3. Identification of the rescaled limit.}
By \eqref{eq:H-to-G} and \eqref{eq:G-strong},
\[
\int_Q
\phi^p|H_\varepsilon-G_D|^2\,dx\,dz
\longrightarrow0.
\]
Since $v_\varepsilon
=
\phi H_\varepsilon^{2/p},$
and \(2/p\in(0,1)\), we have
\[
|a^{2/p}-b^{2/p}|
\leq
|a-b|^{2/p},
\qquad a,b\geq0.
\]
Therefore
\[
\begin{aligned}
\int_Q
\left|
v_\varepsilon-\phi G_D^{2/p}
\right|^p\,dx\,dz
&=
\int_Q
\phi^p
\left|
H_\varepsilon^{2/p}-G_D^{2/p}
\right|^p\,dx\,dz\\
&\leq
\int_Q
\phi^p|H_\varepsilon-G_D|^2\,dx\,dz
\longrightarrow0.
\end{aligned}
\]
Thus $v_\varepsilon
\rightarrow
\phi G_D^{2/p}$ in $L^p(Q).$
The positive first \(p\)-eigenfunction is log-concave on bounded convex
domains \cite[Theorem~1.4]{ChenHauer2026}. Since $v_\varepsilon(x,z)
=
\frac{u_\varepsilon(x,\varepsilon z)}
{\|w_\varepsilon\|_{L^p(Q)}},$
the function \(v_\varepsilon\) is positive and log-concave on \(Q\).
Since \(p>2\), $v_\varepsilon
\rightarrow
\phi G_D^{2/p}$ in $L^2_{\mathrm{loc}}(Q),$
Lemma~\ref{lem:logconcave-L2-uniform} therefore gives
\[
v_\varepsilon(x,z)
\longrightarrow
\phi(z)G_D(x)^{2/p}
\]
locally uniformly in \(D\times J\). Finally, $u_\varepsilon(x,\varepsilon z)
=
\frac{v_\varepsilon(x,z)}{v_\varepsilon(x_0,0)}.$
Hence
\[
u_\varepsilon(x,\varepsilon z)
\longrightarrow
\frac{\phi(z)}{\phi(0)}
\left(
\frac{G_D(x)}
{G_D(x_0)}
\right)^{2/p}
\]
locally uniformly in \(D\times J\). This completes the proof.
\end{proof}

We now turn to the proof of Theorem~\ref{thm:general-convex-flux-failure-p-superquadratic}.

\begin{proof}[\textbf{Proof of Theorem~\ref{thm:general-convex-flux-failure-p-superquadratic}.}]
For \(0<\varepsilon<1\), let
\[
\Omega_\varepsilon
:=
D\times(-\varepsilon,\varepsilon),
\qquad
2R_\varepsilon
:=
\operatorname{diam}(\Omega_\varepsilon)
=
\sqrt{\operatorname{diam}(D)^2+4\varepsilon^2}.
\]
Set $R_D:=\frac{\operatorname{diam}(D)}2,$
so that \(R_\varepsilon\to R_D\) as \(\varepsilon\downarrow0\).

Let \(G_D>0\) be the first Dirichlet eigenfunction of the Laplacian on
\(D\), and choose \(x_D\in D\) such that $G_D(x_D)=\max_D G_D.$
Let \(u_\varepsilon>0\) be the first eigenfunction of
\eqref{eq:mainequation} on \(\Omega_\varepsilon\), normalized by
\(u_\varepsilon(x_D,0)=1\). By
Theorem~\ref{profileprop:thin-domain},
\begin{equation}\label{eq:thin-profile-general-D}
u_\varepsilon(x,0)
\longrightarrow
F_D(x)
:=
\left(
\frac{G_D(x)}{G_D(x_D)}
\right)^{2/p}
\end{equation}
locally uniformly in \(D\).

Set
\[
h_\varepsilon(x):=\log u_\varepsilon(x,0),
\qquad
h(x):=\log F_D(x)
=
\frac2p
\log\frac{G_D(x)}{G_D(x_D)}.
\]
The positive first \(p\)-eigenfunction is log-concave on bounded convex
domains \cite[Theorem~1.4]{ChenHauer2026}; hence \(h_\varepsilon\) is
concave on \(D\). Since \(F_D>0\) in \(D\),
\eqref{eq:thin-profile-general-D} yields $h_\varepsilon\longrightarrow h$ locally uniformly in $D.$
It follows that
\begin{equation}\label{eq:gradient-convergence-central-section}
\nabla h_\varepsilon(x)
\longrightarrow
\nabla h(x)
\qquad\text{for every }x\in D.
\end{equation}
Indeed, if \(e\in\mathbb S^{N-2}\) and \(s>0\) is such that
\(x\pm se\in D\), concavity gives
\[
\frac{h_\varepsilon(x+se)-h_\varepsilon(x)}{s}
\leq
\partial_eh_\varepsilon(x)
\leq
\frac{h_\varepsilon(x)-h_\varepsilon(x-se)}{s}.
\]
Passing first to the limit \(\varepsilon\downarrow0\) and then
\(s\downarrow0\), and using \(h\in C^\infty(D)\), proves
\eqref{eq:gradient-convergence-central-section}.

Since \(\Omega_\varepsilon\) is invariant under \(z\mapsto-z\), simplicity
of the first eigenvalue implies that \(u_\varepsilon\) is even in \(z\).
Consequently, $\partial_z u_\varepsilon(x,0)=0.$
Thus, with
\[
Y_D(x)
:=
|\nabla h(x)|^{p-2}\nabla h(x),
\]
\eqref{eq:gradient-convergence-central-section} and the continuity of
\(\xi\mapsto|\xi|^{p-2}\xi\) give
\begin{equation}\label{eq:central-flux-limit-general-D}
X_{\Omega_\varepsilon}(x,0)
\longrightarrow
\bigl(Y_D(x),0\bigr)
\qquad\text{for every }x\in D.
\end{equation}

Fix the unit vector \(e\in\mathbb S^{N-2}\) in the statement. Since
\(x_D\in D\), there exists \(t_0>0\) such that
$x_D+te\in D$ for \(0<t<t_0\).
Since \(x_D\) is a maximum point of \(G_D\), $\nabla h(x_D)=0,\,
Y_D(x_D)=0.$
Moreover, \(h\in C^\infty(D)\), and hence
$\nabla h(x_D+te)=O(t)$ as $t\downarrow0.$
Therefore
\begin{equation}\label{eq:general-D-flux-small-t}
\left|
\left\langle
Y_D(x_D+te),e
\right\rangle
\right|
\leq
C t^{p-1}
\qquad\text{for }0<t<t_0.
\end{equation}

On the other hand, writing \(p':=p/(p-1)\), the one-dimensional model
satisfies
\[
Q_{I_{R_D}}'(s)
=
\lambda_{1,p}(I_{R_D})
+
(p-1)Q_{I_{R_D}}(s)^{p'},
\qquad
Q_{I_{R_D}}(0)=0.
\]
Thus
\begin{equation}\label{eq:one-dimensional-Q-small-t}
2Q_{I_{R_D}}\!\left(\frac t2\right)
=
\lambda_{1,p}(I_{R_D})t+o(t).
\end{equation}
Since \(p>2\), \eqref{eq:general-D-flux-small-t} and
\eqref{eq:one-dimensional-Q-small-t} imply that there exists
\(t_{p,D}\in(0,t_0)\) such that, for every \(t\in(0,t_{p,D})\),
\begin{equation}\label{eq:strict-limit-flux-violation-general-D}
\left\langle
Y_D(x_D+te)-Y_D(x_D),e
\right\rangle
>
-2Q_{I_{R_D}}\!\left(\frac t2\right).
\end{equation}

Fix \(t\in(0,t_{p,D})\). By
\eqref{eq:central-flux-limit-general-D},
\[
\left\langle
X_{\Omega_\varepsilon}(x_D+te,0)
-
X_{\Omega_\varepsilon}(x_D,0),
e
\right\rangle
\longrightarrow
\left\langle
Y_D(x_D+te)-Y_D(x_D),e
\right\rangle.
\]
Moreover, the scaling $Q_{I_R}(s)
=
R^{1-p}Q_{I_1}(s/R)$
and \(R_\varepsilon\to R_D\) give
\[
Q_{I_\varepsilon}\!\left(\frac t2\right)
\longrightarrow
Q_{I_{R_D}}\!\left(\frac t2\right).
\]
Hence, by the strict inequality
\eqref{eq:strict-limit-flux-violation-general-D}, there exists
\(\varepsilon_{p,D,t}>0\) such that, for every
\(0<\varepsilon<\varepsilon_{p,D,t}\),
\[
\left\langle
X_{\Omega_\varepsilon}(x_D+te,0)
-
X_{\Omega_\varepsilon}(x_D,0),
e
\right\rangle
>
-2Q_{I_\varepsilon}\!\left(\frac t2\right).
\]
Taking $x:=(x_D,0),\,
y:=(x_D+te,0),$
we have \(|x-y|=t\) and $\frac{y-x}{|y-x|}=(e,0).$
Therefore
\[
\left\langle
X_{\Omega_\varepsilon}(y)-X_{\Omega_\varepsilon}(x),
\frac{y-x}{|y-x|}
\right\rangle
>
-2Q_{I_\varepsilon}\!\left(\frac{|x-y|}{2}\right),
\]
which proves the theorem.
\end{proof}

\section{The Logarithmic \texorpdfstring{$p$}{p}-Flux and \texorpdfstring{$p=2$}{p=2} Rigidity}
\label{sec:geometric-structure-logarithmic-flux}

This section compares the Hessian of an arbitrary \(C^2\) function \(w\)
with the symmetric differential of its \(p\)-gradient. We show that
positive semidefiniteness of the latter implies convexity of \(w\), while
the converse holds for all \(w\) if and only if \(p=2\). The result is
then applied to \(w=-\log u\).

\smallskip

We first prove the level-set decomposition stated in
Proposition~\ref{prop:level-set-flux-decomposition}.

\begin{proof}[\textbf{Proof of Proposition~\ref{prop:level-set-flux-decomposition}.}]
Since \(\nabla w=s\nu\), for
\(\tau,\eta\in T_x\Sigma_x\),
\[
D^2w(\tau,\eta)
=
\langle D_\tau(s\nu),\eta\rangle
=
s\,\langle D_\tau\nu,\eta\rangle
=
s\,\mathrm{II}(\tau,\eta).
\]
Moreover, $D^2w(\tau,\nu)
=
\langle D_\tau(s\nu),\nu\rangle
=
\partial_\tau s,$
because \(\langle D_\tau\nu,\nu\rangle=0\), while
\[
D^2w(\nu,\nu)
=
\langle D_\nu\nabla w,\nu\rangle
=
\partial_\nu s.
\]
Since $\partial_\tau s
=
\langle\nabla_\Sigma s,\tau\rangle,$
this gives
\[
D^2w=
\begin{pmatrix}
s\,\mathrm{II} & \nabla_\Sigma s\\
(\nabla_\Sigma s)^T & \partial_\nu s
\end{pmatrix}.
\]

Next, from $A=s^{p-2}\nabla w$
we obtain
\[
DA
=
s^{p-2}D^2w
+
(p-2)s^{p-3}\nabla w\otimes\nabla s.
\]
Therefore
\[
M_p[w]
=
s^{p-2}D^2w
+
\frac{p-2}{2}s^{p-3}
\bigl(
\nabla w\otimes\nabla s
+
\nabla s\otimes\nabla w
\bigr).
\]
Using $\nabla w=s\nu$ and $\nabla s
=
\nabla_\Sigma s+(\partial_\nu s)\nu,$
the tangential--tangential block is \(s^{p-1}\mathrm{II}\), the
tangential--normal block is
\[
s^{p-2}
\left(
1+\frac{p-2}{2}
\right)\nabla_\Sigma s
=
\frac p2s^{p-2}\nabla_\Sigma s,
\]
and the normal--normal entry is $(p-1)s^{p-2}\partial_\nu s.$
Hence
\[
M_p[w]
=
s^{p-2}
\begin{pmatrix}
s\,\mathrm{II} & \dfrac p2\nabla_\Sigma s\\[2mm]
\dfrac p2(\nabla_\Sigma s)^T & (p-1)\partial_\nu s
\end{pmatrix},
\]
which completes the proof.
\end{proof}

The decomposition separates the second fundamental form of the level sets, the normal variation of \(s=|\nabla w|\), and its tangential variation \(\nabla_\Sigma s\). The latter is absent in one dimension and vanishes whenever \(s\) is constant along the level sets.

We next record the proof of Corollary~\ref{cor:rigid-flux-cases}.

\begin{proof}[\textbf{Proof of Corollary~\ref{cor:rigid-flux-cases}.}]
If \(p=2\), then \(A=\nabla w\), and hence $M_2[w]=D^2w.$
If \(N=1\), then $A=|w'|^{p-2}w',$
so
\[
M_p[w]=A'
=
(p-1)|w'|^{p-2}w''.
\]
Since \(p>1\) and \(w'\neq0\) at regular points, the factor
\((p-1)|w'|^{p-2}\) is positive, which proves
\[
M_p[w]\geq0
\quad\Longleftrightarrow\quad
w''\geq0.
\]

Finally, if \(\nabla_\Sigma s=0\), then
Proposition~\ref{prop:level-set-flux-decomposition} gives
\[
D^2w=
\begin{pmatrix}
s\,\mathrm{II} & 0\\
0 & \partial_\nu s
\end{pmatrix},
\qquad
M_p[w]
=
s^{p-2}
\begin{pmatrix}
s\,\mathrm{II} & 0\\
0 & (p-1)\partial_\nu s
\end{pmatrix}.
\]
Since \(s>0\) on \(\mathcal R_w\) and \(p-1>0\), the two block-diagonal
matrices have the same sign. Hence
\[
D^2w\geq0
\quad\Longleftrightarrow\quad
M_p[w]\geq0,
\]
which completes the proof.
\end{proof}

We now turn to the proof of Theorem~\ref{thm:p2-flux-rigidity}.

\begin{proof}[\textbf{Proof of Theorem~\ref{thm:p2-flux-rigidity}.}]
Fix a point \(x\) with \(\nabla w(x)\neq0\), and write, with respect to
\(T_x\Sigma_x\oplus\operatorname{span}\{\nu\}\),
\[
D^2w=
\begin{pmatrix}
B & b\\
b^T & c
\end{pmatrix},
\qquad
s^{2-p}M_p[w]
=
\begin{pmatrix}
B & \dfrac p2 b\\[2mm]
\dfrac p2 b^T & (p-1)c
\end{pmatrix},
\]
where $B:=s\,\mathrm{II},\,
b:=\nabla_\Sigma s,\,
c:=\partial_\nu s.$

Assume first that \(M_p[w](x)\geq0\). Since \(s>0\), the second matrix
above is positive semidefinite, and in particular \(c\geq0\). Hence, for
every \(\xi\in T_x\Sigma_x\) and \(t\in\mathbb R\),
\[
0
\leq
\begin{pmatrix}\xi\\[1mm] \dfrac{2t}{p}\end{pmatrix}^{\!T}
s^{2-p}M_p[w]
\begin{pmatrix}\xi\\[1mm] \dfrac{2t}{p}\end{pmatrix}
=
\langle B\xi,\xi\rangle
+2t\langle b,\xi\rangle
+\frac{4(p-1)}{p^2}ct^2.
\]
Since $\frac{4(p-1)}{p^2}\le 1$
and \(c\geq0\), it follows that
\[
\langle B\xi,\xi\rangle
+2t\langle b,\xi\rangle
+ct^2
\geq0.
\]
Thus \(D^2w(x)\geq0\). Therefore $M_p[w](x)\geq0\Longrightarrow
D^2w(x)\geq0$
for every \(p>1\).

If \(p=2\), then $M_2[w]=D^2w,$
so the two positivity conditions are equivalent. It remains to show that the converse fails for every \(p\neq2\). Set $\kappa_p
:=
\frac{p^2}{4(p-1)}>1.$
Choose \(b\in\mathbb R^{N-1}\setminus\{0\}\) and \(c\) such that $|b|^2<c<\kappa_p|b|^2,$
and define
\[
w(x',x_N)
=
x_N
+\frac12|x'|^2
+x_N\,b\cdot x'
+\frac c2x_N^2.
\]
Then
\[
\nabla w(0)=e_N,
\qquad
D^2w=
\begin{pmatrix}
I_{N-1} & b\\
b^T & c
\end{pmatrix}.
\]
Since \(c>|b|^2\), the Schur complement gives $D^2w>0$, see, for instance,
\cite[Chapter~1]{Zhang2005}.
At the origin, $s=1,\,
\mathrm{II}=I_{N-1},\,
\nabla_\Sigma s=b,\,
\partial_\nu s=c,$
and Proposition~\ref{prop:level-set-flux-decomposition} yields
\[
M_p[w](0)
=
\begin{pmatrix}
I_{N-1} & \dfrac p2 b\\[2mm]
\dfrac p2 b^T & (p-1)c
\end{pmatrix}.
\]
Its Schur complement with respect to \(I_{N-1}\) is
\[
(p-1)c-\frac{p^2}{4}|b|^2
=
(p-1)\bigl(c-\kappa_p|b|^2\bigr)<0.
\]
Since \(I_{N-1}\) is positive definite, \(M_p[w](0)\) has a negative
eigenvalue. Hence $D^2w(0)>0$
does not imply \(M_p[w](0)\geq0\) when \(p\neq2\). Therefore the two
positivity conditions are equivalent for every \(w\) and every point
with \(\nabla w\neq0\) if and only if \(p=2\).
\end{proof}

\smallskip


\noindent\textbf{Acknowledgements.}
The author would like to express their sincere gratitude to Professors Bobo Hua and Daniel Hauer for their valuable guidance, helpful suggestions, and continuous encouragement throughout the development of this work.

\medskip

\noindent\textbf{Conflict of interest.} The author declares that they have no conflict of interest.

\medskip

\noindent\textbf{Data availability.} No datasets were generated or analyzed during the current study.

\medskip

\noindent\textbf{AI assistance statement.}
The author used OpenAI's GPT-5.6 Sol model to assist with language polishing and to suggest ideas related to the analysis in Section~4. All mathematical arguments, proofs, and verifications were carried out independently by the authors, who take full responsibility for the content of the paper.

\bigskip

\small
	
	\noindent\textit{Rui Chen}: School of Mathematical Sciences, Fudan University,\\[1mm]
		Shanghai 200433,  China\\[2mm]
		Brandenburg University of Technology Cottbus--Senftenberg,\\[1mm]
		Cottbus 03046, Germany\\[1mm]
		\noindent\emph{Email:} \texttt{chenrui23@m.fudan.edu.cn}\\[3mm]

\vspace{1em}


\medskip

\begin{thebibliography}{99}

\bibitem{AghajaniMoslehTehrani2018}
A.~Aghajani and A.~Mosleh Tehrani,
Lower bounds for the principal eigenvalue of the $p$-Laplacian on the unit ball,
\emph{Numer. Funct. Anal. Optim.}
\textbf{39} (2018), no.~13, 1440--1465.

\bibitem{AllegrettoHuang1998}
W.~Allegretto and Y.~X. Huang,
A Picone's identity for the $p$-Laplacian and applications,
\emph{Nonlinear Anal.}
\textbf{32} (1998), no.~7, 819--830.

\bibitem{Anane1987}
A.~Anane,
Simplicit\'e et isolation de la premi\`ere valeur propre du $p$-laplacien avec poids,
\emph{C. R. Acad. Sci. Paris S\'er. I Math.}
\textbf{305} (1987), no.~16, 725--728.

\bibitem{AndrewsClutterbuck2011}
B.~Andrews and J.~Clutterbuck,
Proof of the fundamental gap conjecture,
\emph{J. Amer. Math. Soc.}
\textbf{24} (2011), no.~3, 899--916.


\bibitem{Benedikt2015}
J.~Benedikt,
Estimates of the principal eigenvalue of the $p$-Laplacian and the
$p$-biharmonic operator,
\emph{Math. Bohem.}
\textbf{140} (2015), no.~2, 215--222.

\bibitem{BenediktDrabek2012}
J.~Benedikt and P.~Dr\'abek,
Estimates of the principal eigenvalue of the $p$-Laplacian,
\emph{J. Math. Anal. Appl.}
\textbf{393} (2012), no.~1, 311--315.

\bibitem{Bhattacharya1988}
T.~Bhattacharya,
Radial symmetry of the first eigenfunction for the $p$-Laplacian in the ball,
\emph{Proc. Amer. Math. Soc.}
\textbf{104} (1988), no.~1, 169--174.

\bibitem{BorrelliMosconiSquassina2024}
W.~Borrelli, S.~Mosconi and M.~Squassina,
Concavity properties for solutions to $p$-Laplace equations with
concave nonlinearities,
\emph{Adv. Calc. Var.}
\textbf{17} (2024), no.~1, 79--97.

\bibitem{BrascampLieb1976}
H.~J. Brascamp and E.~H. Lieb,
On extensions of the Brunn--Minkowski and Pr\'ekopa--Leindler
theorems, including inequalities for log concave functions, and with
an application to the diffusion equation,
\emph{J. Funct. Anal.}
\textbf{22} (1976), no.~4, 366--389.

\bibitem{BryanClutterbuckRankin2026}
P.~Bryan, J.~Clutterbuck and C.~Rankin,
Convexity inequalities for eigenvalues and log-concavity of
eigenfunctions,
arXiv:2605.01334, 2026.

\bibitem{ChenHauer2026}
R.~Chen and D.~Hauer,
Fundamental gaps for the Dirichlet \(p\)-Laplacian with convex potentials:
sharp one-dimensional bounds and a higher-dimensional dichotomy,
\emph{arXiv preprint arXiv:2608.13443} (2026).

\bibitem{CaffarelliFriedman1985}
L.~A. Caffarelli and A.~Friedman,
Convexity of solutions of semilinear elliptic equations,
\emph{Duke Math. J.}
\textbf{52} (1985), no.~2, 431--456.

\bibitem{CaffarelliSpruck1982}
L.~A. Caffarelli and J.~Spruck,
Convexity properties of solutions to some classical variational
problems,
\emph{Comm. Partial Differential Equations}
\textbf{7} (1982), no.~11, 1337--1379.

\bibitem{ColesantiQinSalani2026}
A.~Colesanti, L.~Qin and P.~Salani,
Geometric properties of solutions to elliptic PDE's in Gauss space
and related Brunn--Minkowski type inequalities,
\emph{Adv. Math.}
\textbf{489} (2026), 110827.

\bibitem{CrastaFragala2020}
G.~Crasta and I.~Fragal\`a,
The Brunn--Minkowski inequality for the principal eigenvalue of fully
nonlinear homogeneous elliptic operators,
\emph{Adv. Math.}
\textbf{359} (2020), 106855.

\bibitem{EdmundsGurkaLang2012}
D.~E. Edmunds, P.~Gurka and J.~Lang,
Properties of generalized trigonometric functions,
\emph{J. Approx. Theory}
\textbf{164} (2012), no.~1, 47--56.


\bibitem{GilbargTrudinger}
D.~Gilbarg and N.~S. Trudinger,
\emph{Elliptic Partial Differential Equations of Second Order},
Classics in Mathematics,
Springer-Verlag, Berlin, 2001.



\bibitem{Kajikiya2015}
R.~Kajikiya,
A priori estimate for the first eigenvalue of the $p$-Laplacian,
\emph{Differential Integral Equations}
\textbf{28} (2015), no.~9--10, 1011--1028.

\bibitem{KajikiyaTakeuchi2025}
R.~Kajikiya and S.~Takeuchi,
Estimates for the first eigenvalue of the one-dimensional $p$-Laplacian,
\emph{Appl. Anal.}
\textbf{104} (2025), no.~15, 3041--3053.

\bibitem{KawohlNovaga2008}
B.~Kawohl and M.~Novaga,
The $p$-Laplace eigenvalue problem as $p$ approaches $1$ and
Cheeger sets in a Finsler metric,
\emph{J. Convex Anal.}
\textbf{15} (2008), no.~3, 623--634.

\bibitem{Korevaar1983}
N.~J. Korevaar,
Convex solutions to nonlinear elliptic and parabolic boundary value
problems,
\emph{Indiana Univ. Math. J.}
\textbf{32} (1983), no.~4, 603--614.

\bibitem{LangEdmunds2011}
J.~Lang and D.~E. Edmunds,
\emph{Eigenvalues, Embeddings and Generalised Trigonometric Functions},
Lecture Notes in Mathematics, vol.~2016,
Springer-Verlag, Berlin and Heidelberg, 2011.

\bibitem{Lieberman1988}
G.~M. Lieberman,
Boundary regularity for solutions of degenerate elliptic equations,
\emph{Nonlinear Anal.}
\textbf{12} (1988), no.~11, 1203--1219.

\bibitem{Lindqvist1990}
P.~Lindqvist,
On the equation
$\operatorname{div}(|\nabla u|^{p-2}\nabla u)
+\lambda|u|^{p-2}u=0$,
\emph{Proc. Amer. Math. Soc.}
\textbf{109} (1990), no.~1, 157--164.

\bibitem{Lindqvist1995}
P.~Lindqvist,
Some remarkable sine and cosine functions,
\emph{Ricerche Mat.}
\textbf{44} (1995), no.~2, 269--290.

\bibitem{Lorch1993}
L.~Lorch,
Some inequalities for the first positive zeros of Bessel functions,
\emph{SIAM J. Math. Anal.}
\textbf{24} (1993), no.~3, 814--823.

\bibitem{Ni2013}
L.~Ni,
Estimates on the modulus of expansion for vector fields solving
nonlinear equations,
\emph{J. Math. Pures Appl. (9)}
\textbf{99} (2013), no.~1, 1--16.

\bibitem{otani-teshima}
M.~Otani and T.~Teshima,
On the first eigenvalue of some quasilinear elliptic equations,
\emph{Proc. Japan Acad. Ser. A Math. Sci.}
\textbf{64} (1988), no.~1, 8--10.

\bibitem{Qin2024}
L.~Qin,
Log-concavity of eigenfunction and Brunn--Minkowski inequality of
eigenvalue for weighted $p$-Laplace operator,
arXiv:2411.16377, 2024.

\bibitem{Sakaguchi1987}
S.~Sakaguchi,
Concavity properties of solutions to some degenerate quasilinear
elliptic Dirichlet problems,
\emph{Ann. Scuola Norm. Sup. Pisa Cl. Sci. (4)}
\textbf{14} (1987), no.~3, 403--421.

\bibitem{Schneider2014}
R.~Schneider,
\emph{Convex Bodies: The Brunn--Minkowski Theory},
2nd expanded ed.,
Encyclopedia of Mathematics and its Applications, vol.~151,
Cambridge University Press, Cambridge, 2014.


\bibitem{Tolksdorf1984}
P.~Tolksdorf,
Regularity for a more general class of quasilinear elliptic equations,
\emph{J. Differential Equations}
\textbf{51} (1984), no.~1, 126--150.

\bibitem{Vazquez1984}
J.~L. V\'azquez,
A strong maximum principle for some quasilinear elliptic equations,
\emph{Appl. Math. Optim.}
\textbf{12} (1984), no.~3, 191--202.

\bibitem{Zhang2005}
F.~Zhang, ed.,
\emph{The Schur Complement and Its Applications},
Numerical Methods and Algorithms, vol.~4,
Springer, New York, 2005.

\end{thebibliography}
 \end{document}